\def\disp{\displaystyle}
\def\Limsup{\mathop{{\rm Lim}\,{\rm sup}}}
\def\tto{\;{\lower 1pt \hbox{$\rightarrow$}}\kern -10pt
\hbox{\raise 2pt \hbox{$\rightarrow$}}\;}
\def\Hat{\widehat}
\def\Bar{\overline}
\def\ra{\rangle}
\def\la{\langle}
\def\ve{\varepsilon}
\def\B{\mathbb{B}}
\def\R{\mathbb{R}}
\def\N{\mathbb{N}}
\def\ox{\bar{x}}
\def\oy{\bar{y}}
\def\ooy{\bar{\y}}
\def\oz{\bar{z}}
\def\ow{\bar{w}}
\def\gph{\operatorname{gph}}
\def\epi{\mbox{\rm epi}\,}
\def\dom{\mbox{\rm dom}\,}
\def\ker{\mbox{\rm ker}\,}
\def\proj{\mbox{\rm proj}\,}
\def\lip{\mbox{\rm lip}\,}
\def\reg{\mbox{\rm reg}\,}
\def\dist{\mbox{\rm dist}\,}
\def\substack#1#2{{\scriptstyle{#1}\atop\scriptstyle{#2}}}
\def\ph{\varphi}
\def\emp{\emptyset}
\def\st{\stackrel}
\def\oR{\Bar{\R}}
\def\N{\mathbb{N}}
\def\gg{\gamma}
\def\dd{\delta}
\def\N{{\rm I\!N}}
\newcounter{count}
\newcommand{\Intf}[1]{\mathrm{E}_{#1}}
\newcommand{\Intfset}[1]{\mathrm{E}_{#1}}
\DeclareMathOperator*{\esssup}{ess\,sup}
\newcommand{\T}{T}
\newcommand{\Rex}{\overline{\mathbb{R}}}
\let\epsilon\varepsilon
\DeclareMathAlphabet{\mathpzc}{OT1}{pzc}{m}{it}
\def\v{\mathpzc{v}} 
\def\w{\mathpzc{w}}
\def\x{\mathpzc{x}}
\def\y{\mathpzc{y}}
\def\z{\mathpzc{z}}
\def\X{{\R^n} }
\def\Y{\R^{m} }
\def\Z{\R^{q}} 
\def\Leb{\textnormal{L} }
\def\dmu{\mu(dt)}
\def\disp{\displaystyle}
\begin{document}

\title{Sensitivity Analysis of Stochastic Constraint and Variational Systems via Generalized Differentiation
\thanks{Research of the first author was partly supported by the USA National Science Foundation under grant DMS-1808978, by the Australian Research Council under Discovery Project DP-190100555, and by the Project 111 of China under grant D21024. Research of the second author was partly supported by the Chilean grants: Fondecyt Regular 1190110 and Fondecyt Regular 1200283.}}
\author{Boris S. Mordukhovich \and \mbox{Pedro P\'erez-Aros}}

\institute{Boris S. Mordukhovich \at Department of Mathematics, Wayne State University, Detroit, Michigan 48202, USA\\ \email{aa1086@wayne.edu} \\
\and Pedro P\'erez-Aros \at Instituto de Ciencias de la Ingenier\'ia, Universidad de O'Higgins, Rancagua, Chile\\
\email{pedro.perez@uoh.com}}

\date{Received: date / Accepted: date}

\maketitle

\begin{abstract}
This paper conducts sensitivity analysis of random constraint and variational systems related to stochastic optimization and variational inequalities. We establish efficient conditions for well-posedness, in the sense of robust Lipschitzian stability and/or metric regularity, of such systems by employing and developing  coderivative characterizations of well-posedness properties for random multifunctions and efficiently evaluating coderivatives of special classes of random integral set-valued mappings that naturally emerge in stochastic programming and stochastic variational inequalities.\vspace*{-0.05in}

\keywords{Variational analysis \and Set-valued analysis \and Lipschitzian stability \and Generalized differentiation \and Coderivatives \and Stochastic programming \and Stochastic variational inequalities}\vspace*{-0.05in}

\subclass{Primary: 49J53, 90C15, 90C34 \and Secondary: 49J52}\vspace*{-0.05in}
\end{abstract}\vspace*{-0.15in}

\section{Introduction}\label{intro}\vspace*{-0.1in}\setcounter{equation}{0}
Sensitivity analysis has drawn a strong attention of many researchers and users in the areas of optimization, variational analysis, and related disciplines who are interested in understanding the behavior of feasible and optimal solutions
under perturbations of the initial data. Such perturbations should be taken into account due to ``always present" errors in the given data. The literature on sensitivity analysis for various classes of optimization-related problems in deterministic frameworks is enormous; see, e.g., \cite{bs,m06} and the bibliographies therein.  To the best of our knowledge, much less has been done in this direction for stochastic problems; we refer the reader to \cite{sdr} for a very recent account.

Variational analysis offers natural approaches to the study of sensitivity of parametric sets of feasible and optimal solutions to optimization-related and equilibrium problems with respect to parameter perturbations. Among such approaches, we emphasize those based on {\em generalized differentiation} of usually {\em set-valued} parameter-dependent solution maps. Efficient results of this type for {\em robust Lipschitzian stability} of solution maps associated with {\em deterministic} constraint and variational systems were developed in \cite{m06,m18} based on the {\em coderivative} concept for set-valued mappings (multifunctions) introduced in \cite{m80}. This approach is based on the {\em complete characterization} of robust Lipschitzian stability of general closed-graph multifunctions (and equivalent properties of {\em metric regularity} and {\em linear openness} of inverse mappings) obtained in \cite{m93} and known as the {\em Mordukhovich criterion} \cite{rw}. Due to the well-developed calculus rules and computations of coderivatives, this criterion has been broadly applied to the sensitivity analysis of various deterministic constraint and variational systems; see \cite{m06,m18} and the references therein.

The situation is much more involved for {\em random multifunctions} in all the aspects: coderivative calculus, a variety of Lipschitzian properties, and their coderivative characterizations. Various results in this directions for general classes of {\em expected-integral} mappings have been recently obtained in our papers \cite{mp2,mp3}. For the reader's convenience, the major results obtained therein, which are needed in what follows, will be briefly reviewed in the next section. 

The {\em main goal} of this paper is to elaborate and further develop the aforementioned results for general random multifunctions in order to apply them to the sensitivity analysis of {\em structured random constraint} and {\em variational systems} that appear in various models of {\em stochastic optimization} and {\em stochastic variational inequalities} with applications to {\em well-posedness} of such systems in the sense of their robust Lipschitzian stability and/or metric regularity. 

The rest of the paper is organized as follows. In Section~\ref{SECTION:2} we {\em overview} the major constructions and results of variational analysis, generalized differentiation, random measurable multifunctions and their integration, which are largely used in the formulation and derivation of the main results of the paper. 

Section~\ref{Section:Lipschitz-Likeproperty} is devoted to various {\em Lipschitzian} properties of {\em random normal integrands} and the corresponding {\em expected integral multifunctions}. Using coderivatives, we establish efficient conditions ensuring these properties, which are significant for the subsequent material.

In Section~\ref{Sensitivity_analysis_parametric_constraint} we conduct a {\em coderivative-based sensitivity analysis} for a general class of {\em stochastic constraint systems} as well as for their various specifications. Our approach is based on coderivative evaluations (upper estimates) for such parametric systems in terms of the initial data with further applications of coderivative conditions ensuring their {\em well-posedness properties} such as {\em Lipschitzian stability} and {\em metric regularity}.

Section~\ref{Sensitivity_variational} concerns {\em stochastic variational systems}, which are described by {\em stochastic generalized equations} involving expected integral multifunctions and encompass, in particular, {\em stochastic variational inequalities}. We first evaluate coderivatives of solution maps to such systems in terms of their initial data and then use these calculations of derive efficient conditions for their well-posedness properties based on coderivative characterizations. 

Section~\ref{Sen_constraint_integrable_LipsLike} provides a coderivative-based sensitivity analysis of solution maps to stochastic constraint and variational systems in the case where the random integrand enjoys the {\em integrable Lipschitz-like} property being also single-valued at the reference point. Finally, in Section~\ref{Appli_Stochastic} we apply the obtained results to explicit coderivative evaluations for {\em stationary point maps} in stochastic programming and to the derivation of necessary optimality conditions in {\em stochastic mathematical programs with equilibrium constraints}.
\vspace*{-0.25in}
  
\section{Preliminaries from variational analysis and set-valued integration}\label{SECTION:2}\vspace*{-0.1in}\setcounter{equation}{0}

In this section we present some background from variational and set-valued analysis needed in what follows; see the books \cite{cv,m06,rw} for more details and references. Throughout the paper we use the standard notation, which can be found in these books. Recall that $\N:=\{1,2,\ldots\}$ and that the extended real line is denoted by $\Rex:=[-\infty,+\infty]$ with the usual convention of $(+\infty)+(-\infty): =+\infty$ and $0\cdot (\pm \infty):=0$. For $x\in \X$ and $r>0$, the closed ball of radius $r$ centered at $x$ is denoted by $\mathbb{B}_r(x)$, while the unit closed ball is written as $\mathbb{B}$. The symbol $^\top$ stands for the vector and matrix transposition.\vspace*{-0.25in}

\subsection{\bf Variational analysis and generalized differentiation}\label{subsec:va}\vspace*{-0.1in}

Here we recall some major notions of generalized differentiation for sets, set-valued mappings/multifunctions, and extended-real-valued functions that are broadly employed in the paper.

Given $C\subseteq\X$, the (Fr\'echet) {\em regular normal cone} to $x\in C$ is defined by
\begin{equation}\label{rnc}
\Hat{N}(x;C):=\Big\{ x^*\in \X\;\Big|\;\limsup\limits_{ u \overset{C}{\to} x  }  \Big\langle x^\ast ,\frac{u - x}{\| u - x  \| }\Big\rangle\leq 0\Big\},
\end{equation}
where the symbol `$y\overset{C}{\to} x$' means that $y \to x$ with $y \in C$. We put $\Hat N(x;C):=\emp$ if $x\notin C$. The (Mordukhovich) {\em limiting normal cone} to $C$ at $x\in\R^n$ is 
\begin{equation}\label{lnc}
N(x;C):=\Limsup\limits_{u \to x}\Hat{N}(u;C),
\end{equation}  
where for any multifunction $F: \X \tto \Y$ the symbol `$\Limsup$' stands for the (Painlev\'e-Kuratowski) {\em outer limit} of $F$ at $x$ defined by
\begin{equation}\label{pk}
\Limsup_{u\to x}F(u):=\big\{y\in\Y\big|\;\exists\,u_k\to x,\,y_k\to y,\;y_k\in F(u_k)\;\mbox{as}\;k\in\N\big\}.
\end{equation}
The set $C$ is called {\em normally regular} at $x\in C$ if $\Hat N(x;C)=N(x;C)$.

Given a multifunction $F: \X \tto \Y$, denote its {\em graph} and {\em kernel} by
\begin{equation*}
\gph F:=\big\{ (v,w) \in \X\times \Y\;\big|\; w\in F(x)\big\}\;\mbox{ and }\;\ker F:=\big\{ x \in \X\;\big|\;0\in F(x)\big\}.
\end{equation*} 
The {\em regular coderivative} and {\em limiting coderivative} of $F$ at $(x,y)\in\gph F$ are defined  for all $y^*\in\Y$ via the corresponding normal cone \eqref{rnc} and \eqref{lnc} to the graph of $F$ by, respectively,
\begin{equation}\label{rcod}
\Hat{D}^\ast F(x,y) (y^\ast):=\big\{  x^\ast \in \X\;\big|\;(x^\ast,-y^\ast) \in  \Hat{N}\big( (x,y); \gph F\big)\big\},
\end{equation}
\begin{equation}\label{lcod}
{D}^\ast F(x,y) (y^\ast):=\big\{x^\ast \in \X\;\big|\;(x^\ast,-y^\ast) \in  N\big( (x,y); \gph F\big)\big\},
\end{equation}
where $y$ is dropped if $F$ is a singleton at $x$, i.e., $F(x)=\{y\}$. From \eqref{lnc} we have the limiting representation
\begin{align}\label{Lim_repr_cod}
{D}^\ast F(x,y) (y^\ast)= \Limsup\limits_{ 	\substack{   (v,w) \overset{  \gph F }{\longrightarrow } (x,y)}{ w^\ast \to y^\ast }	}  \Hat{D}^\ast F(v,w) (w^\ast),\quad y^*\in\Y.
\end{align}

Given now an extended-real-valued function $f:\X\to\Rex$, assume in what follows that it is {\em proper}, i.e., $f(x)>-\infty$ for all $x\in \X$ and its   domain $\dom f := \{ x\in X\;\big|\;f(x) < +\infty \}$ is nonempty. Considering the epigraph $\epi f:=\{ (x,\alpha) \in \X\times \R \;|\;f(x)\leq \alpha\}$ of $f$, we define its (limiting) {\em subdifferential} at $x\in\dom f$ geometrically by 
\begin{equation}\label{sub}
\partial f(x) := \big\{ x^\ast\in \mathbb{R}^n\;\big|\;(x^\ast ,-1) \in N\big((x,f(x));\epi f\big)\big\},
\end{equation}
while referring the reader to \cite{m06,m18,rw} for various limiting analytic representations of \eqref{sub} as well as comprehensive {\em calculus rules} for this subdifferential and the associated limiting constructions \eqref{lnc} and \eqref{lcod}. Observe that if a mapping $F\colon\X\to\Y$ is single-valued and locally Lipschitzian around $x$, then we have the coderivative {\em scalarization formula} held for all $y^*\in\Y$:
\begin{equation}\label{scal}
{D}^\ast F(x) (y^\ast) =\partial \langle y^\ast, F \rangle (x)\text{ with } \langle y^\ast, F \rangle (\cdot):= \langle y^\ast, F (\cdot)\rangle.
\end{equation}

Along with the (first-order) subdifferential \eqref{sub}, consider the {\em second-order subdifferential} (or {\em generalized Hessian}) of $f: \X \to \Rex$ at $x\in \dom f$ relative to $x^\ast\in\partial f(x)$ introduced in \cite{m92} by 
\begin{equation}\label{2nd}
\partial^2f(x,x^\ast) (v^\ast) = \left( D^\ast\partial f\right) (x,x^\ast)(v^\ast),\quad v^\ast\in \X.
\end{equation}
Dealing with extended-real-valued functions of two variables $f(x,z):\X\times \Z \to \Rex$, we use the {\em partial second-order subdifferential} of $f$ with respect to $x$ at $(\ox,\oz)$ relative to $\oy\in\partial_x f(\ox,\oz)$ defined by
\begin{equation}\label{2par}
\partial_x^2 f(\ox,\ow,\oy)(u^\ast):= \left( D^\ast \partial_z f\right)(\ox,\ow,\oy)(u^\ast),
\end{equation}
where $\partial_x f(x,z):=\partial f_z(x)$ with $f_z:=f(\cdot,z)$; see \cite{mr}. Note that both second-order constructions \eqref{2nd} and \eqref{2par} enjoy well-developed calculus rules and are efficiently computed for broad classes of functions encountered in variational analysis and optimization; see \cite{m06,m18,mr} and the references therein.\vspace*{0.03in}

Next we recall the two interrelated {\em well-posedness} properties of multifunctions, which play a fundamental role in many aspects of variational analysis, optimization, and applications. They both are studied and largely used in the paper. We say that $F\colon\X\tto\Y$ is {\em Lipschitz-like} around $(x,y)\in\gph F$ (or satisfies the {\em Aubin pseudo-Lipschitz property} around this point) if 
there exist a constant $\ell\ge 0$ and neighborhoods $U$ of $x$ and $V$ of $y$ such that 
\begin{equation}\label{lip}
F(u) \cap V \subseteq F(v) + \ell \| u-v\| \mathbb{B}\;\text{  for all }\;u,v \in U.
\end{equation}
A complete characterization of the Lipschitz-like property of closed-graph multifunctions $F$ around $(x,y)$ and the precise calculations of the exact bound $\lip F(x,y)$, i.e., the infimum of moduli $\ell$ over neighborhood $U$ and $V$ in \eqref{lip}, are given by the {\em Mordukhovich coderivative criterion}  
\begin{equation}\label{cod-cr}
D^*F(x,y)(0)=\{0\}\;\mbox{ with }\;\lip F(x,y)=\|D^*F(x,y)\|
\end{equation}
in terms of the limiting coderivative \eqref{lcod} and its norm as a positive homogeneous mapping of $y^*$. Another well-posedness property used in this paper is the {\em metric regularity} of $F$ around $(x,y)\in\gph F$ defined as
\begin{equation}\label{metreg}
\dist\big(u; F^{-1}(v)\big)\le\kappa\,\dist\big(v;F(u)\big)\;\text{ for all }\;u\in U\;\text{ and }\;v\in V
\end{equation}
for some constant $\kappa\ge 0$ and neighborhoods $U$ of $x$ and $V$ of $y$ with $\reg F(x,y)$ standing for the exact bound (supremum) of $\kappa$ over $U$ and $V$ in \eqref{metreg}. As well known in variational analysis, the Lipschitz-like property of $F$ around $(x,y)$ is {\em equivalent} to the metric regularity of the inverse mapping $F^{-1}$ around $(y,x)$ with the exact bound relationship $\lip F(x,y)\cdot\reg F^{-1}(y,x)=1$. Thus the coderivative criterion \eqref{cod-cr} yields the {\em metric regularity characterization}:
\begin{equation}\label{mr}
\ker D^*F(x,y)=\{0\}\;\mbox{ with }\;\reg F(x,y)=\|D^*F(x,y)\|^{-1}.
\end{equation}
Since the coderivatives in \eqref{cod-cr} and \eqref{mr} are {\em robust} and possess {\em full calculus}, both these characterizations are broadly used in variational analysis, optimization, and their applications; see \cite{m06,m18,rw} for more details and references.\vspace*{-0.2in}

\subsection{\bf Measurable multifunctions and expected-integral mappings}\label{sec:expected}\vspace*{-0.1in}

In the second part of this section, we review some required classical notions and results on measurable multifunctions and their selections, as well as more recent ones dealing with expected-integral functionals and set-valued mappings. 
Throughout the paper, $({T},\mathcal{A},\mu)$ is a complete finite measure space. To avoid confusion, we use special font (e.g., $\v, \w, \x,\y, \z$, etc.) to designate functions defined on $T$. For any $p\in[1,+\infty]$, denote by $\Leb^p({\T},\R^n)$ the sets of all (equivalence classes by the relation `equal almost everywhere') measurable functions $\x$ such that $\| \x(\cdot)\|^p$ is integrable for $p\in[1,+\infty)$ and measurable essentially bounded functions for $p=\infty$. The norm in $\Leb^p(T, \mathbb{R}^n)$ is denoted by $\|\cdot\|_p$. We identify points in $\X$ with constant functions in $\Leb^p(T,\X)$ and so have
\begin{align*}
\| x- \x \|_p: &= \left( \int_T \| x - \x(t)\|^p \dmu \right)^{1/p}\text{as  }\; p \in [1,+\infty),\\
\| x- \x \|_\infty: & = \esssup\limits_{t\in T } \| x- \x(t)\|\;\mbox{ for $x\in \X$ and $\x \in \Leb^p(T,\X)$}.
\end{align*} 

Recall that a set-valued mapping $M:T\tto\X$ is 
\emph{measurable} if for every open set $U \subseteq \mathbb{R}^n$ the set 
$M^{-1}(U):=\{ t \in T\;|\; M(t)\cap U \neq \emptyset\}$ is measurable, i.e., $M^{-1}(U)\in\mathcal{A}$. The mapping $M$ is \emph{graph measurable} if $\gph M \in \mathcal{A}\otimes\mathcal{B}(\mathbb{R}^n)$, where $\mathcal{B}(\mathbb{R}^n)$ is the Borel $\sigma$-algebra, i.e., the $\sigma$-algebra generated by all open sets of $\mathbb{R}^n$. Since the space $(T,\mathcal{A},\mu)$ is complete, any multifunction $M$ with closed values is measurable if and only if $\gph M \in \mathcal{A}\otimes \mathcal{B}(\X)$.

For a multifunction $M:{\T}\tto \R^n$ (not necessarily measurable) and a measurable set $A \in \mathcal{A}$, define the \emph{the Aumann integral} of $M$ over $A$ by  
$$
\int\limits_A M(t) \dmu  :=\Bigg\{ \int\limits_A \x^*(t) \dmu\;\Bigg|\;x^* \in {\Leb}^1({\T},\X) \textnormal{ and } \x^*(t) \in M(t) \text{ a.e.}\Bigg\}. 
$$
An extended-real-valued function  $f:T\times \mathbb{R}^n \to \Rex$ is called  a \emph{normal integrand} if the multifunction $t\mapsto\epi f_t$ is measurable with closed values. By \cite[Corollary~14.34]{rw}, this amounts to saying that $f$ is $\mathcal{A}\otimes\mathcal{B}(\mathbb{R}^n)$-measurable and that for every $t\in {T}$ the function $f_t:=f(t,\cdot)$ is lower semicontinuous function (l.s.c.). In addition, we say that $f$ is {\em proper} if $f_t$ is proper for all $t\in T$. If $f_t$ is convex for all $t \in T$, then $f$ is known as a {\em convex normal integrand}; see \cite{cv,rw}.

We say \cite{mp2} that $\Phi: T \times  \X \tto  \Y$  is a {\em set-valued normal integrand }, or a {\em random multifunction}, if for all $t \in T$ the mapping $\Phi_t:=\Phi(t,\cdot)$ is of closed graph, and the graph of $\Phi $ belongs to $\mathcal{A}\otimes\mathcal{B}(\X\times \Y)$. When $\Phi$ is single-valued, it is called a {\em vector-valued normal integrand}. Since the measure space is complete, we have that the definition of a set-valued normal integrand is equivalent to requiring that $t\mapsto\gph \Phi_t$ is a measurable multifunction with closed values. It is natural to say that $\Phi: T \times  \X \tto  \Y$ is {\em locally single-valued} around $\ox \in \X$ if there exists $\eta >0 $ and $\Hat T \in \mathcal{A}$ with $\mu(T\backslash \Hat T)$ such that
\begin{equation*}
\Phi_t(x) \text{ is single-valued for all } x\in \mathbb{B}_\eta (\ox) \text{ and all }  t\in  \Hat T.
\end{equation*}
The term that $\Phi$ is {\em continuously differentiable} around $\ox$ is defined similarly.\vspace*{0.03in}

We next result is taken from \cite[Proposition~3.2]{mp3}.\vspace*{-0.05in}
\begin{lemma}\label{lemma_measurability_reg_sub}
Let $f: T\times \X \to \Rex$ be a  normal integrand, and  let $\Phi: T \times  \X \tto  \Y$   be a set-valued normal integrand with respect to the measure space $({T},\mathcal{A},\mu)$. Then the following multifunctions are graph measurable:
\begin{enumerate}[label=\alph*)]
\item[{\bf(a)}] $t\mapsto\gph  \partial f_t =\big\{ (x,x^\ast) \in \mathbb{R}^{2n}\;\big|\;x^\ast \in \partial f_t (x)\big\}$, 
\item[{\bf(b)}]  $t\mapsto\gph \Hat D^\ast \Phi_t=\big\{ (x,y, x^\ast, y^\ast) \in \mathbb{R}^{2(n+m)}\;\big|\;x^\ast \in  \Hat D^\ast \Phi_t (x,y)(y^\ast)\big\} $, 
\item[{\bf(c)}] $t\mapsto\gph D^\ast  \Phi_t = \big\{ (x,y, x^\ast, y^\ast) \in \mathbb{R}^{2(n+m)}\;\big|\;x^\ast \in D^\ast \Phi_t (x,y)(y^\ast)\big\} $.
\end{enumerate}
\end{lemma}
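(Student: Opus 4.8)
The plan is to reduce all three assertions to a single measurability statement about normal cones and then treat the regular and limiting cones separately. Concretely, I would first establish the following key fact: whenever $t\mapsto C_t$ is a measurable multifunction with closed values in some $\R^d$, both graph-multifunctions $t\mapsto\{(x,x^\ast)\mid x^\ast\in\Hat N(x;C_t)\}$ and $t\mapsto\{(x,x^\ast)\mid x^\ast\in N(x;C_t)\}$ are graph measurable. Granting this, part (b) follows by taking $C_t:=\gph\Phi_t$, which is measurable with closed values since $\Phi$ is a set-valued normal integrand; then $\gph\Hat D^\ast\Phi_t$ is obtained from $\gph\Hat N(\cdot;\gph\Phi_t)$ by the sign change $y^\ast\mapsto-y^\ast$ in the dual $\Y$-variable, a linear homeomorphism of $\R^{2(n+m)}$ that preserves measurability, and part (c) is the same with $N$ in place of $\Hat N$. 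For part (a) I would take $C_t:=\epi f_t$ and recover the graph of $t\mapsto\gph\partial f_t$ as the preimage of the graph of $t\mapsto\gph N(\cdot;\epi f_t)$ under the map $(t,x,x^\ast)\mapsto(t,(x,f_t(x)),(x^\ast,-1))$; this map is $\mathcal A\otimes\mathcal B(\R^{2n})$-to-$\mathcal A\otimes\mathcal B(\R^{2(n+1)})$ measurable because $f$, being a normal integrand, is jointly measurable in $(t,x)$, so the preimage of the measurable normal-cone graph is measurable.

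The main work lies in the regular normal cone. I would fix a Castaing representation $\{c_n(\cdot)\}_{n\in\N}$ of $t\mapsto C_t$, i.e. measurable selections with $C_t=\cl\{c_n(t)\mid n\in\N\}$ for every $t$. Unraveling \eqref{rnc}, one has $x^\ast\in\Hat N(x;C_t)$ precisely when $x\in C_t$ and, for every $k\in\N$, there is $j\in\N$ with $\la x^\ast,u-x\ra\le\tfrac1k\|u-x\|$ for all $u\in C_t$ satisfying $\|u-x\|<\tfrac1j$. Using density of $\{c_n(t)\}$ in $C_t$ together with continuity of $u\mapsto\la x^\ast,u-x\ra-\tfrac1k\|u-x\|$, this quantifier over $u\in C_t$ may be replaced by the countable quantifier over the selections $c_n(t)$. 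Consequently the graph of $t\mapsto\gph\Hat N(\cdot;C_t)$ equals
\[
\{(t,x,x^\ast)\mid x\in C_t\}\cap\bigcap_{k\in\N}\bigcup_{j\in\N}\bigcap_{n\in\N}A_{k,j,n},
\]
where each set $A_{k,j,n}:=\{(t,x,x^\ast)\mid\|c_n(t)-x\|\ge\tfrac1j\ \text{or}\ \la x^\ast,c_n(t)-x\ra\le\tfrac1k\|c_n(t)-x\|\}$ lies in $\mathcal A\otimes\mathcal B(\R^{2d})$ by measurability of $c_n$ and joint continuity of the remaining expressions. Since $\{(t,x,x^\ast)\mid x\in C_t\}$ is measurable (the measure space is complete and $C$ is closed-valued measurable) and the measurable sets are stable under countable unions and intersections, graph measurability of the regular normal cone follows.

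For the limiting normal cone I would exploit the outer-limit structure \eqref{lnc}. Because $\Hat N(u;C_t)=\emp$ for $u\notin C_t$, the definition \eqref{pk} of the Painlev\'e--Kuratowski outer limit yields the fiberwise identity $\{(x,x^\ast)\mid x^\ast\in N(x;C_t)\}=\cl\bigl(\gph\Hat N(\cdot;C_t)\bigr)$, the closure being taken in $\R^{2d}$. Since measurability of a multifunction coincides with that of its fiberwise closure, the graph measurability of $t\mapsto\gph\Hat N(\cdot;C_t)$ obtained above transfers to $t\mapsto\gph N(\cdot;C_t)$, which completes the key fact and hence the lemma.

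I expect the delicate point to be the replacement of the quantifier over all $u\in C_t$ by the countable quantifier over the Castaing selections $c_n(t)$: one must argue that checking the approximate-normality inequality on a dense subset suffices, handling the limiting case $\|u-x\|=\tfrac1j$ through the nested quantifiers over $k$ and $j$. The remaining ingredients—existence of a Castaing representation, stability of measurability under countable Boolean operations and under closure, and the joint measurability of $f$—are standard and require no new estimates.
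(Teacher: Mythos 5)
Your proposal is correct in substance, but it takes a genuinely different route from the paper. The paper does not prove this lemma at all: it imports it verbatim from \cite[Proposition~3.2]{mp3}, where the argument rests on the measurability theory of Rockafellar--Wets \cite[Chapter~14]{rw} --- essentially, assertion (a) is read off from the known measurability of subgradient mappings of normal integrands (\cite[Theorem~14.56]{rw}), and (b), (c) are then deduced by viewing $\Hat N(\cdot;\gph\Phi_t)$ and $N(\cdot;\gph\Phi_t)$ as regular and limiting subgradients of the indicator normal integrand $\delta(\cdot;\gph\Phi_t)$, followed by the sign flip $y^\ast\mapsto-y^\ast$. You run the reduction in the opposite direction: you prove the normal-cone measurability from scratch via a Castaing representation and then derive (a) by substituting into the epigraphical normal cone. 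This is more self-contained (you are in effect re-proving the relevant special case of the Rockafellar--Wets machinery, whose own proof also uses Castaing representations and countable approximation), and your quantifier bookkeeping for the regular cone is sound: the strict inequality $\|u-x\|<1/j$ in the target combined with the non-strict inequality $\la x^\ast,u-x\ra\le\frac1k\|u-x\|$ in the sets $A_{k,j,n}$ is exactly what makes the passage from the dense selections $c_n(t)$ to all $u\in C_t$ work by continuity, and it handles isolated points and the boundary case $\|u-x\|=1/j$ correctly.

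Three places where your write-up leans on unstated (but standard, and in this setting valid) facts deserve to be made explicit. First, and most substantively, your closure-transfer step for the limiting cone: the identity $\gph N(\cdot;C_t)=\cl\big(\gph\Hat N(\cdot;C_t)\big)$ is indeed just \eqref{lnc} restated, but the multifunction $t\mapsto\gph\Hat N(\cdot;C_t)$ has non-closed values, and for such a map graph measurability does \emph{not} formally coincide with measurability in the open-preimage sense; to pass from the former to the latter (after which taking fiberwise closures is harmless, as you say) you need the projection theorem on the complete finite measure space $(T,\mathcal{A},\mu)$: for open $U$, $\{t\mid\gph\Hat N(\cdot;C_t)\cap U\ne\emp\}$ is the $T$-projection of a product-measurable set, hence measurable by completeness. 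As stated, your one-line justification conflates the two notions of measurability, and completeness of the measure space is used precisely here (and nowhere essentially else). Second, Castaing representations exist only for nonempty-valued measurable multifunctions, so you should restrict to the measurable set $\dom C=\{t\mid C_t\ne\emp\}$ and extend the selections arbitrarily off it; this is harmless since your formula intersects with $\{(t,x)\mid x\in C_t\}$. Third, in part (a) the substitution map $(t,x,x^\ast)\mapsto(t,(x,f_t(x)),(x^\ast,-1))$ is defined only where $f_t(x)$ is finite; since $\{(t,x)\mid f(t,x)\in\R\}$ is product measurable by joint measurability of $f$, one intersects with this set, noting that $\partial f_t(x)=\emp$ outside $\dom f_t$ so no part of the graph is lost. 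With these routine repairs the proof is complete.
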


Finally in this section, we recall the notions of expected-integral functional and expected-integral multifunctions studied in \cite{mp2,mp3} and in what follows. Given a normal integrand $f:T\times \X \to \Rex$, define the {\em expected-integral functional} associated with $f$ by
\begin{align}\label{def:set-valued:Exp0}
\Intfset{f}(x):=\int_{ T  } f_t(x)\dmu=\int_T\Big[\max\big\{  f_t(x),0\big\}+ \min\big\{  f_t(x),0\big\}\Big]\dmu.
\end{align}
For a set-valued normal integrand $\Phi : T\times \X \to \Y$, the {\em expected-integral multifunction} is defined by
\begin{align}\label{def:set-valued:Exp}
\Intfset{\Phi}(x):=\int_{ T  } \Phi_t(x)\dmu.
\end{align}
Observe that for $\Phi(t,x):=\{ \alpha \in \mathbb{R}\;|\;f_t(x)\leq \alpha		\}$ we have $\epi \Intf{f} = \gph \Intfset{\Phi}$. 

Our {\em standing assumptions} in the subsequent study of expected-integral multifunctions of type \eqref{def:set-valued:Exp} around a reference point $\ox\in\dom \Intfset{\Phi}$ are formulated as follows: there exists a number $\rho>0$ such that   
\begin{align}\label{convex_cond_set} 
\begin{aligned}
\Phi_t(x) &\text{ is convex for all } x\in \mathbb{B}_\rho(\ox) \text{ and all } t\in T_{na},\\
\Phi_t(x) &\subseteq \kappa(t) \mathbb{B} \text{  for all } x \in \mathbb{B}_\rho(\ox) \text{ and all } t\in T,
\end{aligned}
\end{align}
where $T_{na}$ is the nonatomic part of the measure $\mu$. We associate with the integrand $\Phi$ the set-valued mapping
$\mathcal{S}_\Phi$ defined by
\begin{align}\label{mapping:SPHI}
\mathcal{S}_{\Phi}(x,y):=\left\{\y\in\Leb^1(T,\Y)\Big|\int_T\y(t)\dmu=y\text{ and }\y(t)\in \Phi_t(x)\text{ a.e.}\right\}.
\end{align}\vspace*{-0.35in}

\section{Lipschitzian properties for random multifunctions}\label{Section:Lipschitz-Likeproperty}\vspace*{-0.05in}\setcounter{equation}{0}

This section is devoted to the study of various {\em Lipschitzian} properties 
of {\em random} set-valued mappings that are largely used in our sensitivity analysis in what follows. These properties were introduced in \cite{mp3} with establishing relationships between them and their applications to coderivative calculus of random multifunctions. First we recall the formulations.\vspace*{-0.05in}

\begin{definition}\label{int-lipl} Let $\Phi: T\times \X \tto \Y$ be  a set-valued normal integrand. 
\begin{enumerate}
\item[\bf(i)] We say that $\Phi$ is {\em integrably locally Lipschitzian} at $\ox\in\dom\Phi$ if there exists $\eta >0$, $\ell \in \Leb^1(T,\R)$, and $\Hat T \in\mathcal{A}$ with $\mu(T \backslash \Hat T)$ such that  
\begin{align}\label{eq_definition_Int_loc}
\Phi_t(x)\subseteq \Phi_t(x') + \ell(t)\| x - x'\|\mathbb{B}\text{ for all } t\in \Hat T \text{ and } x,x'\in \mathbb{B}_\eta(\ox).
\end{align}

\item[\bf(ii)] Pick $(\ox,\oy)\in\gph\Intfset{\Phi}$ with $\gph\Intfset{\Phi}$ defined in \eqref{def:set-valued:Exp} and take $\ooy\in\mathcal{S}_{\Phi}(\ox,\oy)$ for ${\cal S}_\Phi$ from \eqref{mapping:SPHI}. We say that $\Phi$ is {\em integrably quasi-Lipschitzian} around $(\ox,\ooy)$ if there exist $\eta>0$ and $\ell\in\Leb^1(T,\R_+)$ such that 
\begin{equation}\label{Int_Lips_like_inq}
\sup\big\{\|x^\ast\|\;\big|\;x^\ast\in{D}^\ast\Phi_t\big(\x(t),\y(t)\big)\big(\y^\ast(t)\big)\big\}\le\ell(t)\|\y^\ast(t)\|
\end{equation}for a.e.\ $t\in T$ and all
$\x\in\mathbb{B}_\eta(\ox)$, $\y\in\mathbb{B}_\eta(\ooy)\cap\Phi(\x)$, $\y^\ast\in\Leb^\infty(T,\Y)$ with 
\begin{equation*}
\mathbb{B}_\eta(\ooy)\cap\Phi(\x):=\big\{\y\in\Leb^1(T,\Y)\;\big|\;\y\in\mathbb{B}_\eta(\ooy)\;\text{ and }\;\y(t)\in\Phi_t\big(\x(t)\big)\;\text{ a.e.}\big\}. 
\end{equation*}
	
\item[\bf(iii)] Let $(\ox,\oy)\in\gph\Intfset{\Phi}$, and let $\ooy\in\mathcal{S}_{\Phi}(\ox,\oy)$ for ${\cal S}_\Phi$ taken from
\eqref{mapping:SPHI}. We say that $\Phi$ is {\em integrably Lipschitz-like} around $(\ox,\ooy)$ if there exist positive constants $\ell, \eta,\gg$ and a measurable set $\Hat T\in\mathcal{A}$ with $\mu(T\backslash\Hat T)=0$ such that 
\begin{equation}\label{eq_definition_Int_loc:Lipschitz-like}
\Phi_t(x)\cap\B_{\gg}\big(\y(t)\big)\subset\Phi_t(x')+\ell\|x-x'\|\mathbb{B}
\end{equation} 
for all $t\in\Hat T\;\text{ and }\;x,x'\in\mathbb{B}_{\eta}(\ox)$.
\end{enumerate}
\end{definition}

Note that if $\Phi$ is locally single-valued and Lipschitz continuous on 
$\mathbb{B}_\eta(\ox)$, then \eqref{eq_definition_Int_loc} reduces to the condition
\begin{align}\label{eq:Lipsc}
\| \Phi_t( x) - \Phi_t( x')\| \leq \ell(t)   \| x- x'\| \text{ for   all } t\in \Hat T, \text{ and all } x,x'\in \mathbb{B}_\eta(\ox).
\end{align}
Similarly we can rewrite \eqref{eq_definition_Int_loc:Lipschitz-like} in such a case.\vspace*{0.05in}

Our first result here derives coderivative conditions from the above Lipschitzian properties of random multifunctions.\vspace*{-0.05in}

\begin{theorem}\label{Prop61}
Let $\Phi: T\times \X \tto \Y$ be a set-valued normal integrand, let $\ox\in \dom \Phi $, and  let  $\oy \in \Intfset{\Phi}(\ox)$. Then we have the following assertions:
\begin{enumerate}[label=\alph*)]
\item[\bf(i)]\label{Prop61_case_a}  Let $\ooy \in \mathcal{S}_\Phi(\ox,\oy)$ and assume that  $\Phi$   satisfies the integrable quasi-Lipschitz condition \eqref{Int_Lips_like_inq} around $(\ox,\ooy)$. Then there exists $\ell \in \Leb^1(T,\R)$ such that  for every $\ooy^\ast \in \Leb^\infty(T, \Y)$  we have 
\begin{align}\label{Prop61_eq01}
\sup\left\{ \| x^\ast\|\;\big|\;x^\ast \in {D}^\ast\Phi_t\big(\ox,\ooy(t)\big)(\y^\ast(t)) \right\} \leq \ell(t)\|\y^\ast(t)\| \text{ a.e. }
\end{align}
\item[\bf(ii)] \label{Prop61_case_b}  $\Phi$ satisfies the   integrable locally Lipschitz condition \eqref{eq_definition_Int_loc} at $\ox$, then there exist $\eta >0$, $\ell \in \Leb^1(T,\R)$  and    $\Hat T \in\mathcal{A}$ with $\mu(T \backslash \Hat T)$ such that for all $y^\ast$
\begin{align}\label{Prop61_eq02}
\sup\left\{ \| x^\ast\| : x^\ast \in  {D}^\ast \Phi_t(x,y)(y^\ast) \right\} \leq \ell(t)\|y^\ast\|, \; \forall y \in \Phi_t(x), \, \forall t\in \Hat T.
\end{align}
\end{enumerate}
\end{theorem}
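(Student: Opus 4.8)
The plan is to treat the two assertions by different mechanisms: assertion (i) is a direct specialization of the defining inequality of the integrable quasi-Lipschitz property to constant arguments, while assertion (ii) follows by applying the Mordukhovich coderivative criterion \eqref{cod-cr} separately to each fiber $\Phi_t$.

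For part (i), I would substitute the constant selection $\x\equiv\ox$ together with the fixed selection $\y=\ooy$ into the defining inequality \eqref{Int_Lips_like_inq}. Since $\ooy\in\mathcal{S}_\Phi(\ox,\oy)$ belongs to $\Leb^1(T,\Y)$ and satisfies $\ooy(t)\in\Phi_t(\ox)$ for a.e.\ $t$, these choices are admissible in the sense of Definition~\ref{int-lipl}(ii): $\x\equiv\ox$ is the center of $\mathbb{B}_\eta(\ox)$, and $\y=\ooy$ is the center of $\mathbb{B}_\eta(\ooy)$ lying in $\Phi(\ox)$. Hence for every $\y^\ast\in\Leb^\infty(T,\Y)$ the inequality \eqref{Int_Lips_like_inq} evaluated at these arguments is literally \eqref{Prop61_eq01}, with the very same $\ell\in\Leb^1(T,\R_+)\subseteq\Leb^1(T,\R)$ furnished by the property. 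The only thing to observe is that the exceptional null set may depend on $\y^\ast$, which is exactly the form of the conclusion in (i).

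For part (ii), I would fix an arbitrary $t\in\Hat T$ together with a point $x\in\inter\mathbb{B}_\eta(\ox)$ and any $y\in\Phi_t(x)$. The inclusion \eqref{eq_definition_Int_loc} says that $\Phi_t$ enjoys the Aubin (Lipschitz-like) property \eqref{lip} on the whole ball $\mathbb{B}_\eta(\ox)$ with the single modulus $\ell(t)$; choosing a small neighborhood $U\subseteq\inter\mathbb{B}_\eta(\ox)$ of $x$ and $V=\Y$ in \eqref{lip} exhibits $\Phi_t$ as Lipschitz-like around $(x,y)$, so that $\lip\Phi_t(x,y)\le\ell(t)$. Because $\Phi$ is a set-valued normal integrand, each $\Phi_t$ is of closed graph, whence the criterion \eqref{cod-cr} applies and gives $\|D^\ast\Phi_t(x,y)\|=\lip\Phi_t(x,y)\le\ell(t)$. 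Unwinding the definition of the coderivative norm as a positively homogeneous map of $y^\ast$ then yields $\|x^\ast\|\le\ell(t)\|y^\ast\|$ for all $y^\ast$ and all $x^\ast\in D^\ast\Phi_t(x,y)(y^\ast)$, which is precisely \eqref{Prop61_eq02}.

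The step requiring the most care is the passage from the closed-ball inclusion \eqref{eq_definition_Int_loc} to a genuinely two-sided local neighborhood at interior points: restricting $x$ to the open ball (equivalently, shrinking $\eta$ slightly) guarantees a neighborhood $U$ with $U\subseteq\mathbb{B}_\eta(\ox)$ on which \eqref{lip} holds for all $u,v\in U$, which is what legitimizes the application of \eqref{cod-cr}. A secondary, purely measure-theoretic point is the joint measurability in $t$ of the suprema on the left of \eqref{Prop61_eq01} and \eqref{Prop61_eq02}; this is ensured by the graph measurability of $t\mapsto\gph D^\ast\Phi_t$ from Lemma~\ref{lemma_measurability_reg_sub}(c), rendering these pointwise bounds meaningful as inequalities between measurable functions of $t$.
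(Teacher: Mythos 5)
Your part (ii) is correct and takes a mildly different route from the paper. The paper cites \cite[Theorem~4.4]{mp3} to bound the \emph{regular} coderivatives $\Hat{D}^\ast\Phi_t(v,w)(w^\ast)$ at all points $v\in\mathbb{B}_{\eta/3}(\ox)$, $w\in\Phi_t(v)$, and then upgrades to the limiting coderivative through the representation \eqref{Lim_repr_cod}; you instead apply the Mordukhovich criterion \eqref{cod-cr} with its exact-bound formula fiberwise to each closed-graph mapping $\Phi_t$, reading \eqref{eq_definition_Int_loc} as the Aubin property \eqref{lip} with $V=\Y$ and modulus $\ell(t)$ on a neighborhood $U$ of $x$. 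Both are legitimate; your route packages the regular-estimate-plus-limiting step inside the citation of \eqref{cod-cr}, and your care about shrinking to the open ball so that $U\subseteq\mathbb{B}_\eta(\ox)$ is a genuine neighborhood is exactly the point the paper handles by restricting to $x\in\mathbb{B}_{\eta/2}(\ox)$.

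Part (i) is where the comparison is delicate. Against the definition \eqref{Int_Lips_like_inq} as literally printed --- with the \emph{limiting} coderivative $D^\ast\Phi_t$ --- your substitution $\x\equiv\ox$, $\y=\ooy$ is admissible (the constant function is the center of $\mathbb{B}_\eta(\ox)$ in $\Leb^\infty$, and $\ooy\in\mathcal{S}_\Phi(\ox,\oy)$ gives $\ooy\in\mathbb{B}_\eta(\ooy)\cap\Phi(\ox)$), so your argument is formally valid and yields \eqref{Prop61_eq01} verbatim. But it trivializes the assertion, and the paper's own proof shows this is not the content the authors are after: their multifunction $M$ is built from \emph{regular} coderivative elements $u^\ast\in\Hat{D}^\ast\Phi_t(u,v)(v^\ast)$ at nearby points, the quantity $\rho(t)$ is approximated via \eqref{Lim_repr_cod}, and a measurable selection (Lemma~\ref{lemma_measurability_reg_sub} together with \cite[Theorem~III.22]{cv}) assembles the $t$-dependent approximating points into admissible functions $\x\in\mathbb{B}_\gamma(\ox)$ and $\y$ with $\|\y-\ooy\|_1\le\gamma$ before the hypothesis is invoked once. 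That machinery is indispensable when the quasi-Lipschitz property is formulated via $\Hat{D}^\ast$ (as in the source \cite{mp3}, which the printed $D^\ast$ in \eqref{Int_Lips_like_inq} appears to transcribe loosely): under that reading, the limiting coderivative at $(\ox,\ooy(t))$ is generated by regular coderivatives at approximating points that vary with $t$, and since the hypothesis holds only for a.e.\ $t$ \emph{for each fixed admissible triple of measurable functions}, you cannot apply it pointwise at individually chosen approximants --- you must first select them measurably in $t$, a step your one-line argument has no substitute for. So record your proof of (i) as correct relative to the text as written, but understand that the substantive claim (and the reason the theorem is not a tautology) is the passage from regular-coderivative bounds along measurable selections to the limiting bound \eqref{Prop61_eq01}.
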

\begin{proof}
Suppose without loss of generality that $\mu(T)=1$. To verify the first assertion of the theorem,  take $\ell \in \Leb^1(T,\R)$ and $\eta >0$ from the definition of the integrable quasi-Lipschitzian 
property \eqref{Int_Lips_like_inq} and pick any $\gamma \in (0,\eta) $. Then Lemma~\ref{lemma_measurability_reg_sub} tells us that the function
$$
t\to \rho(t):=\sup\left\{ \| x^\ast\|\;\big|\; x^\ast \in {D}^\ast \Phi_t(\ox,\ooy(t))(\y^\ast(t)) \right\}
$$ is measurable, and hence the set $\Hat T:= \{ t \in T\;|\;\rho(t)> 0\}$ is measurable as well. Having $ {D}^\ast \Phi_t(\ox,\ooy(t))(\y^\ast(t)) \neq \emptyset$ for all $t\in \Hat T$, define the multifunction $M_\gamma:\Hat  T\tto \R^{2(n+m)}$ by
\begin{align*}
(u,u^\ast, v ,v^\ast) \in M(t) \Longleftrightarrow \begin{array}{c}
u^\ast \in\Hat {D}^\ast \Phi_t(u,v)(v^\ast), u \in \mathbb{B}_\gamma(\ox), v \in \mathbb{B}_{ \gamma }\big(\ooy(t)\big), \\
\rho(t) \leq \| u^\ast \| + \eta, \; \| v^\ast - \ooy^\ast(t)\| \leq \gamma/ (1+ \ell(t)).
\end{array}
\end{align*}
If follows from Lemma~\ref{lemma_measurability_reg_sub} and the measurability of the functions above that $M$ is graph measurable. Furthermore, the definitions of the coderivative and of the functions $\rho$ ensure that $M(t)\ne\emp$ for all $t\in \hat T$. The classical measurable selection theorem (see, e.g., \cite[Theorem~III.22]{cv}) gives us a measurable quadruple $(u(t),u^\ast(t), v(t) ,v^\ast(t)) \in M(t)$ for all $t\in \Hat T$. Employing now the quasi-Lipschitzian condition \eqref{Int_Lips_like_inq}, we 
get
\begin{align*}
\rho(t) &\leq \| u^\ast(t) \| + \eta \leq \ell(t) \| v^\ast(t) \|  + \eta 
\le\ell(t) \| \ooy^\ast(t) \| + \ell(t)\| v^\ast - \ooy^\ast(t)\| + \eta \\
&\leq  \ell(t) \| \ooy^\ast(t) \| +2 \eta\;\mbox{ for all }\;t\in\Hat T.
\end{align*}
Since $\eta>0$ was chosen arbitrarily, we arrive at \eqref{Prop61_eq01} and  justify assertion (i).

To proof the second assertion, pick $\eta >0$, $\ell \in \Leb^1(T,\R)$,  and    $\Hat T \in\mathcal{A}$ with $\mu(T \backslash \Hat T)$ such that  the   integrable locally Lipschitzian condition \eqref{eq_definition_Int_loc} is satisfied around $\ox$. Then fix $x \in \mathbb{B}_{\eta/2}(\ox)$, $t\in \Hat T$, and $y \in \Phi_t(x)$ and $y^\ast\in \Y$. It follows from \cite[Theorem~4.4]{mp3} that 
\begin{align*} 
\sup\left\{ \| v^\ast\|\;\Big|\;v^\ast \in \Hat{D}^\ast \Phi_t(v,w)(w^\ast) \right\} \leq \ell(t)\|w^\ast\|
\end{align*}
for all $v\in\B_{\eta/3}(\ox)$, $w\in\Phi_t(v)$, and $w^*\in\Y$. Using 
the limiting coderivative representation \eqref{Lim_repr_cod}, we arrive at \eqref{Prop61_eq02} and thus complete the proof.
\end{proof}\vspace*{-0.05in}

Before presenting the main result of this section, we formulate the next lemma taken from \cite{mp3}, which provides {\em coderivative Leibniz rules} for expected-integral multifunctions. Recall that the mapping $\mathcal{S}_\Phi$ from \eqref{mapping:SPHI} is {\em inner semicompact} at $(\ox,\oy)$ if for every sequence $(x_k,y_k)\to(\ox,\oy)$ there exists a sequence $\y_k\in\mathcal{S}_\Phi(x_k,y_k)$ containing an $\Leb^1({T},\Y)$-norm convergent subsequence.\vspace*{-0.05in}

\begin{lemma}\label{main.theorem} Let $\Phi\colon T\times\X\tto\Y$ be a set-valued normal integrand, let $\ox\in\dom\Intfset{\Phi}$ satisfy the conditions in \eqref{convex_cond_set}, and let $\oy\in\Intfset{\Phi}(\ox)$. Then we 
have: \vspace*{-0.05in}
\begin{enumerate}[label=\alph*),ref=\ref{main.theorem}-\alph*)]
\item[\bf(i)] \label{Sequential_Regular} For any $\ooy\in\mathcal{S}_{\Phi}(\ox,\oy)$ there exist sequences $\{x_k\}\subset\X$, $\{\x_k\}\subset\Leb^\infty({T},\X)$, $\{\x_k^\ast\}\subset{\Leb}^1({T},\X)$, $\{\y_k\}\subset\Leb^1(T,\Y)$, and $\{\y_k^\ast\}\subset\Leb^\infty(T,\Y)$ such that the following assertions hold:
\begin{enumerate}[label=\alph*),ref=\alph*)] \item[\bf(a)] $\x_k^*(t)\in\Hat{D}^\ast\Phi_t\big(\x_k(t),\y_k(t)\big)\big(\y_k^\ast(t)\big)$ for a.e.\ $t\in T$ and all $k\in\N$. 
\item[\bf(b)] $\|\ox-x_k\|\to 0$, $\|\ox-\x_k\|_\infty\to 0$, $\disp\int_T\|\ooy(t)-\y_k(t)\|\dmu\to 0$, and $\|\y_k^\ast-\oy^\ast\|_\infty\to 0$ as $k\to\infty$. 
\item[\bf(c)] $\disp\int_T\|\x_k^*(t)\|\cdot\|\x_k(t)-x_k\|\dmu\to 0$ and $\disp\int_T\x_k^*(t) \dmu\to\ox^\ast$. 
\end{enumerate}
\item[\bf(ii)]\label{Theo_Regular_coder00} 
Pick $\ooy\in\mathcal{S}_{\Phi}(\ox,\oy)$ and assume that $\Phi$ is integrably quasi-Lipschitzian around $(\ox,\ooy)$. Then we have the inclusion
\begin{equation*}\hspace{-0.1cm}	\Hat{D}^\ast\Intfset{\Phi}(\ox,\oy)(y^\ast)\subseteq\int_T{D}^\ast\Phi_t\big(\ox,\ooy(t)\big)(y^\ast)\dmu\;\mbox{ for all }\;y^\ast\in\Y.
\end{equation*}
\item[\bf(iii)]\label{Theo_lim_basic_Cod00} If $\mathcal{S}_\Phi$ is inner semicompact at $(\ox,\oy)$ and if $\Phi$ is integrably quasi-Lipschitzian around $(\ox,\ooy)$ for all $\ooy \in \mathcal{S}_\Phi(\ox,\oy)$, then
\begin{equation*}
\hspace{-0.1cm}	{D}^\ast\Intfset{\Phi}(\ox,\oy)(y^\ast)\subseteq\bigcup\limits_{\ooy\in\mathcal{S}_{\Phi}(\ox,\oy)}\int_T{D}^\ast\Phi_t\big(\ox,\ooy(t)\big)(y^\ast)\dmu\;\mbox{ for all }\;y^\ast\in\Y.
\end{equation*}
\item[\bf(iv)]\label{cor_Basic_coder} Suppose that $\Phi$ satisfies the   integrable locally Lipschitzian condition \eqref{eq_definition_Int_loc} at $\ox$  and that $ \Intfset{\Phi}(\ox)$ is single-valued. Then $\mathcal{S}_\Phi$ is inner semicompact at $(\ox,\Intfset{\Phi}(\ox))$, and we have the inclusion
\begin{equation*}
{D}^\ast \Intfset{ \Phi }(\ox)(y^\ast)\subseteq \int_T {D }^\ast \Phi_t(\ox)(y^\ast)\dmu.
\end{equation*}
\end{enumerate}
\end{lemma}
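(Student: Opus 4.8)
The plan is to regard assertion (i) as the analytic core of the lemma and to obtain (ii)--(iv) from it by compactness and limiting arguments. For (i), fix $\oy^\ast\in\Y$ and a regular coderivative element $\ox^\ast\in\Hat D^\ast\Intfset{\Phi}(\ox,\oy)(\oy^\ast)$, which by \eqref{rcod} means $(\ox^\ast,-\oy^\ast)\in\Hat N\big((\ox,\oy);\gph\Intfset{\Phi}\big)$, and recall from \eqref{def:set-valued:Exp} that $\gph\Intfset{\Phi}$ is the image of the fiberwise constraint set $\{(x,\y)\;|\;\y(t)\in\Phi_t(x)\text{ a.e.}\}$ under the bounded linear integration operator $\y\mapsto\int_T\y(t)\dmu$. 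By the definition \eqref{rnc} of the regular normal cone, for every $\epsilon>0$ there is $\delta>0$ with
\begin{equation*}
\langle\ox^\ast,x-\ox\rangle-\langle\oy^\ast,y-\oy\rangle\le\epsilon\big(\|x-\ox\|+\|y-\oy\|\big)
\end{equation*}
for all $(x,y)\in\gph\Intfset{\Phi}$ near $(\ox,\oy)$. Since each admissible $y$ is realized as $y=\int_T\y(t)\dmu$ with $\y(t)\in\Phi_t(x)$, the task is to \emph{disintegrate} this single inequality on the Aumann integral into \emph{pointwise} regular coderivative memberships for the fibers $\Phi_t$. Here the standing assumptions \eqref{convex_cond_set} are essential: the bound $\Phi_t(x)\subseteq\kappa(t)\mathbb{B}$ supplies integrable domination and weak $\Leb^1$-compactness of selections, while convexity of $\Phi_t$ on the nonatomic part allows a Lyapunov--Richter convexification so that optimization over selections decouples fiberwise.

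Concretely, for $\epsilon_k\downarrow 0$ I would minimize over pairs $(x,\y(\cdot))$ with $\y(t)\in\Phi_t(x)$ a functional penalizing the linearization $-\langle\ox^\ast,x\rangle+\langle\oy^\ast,\int_T\y(t)\dmu\rangle$ together with distance terms that keep $(x,\y)$ near $(\ox,\ooy)$ and tie the fiber arguments to a common center $x_k$. Existence of a minimizer follows from lower semicontinuity of the normal integrand and the boundedness in \eqref{convex_cond_set}; its first-order optimality conditions, processed fiberwise through the nonatomic convexity, yield for a.e.\ $t$ a relation $\x_k^\ast(t)\in\Hat D^\ast\Phi_t(\x_k(t),\y_k(t))(\y_k^\ast(t))$, which is (a). Measurability of the quadruples $(\x_k,\y_k,\x_k^\ast,\y_k^\ast)$ is guaranteed by Lemma~\ref{lemma_measurability_reg_sub}(b) and the classical measurable selection theorem, exactly as in the proof of Theorem~\ref{Prop61}. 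Reading off the penalty terms as $\epsilon_k\downarrow 0$ then delivers the convergences (b) and the limits (c); in particular the center-tying penalty forces $\int_T\|\x_k^\ast(t)\|\cdot\|\x_k(t)-x_k\|\dmu\to 0$ while $\int_T\x_k^\ast(t)\dmu\to\ox^\ast$.

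Given (i), assertion (ii) follows by a weak-compactness argument. The integrable quasi-Lipschitzian hypothesis \eqref{Int_Lips_like_inq}, valid along the sequences of (i) once they enter the required neighborhoods by (b), furnishes $\ell\in\Leb^1(T,\R)$ with $\|\x_k^\ast(t)\|\le\ell(t)\|\y_k^\ast(t)\|$ a.e.\ (as in Theorem~\ref{Prop61}(i)); since $\|\y_k^\ast-\oy^\ast\|_\infty\to 0$, the family $\{\x_k^\ast\}$ is bounded and uniformly integrable in $\Leb^1(T,\X)$. By the Dunford--Pettis theorem I extract a subsequence converging weakly in $\Leb^1$ to some $\x^\ast$ with $\int_T\x^\ast(t)\dmu=\ox^\ast$ by (c). A Mazur-type argument, combined with the a.e.\ convergences $(\x_k(t),\y_k(t),\y_k^\ast(t))\to(\ox,\ooy(t),\oy^\ast)$ along a further subsequence and the outer-limit representation \eqref{Lim_repr_cod}, identifies $\x^\ast$ as an a.e.\ selection of $t\mapsto D^\ast\Phi_t(\ox,\ooy(t))(\oy^\ast)$, the nonatomic convexity in \eqref{convex_cond_set} absorbing the convexification inherent in passing to weak limits. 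This yields $\ox^\ast\in\int_T D^\ast\Phi_t(\ox,\ooy(t))(\oy^\ast)\dmu$, which is (ii).

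Assertion (iii) is then obtained by combining (ii) with the limiting coderivative representation \eqref{Lim_repr_cod} applied to $\Intfset{\Phi}$: for $\ox^\ast\in D^\ast\Intfset{\Phi}(\ox,\oy)(\oy^\ast)$ there are graphical sequences $(x_j,y_j)\to(\ox,\oy)$, $\oy_j^\ast\to\oy^\ast$, and regular coderivative elements converging to $\ox^\ast$; inner semicompactness of $\mathcal{S}_\Phi$ lets me pick $\y_j\in\mathcal{S}_\Phi(x_j,y_j)$ with an $\Leb^1$-convergent subsequence $\y_j\to\ooy\in\mathcal{S}_\Phi(\ox,\oy)$, and applying (ii) at each $j$ followed by a further limit, under the same domination, places $\ox^\ast$ in $\int_T D^\ast\Phi_t(\ox,\ooy(t))(\oy^\ast)\dmu$, hence in the asserted union. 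For (iv), the integrable locally Lipschitzian condition \eqref{eq_definition_Int_loc} implies both the quasi-Lipschitzian hypothesis (via Theorem~\ref{Prop61}(ii)) and, through the equi-bounded selections afforded by \eqref{convex_cond_set}, the inner semicompactness of $\mathcal{S}_\Phi$; single-valuedness of $\Intfset{\Phi}(\ox)$ forces the fibers to be a.e.\ single-valued by the nonatomic convexity, so $\mathcal{S}_\Phi(\ox,\oy)$ is a singleton and the union in (iii) collapses to the stated integral. The \textbf{main obstacle} is precisely the disintegration in (i): converting the global normal-cone inequality on the Aumann integral into a.e.\ fiberwise regular coderivative memberships carried by measurable, integrally convergent selections, which is exactly where the nonatomic convexity of \eqref{convex_cond_set} and the measurable selection machinery of Lemma~\ref{lemma_measurability_reg_sub} do the essential work.
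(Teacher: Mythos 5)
A preliminary remark on the comparison itself: the paper contains no proof of this lemma --- it is explicitly ``taken from \cite{mp3}'', so the honest benchmark is the machinery of \cite{mp2,mp3}, with which your outline is directionally consistent (sequential analysis in $\X\times\Leb^1(T,\Y)$, measurable selections via Lemma~\ref{lemma_measurability_reg_sub}, Lyapunov-type convexification on the nonatomic part, Dunford--Pettis in $\Leb^1$). The genuine gap is exactly where you yourself locate the difficulty: part (i). Saying that you ``minimize a penalized functional'' and that ``its first-order optimality conditions, processed fiberwise through the nonatomic convexity'' yield $\x_k^\ast(t)\in\Hat D^\ast\Phi_t(\x_k(t),\y_k(t))(\y_k^\ast(t))$ is an assertion, not an argument, and it fails on three concrete points. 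First, existence of exact minimizers over the nonconvex constraint set $\{(x,\y)\;|\;\y(t)\in\Phi_t(x)\text{ a.e.}\}$ is not secured by \eqref{convex_cond_set} (convexity of values is imposed only on $T_{na}$, and the constraint couples $x$ with all fibers); the standard substitute is an Ekeland-type argument in $\X\times\Leb^1$, which you never invoke and which changes the form of the stationarity conditions. Second, even granted an approximate minimizer, extracting \emph{pointwise regular coderivative} memberships from stationarity of an integral functional is precisely the content of the sequential subdifferential calculus for expected-integral functionals developed in \cite{mp2}; it is a substantial theorem, not a routine first-order condition, and your sketch supplies no mechanism for it. Third, your construction ties all fibers to ``a common center $x_k$'', but then nothing in it produces the $t$-dependent base points $\x_k(t)$ \emph{distinct} from $x_k$ together with the compensating cross-term $\int_T\|\x_k^\ast(t)\|\cdot\|\x_k(t)-x_k\|\dmu\to 0$ of assertion (c); this approximate decoupling is the whole analytic point of (i), and an argument delivering coderivatives at the common point $(x_k,\y_k(t))$ would be proving a different, and in general false, statement.

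There is also a smaller but real slip in your passage to (ii): Mazur's lemma gives a.e.\ limits of \emph{convex combinations}, hence only $\x^\ast(t)\in\clco D^\ast\Phi_t(\ox,\ooy(t))(\oy^\ast)$ a.e., and coderivative values are not convex --- the convexity in \eqref{convex_cond_set} concerns the values $\Phi_t(x)$, not coderivative values, so it cannot ``absorb'' this hull pointwise as you claim. What actually saves the conclusion is the convexifying effect of the Aumann integral itself: on the nonatomic part one has $\int_A\clco M(t)\dmu=\int_A M(t)\dmu$ by Lyapunov--Richter (with integrable boundedness furnished by the quasi-Lipschitz estimate as in Theorem~\ref{Prop61}(i) and measurability by Lemma~\ref{lemma_measurability_reg_sub}), while on atoms weak $\Leb^1$ convergence amounts to convergence of the a.e.\ constant values, so the robustness of $D^\ast$ in \eqref{Lim_repr_cod} applies directly. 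Your reductions of (iii) to (ii) via the limiting representation and inner semicompactness, and of (iv) via a.e.\ single-valuedness of the fibers together with the inclusion $\Phi_t(x_k)\subseteq\Phi_t(\ox)+\ell(t)\|x_k-\ox\|\mathbb{B}$ (which yields norm, not merely weak, $\Leb^1$ convergence of selections --- equi-boundedness alone would not suffice for inner semicompactness) are correct in outline, but they rest on the unproven core (i).
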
	

Here is the main result of this section giving us sufficient conditions for the Lipschitz-like property of expected-integral multifunctions \eqref{def:set-valued:Exp}.\vspace*{-0.02in}
	
\begin{theorem}\label{LipLikeEphi}
Let $\Phi: T\times \X \tto \Y$ be a set-valued normal integrand, let $\ox\in \dom \Phi$ under condition \eqref{convex_cond_set}, and let $\oy \in \Intfset{\Phi}(\ox)$.  Assume that\vspace*{-0.05in}
\begin{enumerate}[label=\alph*)]
\item[\bf(a)] \label{LipLikeEphi_casea} either $\Phi$ satisfies the integrable quasi-Lipschitzian condition \eqref{Int_Lips_like_inq} around $(\ox,\ooy)$ for all $\ooy \in \mathcal{S}_\Phi(\ox,\oy)$ and $\mathcal{S}_\Phi$ is inner semicompact at $(\ox,\oy)$,
\item[\bf(b)] \label{LipLikeEphi_caseb} or $\Phi$ satisfies the integrable locally Lipschitzian condition \eqref{eq_definition_Int_loc} at $\ox$.
\end{enumerate}\vspace*{-0.05in}
Then the expected-integral multifunction $\Intfset{\Phi}$ is Lipschitz-like around $(\ox,\oy)$.
\end{theorem}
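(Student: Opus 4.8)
The plan is to deduce the Lipschitz-like property from the Mordukhovich coderivative criterion \eqref{cod-cr}. Under the standing assumptions \eqref{convex_cond_set} the expected-integral multifunction $\Intfset{\Phi}$ has locally closed graph around $(\ox,\oy)$ (boundedness by the integrable radius $\kappa(\cdot)$ together with convexity of the values on the nonatomic part makes the Aumann integral closed, as in \cite{mp2,mp3}), so by \eqref{cod-cr} it suffices to prove that $D^\ast\Intfset{\Phi}(\ox,\oy)(0)=\{0\}$, i.e., that the only $x^\ast$ with $(x^\ast,0)\in N((\ox,\oy);\gph\Intfset{\Phi})$ is $x^\ast=0$.

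In case \textbf{(a)}, the inner semicompactness of $\mathcal{S}_\Phi$ at $(\ox,\oy)$ and the integrable quasi-Lipschitzian property around every $(\ox,\ooy)$ with $\ooy\in\mathcal{S}_\Phi(\ox,\oy)$ are exactly the hypotheses of Lemma~\ref{main.theorem}(iii), which at $y^\ast=0$ yields
\[
D^\ast\Intfset{\Phi}(\ox,\oy)(0)\subseteq\bigcup_{\ooy\in\mathcal{S}_\Phi(\ox,\oy)}\int_T D^\ast\Phi_t\big(\ox,\ooy(t)\big)(0)\dmu.
\]
Fixing $\ooy$ and applying Theorem~\ref{Prop61}(i) with the adjoint function taken identically zero, the bound $\ell(t)\|0\|$ vanishes a.e., forcing $D^\ast\Phi_t(\ox,\ooy(t))(0)\subseteq\{0\}$ for a.e.\ $t\in T$. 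Hence each term in the union reduces to $\{0\}$, so $D^\ast\Intfset{\Phi}(\ox,\oy)(0)\subseteq\{0\}$; since the coderivative always contains the origin, we get equality, and \eqref{cod-cr} proves the Lipschitz-like property in case (a).

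In case \textbf{(b)} I would argue directly at the level of sets, bypassing the coderivative calculus. Choose $\eta>0$, $\ell\in\Leb^1(T,\R)$, and a full-measure set $\Hat T$ from \eqref{eq_definition_Int_loc}, so that $\Phi_t(x)\subseteq\Phi_t(x')+\ell(t)\|x-x'\|\mathbb{B}$ for all $t\in\Hat T$ and $x,x'\in\mathbb{B}_\eta(\ox)$. Integrating this inclusion over $T$ in the Aumann sense \eqref{def:set-valued:Exp}, using monotonicity and additivity of the Aumann integral (a selection of $\Phi_t(x')+\ell(t)\|x-x'\|\mathbb{B}$ splits measurably into a selection of $\Phi_t(x')$, which is integrable thanks to the bound $\Phi_t(x')\subseteq\kappa(t)\mathbb{B}$ in \eqref{convex_cond_set}, plus an integrable remainder) together with the identity $\int_T\ell(t)\mathbb{B}\dmu=\|\ell\|_1\mathbb{B}$, I obtain
\[
\Intfset{\Phi}(x)\subseteq\Intfset{\Phi}(x')+L\,\|x-x'\|\mathbb{B}\quad\text{for all }x,x'\in\mathbb{B}_\eta(\ox),\;\;L:=\|\ell\|_1.
\]
This (Hausdorff) local Lipschitz property of $\Intfset{\Phi}$ around $\ox$ immediately implies \eqref{lip} around $(\ox,\oy)$ with $U=\mathbb{B}_\eta(\ox)$, $V=\Y$, and modulus $L$.

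I expect two main obstacles. First, the local closedness of $\gph\Intfset{\Phi}$ needed to invoke \eqref{cod-cr} in case (a) genuinely relies on the convexity on the nonatomic part in \eqref{convex_cond_set}: without it the Aumann integral can fail to be closed and the criterion is unavailable. Second, in case (a) the Leibniz estimate of Lemma~\ref{main.theorem}(iii) is only an upper estimate and a union over all $\ooy\in\mathcal{S}_\Phi(\ox,\oy)$, each carrying its own modulus $\ell$ from Theorem~\ref{Prop61}(i); what makes the argument work is that evaluating at $y^\ast=0$ collapses every term, so the qualitative conclusion $D^\ast\Phi_t(\ox,\ooy(t))(0)=\{0\}$ — not merely a norm bound — is what is used. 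For case (b) I deliberately avoid the coderivative route, since verifying inner semicompactness of $\mathcal{S}_\Phi$ without single-valuedness (as assumed in Lemma~\ref{main.theorem}(iv)) would require an extra $\Leb^1$-norm-compactness (Lyapunov-type) argument; the direct integration of the set inclusion sidesteps this entirely and even yields the stronger full local Lipschitz property.
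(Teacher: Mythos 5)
Your proof is correct. Case (a) coincides with the paper's own argument: apply Lemma~\ref{main.theorem}(iii) at $y^\ast=0$ and then the estimate \eqref{Prop61_eq01} of Theorem~\ref{Prop61}(i), which annihilates every integrable selection of $D^\ast\Phi_t(\ox,\ooy(t))(0)$, so that $x^\ast=0$ and the criterion \eqref{cod-cr} applies. Case (b), however, takes a genuinely different and more elementary route. The paper stays inside the coderivative framework there as well: it invokes the limiting representation \eqref{Lim_repr_cod} to produce $x_k^\ast\in\Hat{D}^\ast\Intfset{\Phi}(x_k,y_k)(y_k^\ast)$ with $y_k^\ast\to 0$, applies the sequential Leibniz rule of Lemma~\ref{main.theorem}(i) together with a diagonal process to extract integrable selections $\x_k^\ast(t)$, bounds them pointwise by $\ell(t)\|\y_k^\ast(t)\|$ via \eqref{Prop61_eq02}, and passes to the limit to get $x^\ast=0$. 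You instead integrate the defining inclusion \eqref{eq_definition_Int_loc} directly: decomposing an integrable selection $\y(t)\in\Phi_t(x)$ measurably as $\y=\z+\w$ with $\z(t)\in\Phi_t(x')$ and $\|\w(t)\|\le\ell(t)\|x-x'\|$ — legitimate since $t\mapsto\{z\in\Phi_t(x')\,:\,\|\y(t)-z\|\le\ell(t)\|x-x'\|\}$ is graph measurable with nonempty values a.e., and the measure space is complete — yields $\Intfset{\Phi}(x)\subseteq\Intfset{\Phi}(x')+\|\ell\|_1\|x-x'\|\mathbb{B}$ near $\ox$, a Hausdorff-type estimate strictly stronger than \eqref{lip} (it gives $V=\Y$) which, unlike the route through \eqref{cod-cr}, requires no local closedness of $\gph\Intfset{\Phi}$ in case (b), a hypothesis the paper uses tacitly. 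Two small polishings: shrink the neighborhood to $\mathbb{B}_{\min(\eta,\rho)}(\ox)$ so that the bound $\Phi_t(x')\subseteq\kappa(t)\mathbb{B}$ from \eqref{convex_cond_set} is in force — though you do not actually need it, since $\|\z(t)\|\le\|\y(t)\|+\ell(t)\|x-x'\|$ is already integrable — and note that only the inclusion $\int_T\ell(t)\|x-x'\|\mathbb{B}\,\dmu\subseteq\|\ell\|_1\|x-x'\|\mathbb{B}$ is needed, which is immediate from the triangle inequality. Your closing remark is also on target: the paper's case-(b) proof needs the sequential machinery precisely because inner semicompactness of $\mathcal{S}_\Phi$ is unavailable without extra assumptions, and your direct set-level argument sidesteps this entirely while delivering a stronger conclusion.
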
\vspace*{-0.03in}
\begin{proof}
Let us check that in both cases of the theorem the expected-integral multifunction \eqref{def:set-valued:Exp} satisfies the coderivative criterion \eqref{cod-cr} for the Lipschitz-like property around $(\ox,\oy)$. Pick any $x^\ast \in D^\ast \Intfset{\Phi} (\ox,\oy)(0)$. Then in Case~(a) we use (iii) of Lemma~\ref{Theo_lim_basic_Cod00} and find $ \ooy \in S_{\Phi}(\ox, \oy ) $ together with an integrable selection $\x^\ast(t) \in {D }^\ast \Phi_t(\ox,\ooy(t))(0)$ for a.e. $t\in T$ such that
\begin{equation}\label{int}
x^\ast = \int_T \x^\ast(t)\dmu.
\end{equation}
It follows from the coderivative estimate \eqref{Prop61_eq01} of Theorem~\ref{Prop61} that $\x^\ast(t) =0$ for a.e. $t\in T$, and hence $x^\ast=0$ by \eqref{int}. 

In Case~(b), we employ the coderivative limiting representation \eqref{Lim_repr_cod} to find $x_k \to \ox$, $x^\ast_k \to x^\ast$, $y_k \to \oy$, $y_k^\ast \to 0$ with $x^\ast_k \in \Hat D^\ast \Intfset{\Phi}(x_k,y_k)(y_k^\ast)$. Picking any  $\y_k \in  \mathcal{S}(x_k,y_k)$ and applying to each $x^\ast_k$ the results of Lemma~\ref{Sequential_Regular}(i) with the subsequent usage of the diagonal process allow us to choose selections $\y_k(t)\in D^\ast \Phi_t(\x_k(t),\y_k(t))(\y^\ast(t))$ such that 
$$
\| \x_k - \ox\|_\infty,\;\| \y_k - \ooy\|_1,\;\|\y^\ast_k(t) \| \to 0,\;\mbox{ and }\;\int_T \x_k^\ast(t) \dmu \to x^\ast\;\mbox{ as }\;k\to\infty.
$$
Employing further estimate \eqref{Prop61_eq02} from Theorem~\ref{Prop61}, we get
$$
\|\x_k^\ast(t)\| \leq \ell(t)\| \y_k^\ast(t)\|\;\mbox{ for a.e. }\;t\in T\;\mbox{ and large }\;k\in\N.
$$
Passing to the limit as $k\to\infty$ yields $x^\ast=0$ and so completes the proof.
\end{proof}\vspace*{-0.25in}

\section{Sensitivity analysis for stochastic constraint systems}\label{Sensitivity_analysis_parametric_constraint}\vspace*{-0.05in}\setcounter{equation}{0}

In this section we conduct a local sensitivity analysis for the class of {\em parametric stochastic systems} that naturally arise as sets of {\em feasible solutions} to parameterised problems of stochastic optimization and related topics. The section is split into two subsections. The first subsection concerns a general class of stochastic constraint systems, while the second one deals with constraint systems coming from stochastic programming. Our sensitivity analysis addresses deriving efficient conditions for the fulfillment of {\em well-posedness properties} of stochastic constraint systems in the sense of establishing their {\em Lipschitzian stability} and 
{\em metric regularity} by using coderivative evaluations and coderivative characterizations of these well-posedness properties. \vspace*{-0.25in}

\subsection{\bf General stochastic constraint systems}\label{subsec:general constraint} \vspace*{-0.1in}

Given a set-valued normal integrand $\Phi: T\times \X \times\Z \tto \Y$ together with nonempty closed sets $\mathcal{K}\subseteq \Z$ and $\mathcal{O}\subseteq \X\times \Y$, consider the class of general parametric {\em stochastic constraint systems} $F:\X\tto \Z$ described by 
\begin{align}\label{def_mapF}
F(x):=\big\{ z\in \Z\;\big|\;\Intfset{\Phi}(x,z) \cap \mathcal{K}\neq \emptyset, \; (x,z)\in \mathcal{O}\big\},
\end{align} 
where $\Intfset{\Phi}(x,z)$ is a parameterized expected-integral multifunction defined as in \eqref{def:set-valued:Exp}. In the framework of \eqref{def_mapF},   the set $\mathcal{O}$ can be given, e.g., in the form
\begin{equation*}
\mathcal{O}: =\big\{ (x,z)  \in \X\times \Z\;\big|\;(x,z)\in C(t) \text{ for a.e. } t\in T \big\}
\end{equation*}
via some measurable multifunction $C: T\to \X\times\Z$. Integral representations for normal vectors to such sets and their investigation by using new {\em stochastic extremal principles} are presented in \cite{mp1}.\vspace*{0.03in} 

Here is a more specific example of the stochastic constraint systems arising in stochastic programming studied in the next subsection.\vspace*{-0.03in}    
 	
\begin{example}\label{examplePx}
Consider the following parameterized optimization problem:
\begin{align}\label{stoch}\tag{P$_x$}
\min\Intfset{f}(x,z)
\text{ subject to } \Intfset{g_i}(x,z) \leq 0,\; i=1,\ldots,s,\; z\in G(x),
\end{align}
where $f$ and $g_i$, $i=1,\ldots,s$, are normal integrands, and where $G$ is a set-valued mapping. 	Then for a given parameter $x$, the set of {\em feasible solution} to problem \eqref{stoch} can be represented in form  \eqref{def_mapF} with $\mathcal{K}:=\mathbb{R}^n_{-}$,  $\mathcal{O}:=\gph G$, and the integrand  $\Phi(t,x,z):=(g_1(t,x,z),\ldots,g_m(t,x,z))^\top $.
\end{example}\vspace*{-0.05in}

In general, {\em local sensitivity analysis} of parametric systems consists of evaluating a ({\em generalized}) {\em derivative} of the underlying solution map at the point in question and then using this calculation for making conclusions on {\em well-posedness} of the system in questions with respect to small perturbations of the solution point and the nominal parameter. In the case of \eqref{def_mapF}, our generalized derivative is the the (robust) {\em limiting coderivative} \eqref{lcod}, which gives us the {\em coderivative criteria} \eqref{cod-cr} and \eqref{mr} for the (robust) {\em Lipschitz-like} \eqref{lip} and {\em metric regularity} \eqref{metreg} properties, respectively. \vspace*{0.03in}

To proceed, observe that the parametric version of the standing assumptions in \eqref{convex_cond_set} for systems \eqref{def_mapF} at the reference point $(\ox, \oz) \in \gph F$ is formulated as follows: there exist $\rho>0$ and $\kappa \in\Leb^1(T,\R)$ such that   
\begin{equation}\label{convex_cond_set02} 
\begin{aligned}
\Phi_t(x,z) &\text{ is convex for all } (x,z)\in \mathbb{B}_\rho(\ox,\oz) \text{ and all } t\in T_{na},\\
\Phi_t(x,z) &\subseteq \kappa(t) \mathbb{B}_\rho \text{  for all } (x,z) \in \mathbb{B}_\rho(\ox,\oz) \text{ and all } t\in T,
\end{aligned}
\end{equation}	
where $T_{na }$ is the nonatomic part of the measure $\mu$ on $T$.\vspace*{0.05in}

The next theorem provides an efficient evaluation ({\em upper estimate}) of the limiting coderivative in terms of the given data of \eqref{def_mapF}, which is a bridge to the subsequent Lipschitzian stability and metric regularity results.\vspace*{-0.05in}
 	
\begin{theorem}\label{Theo_Sens_general} Let $F: \X \tto \Z$ be given in  \eqref{def_mapF}, and let $(\ox,\oz) \in \gph F$. In addition to \eqref{convex_cond_set02}, impose the following assumptions valid for all $\oy \in  \Intfset{\phi}(\ox, \oz) \cap \mathcal{K}$ and all $ \y \in \mathcal{S}_{\Phi} (\ox,\oz,\oy)$, where $\mathcal{S}_\Phi$ is defined in \eqref{mapping:SPHI} with $\Phi_t=\Phi_t(x,z)$:
\begin{enumerate}[label=\alph*)]
\item[\bf(a)]\label{Theo_Sens_general_a} The mapping $\mathcal{S}_\Phi$ is inner semicompact at $(\ox,\oz,\oy)$.
\item[\bf(b)]\label{Theo_Sens_general_b} The set-valued normal integrand $\Phi$ enjoys the integrable quasi-Lipschitzian property \eqref{Int_Lips_like_inq} around $(\ox,\oz,\ooy)$.
\item[\bf(c)] We have the constraint qualification conditions 
\begin{eqnarray}\label{Theo_Sens_generalQC02}
\begin{array}{ll}
\disp\Big[0\in \int_T D^\ast \Phi_t(\ox,\oz,\y(t)) (y^\ast) \dmu \text{ and } y^\ast \in N(\oy;\mathcal{K})\Big]\Longrightarrow y^\ast=0,\\
\disp\bigcup_{ y^\ast  \in 	N(\oy;\mathcal{K})}\left[\int_T D^\ast \Phi_t(\ox,\oz,\y(t))(y^\ast) \dmu\right]\bigcap\big(- N((\ox,\oz); \mathcal{O})\big)= \{0\}.
\end{array}
\end{eqnarray} 
\end{enumerate}
Then for all $z^\ast\in \Z$ and $x^\ast \in D^\ast F(\ox,\oz) (z^\ast)$ there exist $\oy\in \Intfset{\Phi}(\ox,\oz)\cap \mathcal{K}$, $y^\ast \in N(\oy;\mathcal{K})$, and $ \y \in \mathcal{S}_{ \Phi} (\ox,\oz,\oy)$ such that
\begin{align}\label{coder-CS}
\begin{pmatrix}
x^\ast\\ - z^\ast
\end{pmatrix}
\in\int_T D^\ast \Phi_t(x,z,\y(t)) (y^\ast) \dmu +N\big((\ox,\oz); \mathcal{O}\big).
\end{align}
\end{theorem}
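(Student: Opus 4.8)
The plan is to recognize \eqref{def_mapF} as a \emph{generalized constraint set} built from the parameterized expected-integral multifunction $(x,z)\mapsto\Intfset{\Phi}(x,z)$, to estimate the limiting normal cone to $\gph F$ by the standard coderivative calculus of variational analysis, and only at the very end to pass from $D^\ast\Intfset{\Phi}$ to the integral form via the Leibniz rule of Lemma~\ref{main.theorem}. Concretely, I would augment $\Intfset{\Phi}$ to the mapping $\Psi\colon\X\times\Z\tto\Y\times\X\times\Z$ given by $\Psi(x,z):=\Intfset{\Phi}(x,z)\times\{(x,z)\}$ and set $\Theta:=\mathcal{K}\times\mathcal{O}$. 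Then one checks directly from \eqref{def_mapF} that $\gph F=\{(x,z)\mid\Psi(x,z)\cap\Theta\neq\emptyset\}$, with reference intersection point $(\oy,\ox,\oz)$, where $\oy\in\Intfset{\Phi}(\ox,\oz)\cap\mathcal{K}$. The closedness of $\gph\Intfset{\Phi}$ and the convexity/boundedness in \eqref{convex_cond_set02} put us in position to apply the constraint-set coderivative rule at $(\ox,\oz)$.

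Next I would apply the limiting-normal-cone rule for such constraint sets. The uniform boundedness $\Intfset{\Phi}(x,z)\subseteq\int_T\kappa(t)\dmu\,\mathbb{B}$ from \eqref{convex_cond_set02} makes the fiber mapping $(x,z)\mapsto\Psi(x,z)\cap\Theta$ inner semicompact, so a \emph{single} intersection point $\oy$ survives the outer-limit process defining $D^\ast F$. An elementary computation for the augmented mapping gives
\[
D^\ast\Psi\big((\ox,\oz),(\oy,\ox,\oz)\big)\big(y^\ast,a^\ast,b^\ast\big)=D^\ast\Intfset{\Phi}(\ox,\oz,\oy)(y^\ast)+(a^\ast,b^\ast),
\]
and since $N\big((\oy,\ox,\oz);\Theta\big)=N(\oy;\mathcal{K})\times N\big((\ox,\oz);\mathcal{O}\big)$, the rule yields some $y^\ast\in N(\oy;\mathcal{K})$ with $(x^\ast,-z^\ast)\in D^\ast\Intfset{\Phi}(\ox,\oz,\oy)(y^\ast)+N((\ox,\oz);\mathcal{O})$. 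The two displays in assumption (c) are exactly the two halves of the qualification condition this rule needs: the first excludes a nonzero $y^\ast\in N(\oy;\mathcal{K})$ with $0\in D^\ast\Intfset{\Phi}(\ox,\oz,\oy)(y^\ast)$ (the preimage/transversality part), the second is the intersection qualification against $-N((\ox,\oz);\mathcal{O})$. Both may be stated in the computable upper-estimate form because $D^\ast\Intfset{\Phi}(\ox,\oz,\oy)(y^\ast)\subseteq\int_T D^\ast\Phi_t(\ox,\oz,\y(t))(y^\ast)\dmu$, and a qualification condition imposed on the larger set forces the one for the smaller set.

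To finish, I would invoke Lemma~\ref{main.theorem}(iii): its hypotheses are precisely assumptions (a) and (b), namely inner semicompactness of $\mathcal{S}_\Phi$ at $(\ox,\oz,\oy)$ and the integrable quasi-Lipschitzian property \eqref{Int_Lips_like_inq} around $(\ox,\oz,\ooy)$ for every $\ooy\in\mathcal{S}_\Phi(\ox,\oz,\oy)$. These produce a selection $\y\in\mathcal{S}_\Phi(\ox,\oz,\oy)$ with
\[
D^\ast\Intfset{\Phi}(\ox,\oz,\oy)(y^\ast)\subseteq\int_T D^\ast\Phi_t\big(\ox,\oz,\y(t)\big)(y^\ast)\dmu,
\]
and substituting this into the inclusion of the previous paragraph gives exactly \eqref{coder-CS}, with the required $\oy$, $y^\ast\in N(\oy;\mathcal{K})$, and $\y\in\mathcal{S}_\Phi(\ox,\oz,\oy)$.

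The hard part will not be this final assembly but the rigorous justification of the constraint-set rule in the present framework. I must move from the outer-limit definition of $D^\ast F(\ox,\oz)$ (a Painlev\'e--Kuratowski limit of regular coderivatives at graph-nearby points, cf.\ \eqref{Lim_repr_cod}) back to the base point $(\ox,\oz,\oy)$; this forces me to extract a convergent subsequence of intersection points using the boundedness in \eqref{convex_cond_set02}, and to verify that the qualification condition (c) is \emph{robust} along those sequences. Additional care is needed because (c) is phrased through the integral upper estimate rather than through $D^\ast\Intfset{\Phi}$ itself: one has to confirm that controlling the larger set validates every calculus step, and that the single point $\oy$ and selection $\y$ delivered at the end are consistent across the two distinct uses of inner semicompactness (the constraint-set rule on one hand, Lemma~\ref{main.theorem}(iii) on the other).
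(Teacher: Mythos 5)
Your proposal is correct and rests on the same two pillars as the paper's own proof---the limiting normal cone/coderivative calculus of \cite{m18} and the coderivative Leibniz rule of Lemma~\ref{main.theorem}(iii)---but it packages the calculus differently. The paper writes $\gph F=\Intfset{\Phi}^{-1}(\mathcal{K})\cap\mathcal{O}$ and proceeds in two steps: the intersection rule \cite[Theorem~2.16]{m18} under the qualification condition \eqref{Theo_Sens006}, followed by the preimage rule \cite[Corollary~3.13]{m18} for $N\big((\ox,\oz);\Intfset{\Phi}^{-1}(\mathcal{K})\big)$; notably, \eqref{Theo_Sens006} is verified \emph{a posteriori} from the integral estimate \eqref{Theo_Sens007} together with the second condition in \eqref{Theo_Sens_generalQC02}. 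You instead absorb $\mathcal{O}$ into the target set through a single preimage-rule application to the augmented mapping $\Psi(x,z):=\Intfset{\Phi}(x,z)\times\{(x,z)\}$ with $\Theta:=\mathcal{K}\times\mathcal{O}$. Your coderivative formula $D^\ast\Psi\big((\ox,\oz),(\oy,\ox,\oz)\big)(y^\ast,a^\ast,b^\ast)=D^\ast\Intfset{\Phi}(\ox,\oz,\oy)(y^\ast)+(a^\ast,b^\ast)$ checks out (the graph of $\Psi$ is a linear copy of $\gph\Intfset{\Phi}$, so regular normals transform exactly this way), and the combined kernel qualification for $\Psi$ does decompose into the two conditions of \eqref{Theo_Sens_generalQC02} once the integral upper estimate from Lemma~\ref{main.theorem}(iii) is inserted: the second condition forces $(a^\ast,b^\ast)=0$, after which the first forces $y^\ast=0$. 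What your route buys is one calculus application instead of two, with both halves of assumption (c) checked simultaneously rather than in the paper's slightly delicate order (estimate first, intersection qualification second); what it costs is the small extra verification of $D^\ast\Psi$ and of the product formula $N\big((\oy,\ox,\oz);\mathcal{K}\times\mathcal{O}\big)=N(\oy;\mathcal{K})\times N\big((\ox,\oz);\mathcal{O}\big)$, both routine. One correction to your closing worry: the ``hard part'' you flag is not actually open ground. The constraint-set rule you need is precisely \cite[Corollary~3.13]{m18} applied to $\Psi$---the very result the paper cites---and its inner-semicompactness hypothesis for $(x,z)\mapsto\Psi(x,z)\cap\Theta$ follows from the uniform boundedness in \eqref{convex_cond_set02} together with closedness of $\gph\Intfset{\Phi}$, so no fresh sequential extraction from the outer limit \eqref{Lim_repr_cod} is required, and the robustness of (c) along approximating sequences is handled inside the cited rule rather than by you.
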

\begin{proof} By \eqref{convex_cond_set02}, it follows from Lemma~\ref{Theo_Regular_coder00}(iii) that for all $\oy \in  \Intfset{\phi}(\ox, \oz)\cap\mathcal{K}$ we have the coderivative upper estimate
\begin{align}\label{Theo_Sens003}
D^\ast \Intfset{\Phi}(\ox,\oz,\oy)(y^\ast)\subseteq \bigcup\limits_{ \y \in \mathcal{S}_{ \Phi} (\ox,\oz,\oy)} \int_T D^\ast \Phi_t(\ox,\oz,\y(t)) (y^\ast)\dmu.
\end{align}
Observe further that for $\gph F:= \Intfset{\Phi}^{-1}\left(\mathcal{K} \right)\cap\mathcal{O})$
the coderivative inclusion  $x^\ast \in D^\ast F(\ox,\oz) (z^\ast)$ reduces to $(x^\ast, -z^\ast) \in N((\ox, \oz); \Intfset{\Phi}^{-1}\left(  \mathcal{K} \right)  \cap \mathcal{O})$. To verify the assertion of the theorem, we concentrate in what follows on deriving an upper estimate for the normal cone to the latter set. It follows from the basic normal cone intersection rule for closed sets in \cite[Theorem~2.16]{m18} that
\begin{align}\label{Theo_Sens008}
N\big((\ox, \oz);\Intfset{\Phi}^{-1}( \mathcal{K}))\cap \mathcal{O}\big)\subseteq N\big((\ox, \oz); \Intfset{\Phi}^{-1}(\mathcal{K})\big) + N\big((\ox, \oz);\mathcal{O}\big)
\end{align}
provided the fulfillment of the qualification condition
\begin{align}\label{Theo_Sens006}
N\big((\ox, \oz);\Intfset{\Phi}^{-1}\left(  \mathcal{K} \right)\big)\cap\big( -N((\ox, \oz);\mathcal{O})\big) =\{ 0\}.
\end{align}
On the other hand, we get from \cite[Corollary~3.13]{m18} that 
\begin{align}\label{Theo_Sens005}
N\big((\ox, \oz);\Intfset{\Phi}^{-1}\left(  \mathcal{K} \right)\big)  \subseteq\bigcup\Bigg[D^\ast \Intfset{\Phi} 	(\ox,\oz,\oy) (y^\ast)\;\Bigg|\:\begin{array}{c}
y^\ast \in N(\oy;\mathcal{K}), \\
 \oy \in \Intfset{\Phi} 	(\ox,\oz)    \cap \mathcal{K}
\end{array}\Bigg]
\end{align}
under the fulfillment of the qualification condition 
\begin{align*}
N(\oy; \mathcal{K})\cap \ker D^\ast \Intfset{\Phi}(\ox,\oz,\oy) =\{ 0\} \text{ for all } \oy \in \Intfset{\Phi}(\ox,\oz)\cap \mathcal{K},
\end{align*}
which readily follows from \eqref{Theo_Sens003} and the first qualification condition in \eqref{Theo_Sens_generalQC02}. Furthermore, we deduce 
from \eqref{Theo_Sens003} and \eqref{Theo_Sens005} that
\begin{align}\label{Theo_Sens007}
N\big((\ox, \oz);\Intfset{\Phi}^{-1}\left( \mathcal{K} \right)\big)  \subseteq \bigcup\Bigg[\int_T D^\ast \Phi_t(x,z,\y(t))(y^\ast) \dmu \;\Bigg|\;\begin{array}{c}
y^\ast \in N(\oy; \mathcal{K}), \\
\oy \in \Intfset{\Phi}(\ox,\oz)\cap \mathcal{K}\\
\y \in\mathcal{S}_{ \Phi} (\ox,\oz,\oy)
\end{array}\Bigg].
\end{align}
Using the latter together with the second qualification condition in \eqref{Theo_Sens_generalQC02} gives us \eqref{Theo_Sens006}. Combining finally
\eqref{Theo_Sens008} and \eqref{Theo_Sens007} verifies \eqref{coder-CS}. 
\end{proof}\vspace*{-0.05in}

As direct consequences of Theorem~\ref{Theo_Sens_general} and the coderivative characterizations \eqref{cod-cr} and \eqref{mr}, we get sufficient conditions for the Lipschitz-like and metric regularity properties of the stochastic constraint system  \eqref{def_mapF} expressed via the limiting normal cone and coderivative of the system data.\vspace*{-0.05in}

\begin{corollary}\label{Lipschitz_Like_F}
In the setting of Theorem~{\rm\ref{Theo_Sens_general}}, take arbitrary vectors
$y^\ast \in N(\oy;\mathcal{K})$, $\oy \in\Intfset{\Phi}(\ox,\oz)\cap \mathcal{K}$,  and $\y \in \mathcal{S}_{ \Phi} (\ox,\oz,\oy)$. The following assertions hold:
\begin{enumerate}
\item[\bf(i)] Assume the fulfillment of the implication 
\begin{equation}\label{Lipschitz_Like_F01}
\bigg[\begin{pmatrix}
x^\ast\\0
\end{pmatrix} \in\int_T D^\ast \Phi_t\big(\ox,\oz,\y(t)\big) (y^\ast)\dmu+N\big((\ox,\oz);\mathcal{O}\big)\bigg]\Longrightarrow x^\ast=0.
\end{equation}
Then the mapping $F$ from \eqref{def_mapF} is Lipschitz-like around $(\ox,\oz)$.\vspace*{0.03in}
\item[\bf(ii)] If the complemented implication
\begin{align}\label{metreg_F01}
\bigg[\begin{pmatrix}
0\\z^*
\end{pmatrix} \in\int_T D^\ast \Phi_t(\ox,\oz,\y(t)) (y^\ast)\dmu+N\big((\ox,\oz);\mathcal{O}\big)\bigg]\Longrightarrow z^\ast=0
\end{align}
is satisfied, then  $F$ is metrically regular around $(\ox,\oz)$.
\end{enumerate}
\end{corollary}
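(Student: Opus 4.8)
The plan is to obtain both assertions as immediate consequences of the coderivative upper estimate in Theorem~\ref{Theo_Sens_general} fed into the pointbased well-posedness characterizations recalled in Section~\ref{subsec:va}. All hypotheses of Theorem~\ref{Theo_Sens_general} are in force here, so for every $z^\ast\in\Z$ and every $x^\ast\in D^\ast F(\ox,\oz)(z^\ast)$ we are handed a triple $(\oy,y^\ast,\y)$ with $\oy\in\Intfset{\Phi}(\ox,\oz)\cap\mathcal{K}$, $y^\ast\in N(\oy;\mathcal{K})$, and $\y\in\mathcal{S}_\Phi(\ox,\oz,\oy)$ for which the membership \eqref{coder-CS} holds. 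The whole proof amounts to feeding this membership into the pointwise implications \eqref{Lipschitz_Like_F01} and \eqref{metreg_F01}, noting that those implications are assumed for \emph{arbitrary} admissible $y^\ast,\oy,\y$ and hence cover in particular whatever triple the existence statement of Theorem~\ref{Theo_Sens_general} supplies.

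For assertion (i), I would first invoke the Mordukhovich coderivative criterion \eqref{cod-cr}, by which $F$ is Lipschitz-like around $(\ox,\oz)$ if and only if $D^\ast F(\ox,\oz)(0)=\{0\}$. Accordingly, I take an arbitrary $x^\ast\in D^\ast F(\ox,\oz)(0)$, i.e., specialize \eqref{coder-CS} to $z^\ast=0$, which yields a triple $(\oy,y^\ast,\y)$ of the above type with
\[
\begin{pmatrix} x^\ast\\ 0\end{pmatrix}\in\int_T D^\ast \Phi_t\big(\ox,\oz,\y(t)\big)(y^\ast)\dmu+N\big((\ox,\oz);\mathcal{O}\big).
\]
Applying the assumed implication \eqref{Lipschitz_Like_F01} to this triple forces $x^\ast=0$. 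Since $x^\ast$ was arbitrary, we conclude $D^\ast F(\ox,\oz)(0)=\{0\}$, and the Lipschitz-like property follows from \eqref{cod-cr}.

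For assertion (ii), I would instead use the metric regularity characterization \eqref{mr}, stating that $F$ is metrically regular around $(\ox,\oz)$ exactly when $\ker D^\ast F(\ox,\oz)=\{0\}$, i.e., $0\in D^\ast F(\ox,\oz)(z^\ast)$ implies $z^\ast=0$. Thus I take $z^\ast$ with $0\in D^\ast F(\ox,\oz)(z^\ast)$ and apply \eqref{coder-CS} with $x^\ast=0$ to produce a triple $(\oy,y^\ast,\y)$ satisfying
\[
\begin{pmatrix} 0\\ -z^\ast\end{pmatrix}\in\int_T D^\ast \Phi_t\big(\ox,\oz,\y(t)\big)(y^\ast)\dmu+N\big((\ox,\oz);\mathcal{O}\big).
\]
Invoking \eqref{metreg_F01} for the vector $-z^\ast$ in its second slot then gives $-z^\ast=0$, hence $z^\ast=0$ and $\ker D^\ast F(\ox,\oz)=\{0\}$, so metric regularity follows.

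Since the statement is essentially a repackaging of Theorem~\ref{Theo_Sens_general} through the characterizations \eqref{cod-cr} and \eqref{mr}, I do not expect a genuine obstacle. The only technical care required is the sign bookkeeping in part~(ii): the theorem delivers $-z^\ast$ in the second coordinate whereas \eqref{metreg_F01} is written with $z^\ast$, a mismatch that is harmless once \eqref{metreg_F01} is read as a quantified implication over all vectors (one applies it to $-z^\ast$). The other point to keep explicit is that, because the corollary assumes \eqref{Lipschitz_Like_F01} and \eqref{metreg_F01} for arbitrary $(\oy,y^\ast,\y)$, the particular triple arising from the existence clause of Theorem~\ref{Theo_Sens_general} is automatically admissible, so no uniformity beyond what is already granted is needed.
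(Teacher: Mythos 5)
Your proof is correct and follows essentially the same route as the paper: feed the coderivative upper estimate of Theorem~\ref{Theo_Sens_general} into the characterizations \eqref{cod-cr} and \eqref{mr}, with the same sign bookkeeping in part (ii) that the paper leaves implicit under ``the proof of (ii) is similar.'' The only detail the paper includes that you omit is the preliminary observation that the standing assumptions make $F$ closed-graph around $(\ox,\oz)$, which is required for the criteria \eqref{cod-cr} and \eqref{mr} to apply.
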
\vspace*{-0.05in}
\begin{proof} Observe first that the imposed general assumptions ensure that the mapping $F$ from \eqref{def_mapF} is closed-graph around $(\ox,\oz)$. To verify (i), we need now checking by the coderivative criterion \eqref{cod-cr} that $ D^\ast F(\ox,\oz)(0)=\{0\}$ for $F$ in \eqref{def_mapF}. Pick any $x^\ast \in D^\ast F(\ox,\oz)(0)$ and then deduce from Theorem~\ref{Theo_Sens_general} that there exist $\oy \in\Intfset{\Phi}(\ox,\oz)\cap\mathcal{K}	$, 	$y^\ast \in N(\oy;\mathcal{K})$, and $ \y \in \mathcal{S}_{ \Phi}(\ox,\oz,\oy)$ such that
\begin{align*}
\begin{pmatrix}
x^\ast\\0
\end{pmatrix} \in\int_T D^\ast \Phi_t(x,z,\y(t)) (y^\ast) \dmu + 	N((\ox,\oz); \mathcal{O}).
\end{align*}
Employing \eqref{Lipschitz_Like_F01} tells us that $x^\ast =0$, which verifies (i). The proof of (ii) is similar by using the coderivative evaluation from Theorem~\ref{Theo_Sens_general} and the coderivative characterization of metric regularity given in \eqref{mr}.
\end{proof}\vspace*{-0.05in}

The next corollary of Theorem~\ref{Theo_Sens_general} addresses the case where $\Phi$ is single-valued and locally Lipschitzian around the reference point.\vspace*{-0.05in}
	
\begin{corollary}\label{Corollary_Cod_F01} For $F$ in \eqref{def_mapF}, assume that $\Phi$ is single-valued and locally Lipschitzian around $(\ox,\oz)$ and 
the following qualification conditions are satisfied:
\begin{align}\label{Cor_Sens_generalQC01}
\bigcup	\left[\int_T \partial \langle y^\ast, \Phi_t \rangle (\ox,\oz) \dmu\Bigg|\,y^\ast\in N\big(\Intfset{\Phi}(\ox,\oz);\mathcal{K}\big)\right]\bigcap \big(- N((\ox,\oz);\mathcal{O})\big)=\{ 0\},
\end{align}
\begin{align}\label{Cor_Sens_generalQC02} 
N\big(\Intfset{\Phi}(\ox,\oz);\mathcal{K}\big)\cap\ker\left[  \int_T\partial  \langle \cdot,  \Phi_t \rangle (\ox,\oz)\dmu\right] =\{0\}.
\end{align}
Then for all $z^\ast\in \Z$ we have the coderivative upper estimate 
\begin{align*}
D^\ast F(\ox,\oz) (z^\ast) \subseteq\Bigg\{ x^\ast\;\Bigg|\begin{array}{c}
\text{ there exist } y^\ast \in N(\Intfset{\Phi}(\ox,\oz);\mathcal{K}) \text{ such that } \\
\begin{pmatrix}x^\ast\\-z^\ast\end{pmatrix}\in \displaystyle \int_T \partial \langle y^\ast,\Phi_t \rangle (\ox,\oz) \dmu +N\big((\ox,\oz); \mathcal{O}\big) 
\end{array}\Bigg\}.\end{align*}
If in addition for any $y^\ast \in N(\Intfset{\Phi}(\ox,\oz);\mathcal{K})$ the implication
\begin{align}\label{cor-liplike}
\Bigg[\begin{pmatrix}
x^\ast\\ 0
\end{pmatrix}\in\displaystyle \int_T\partial \langle y^\ast,  \Phi_t \rangle (\ox,\oz)\dmu + 	N\big((\ox,\oz); \mathcal{O}\big)\Bigg]\Longrightarrow x^\ast=0
\end{align}
holds, then the mapping $F$ is Lipschitz-like around $(\ox,\oz)$. The replacement of \eqref{cor-liplike} by the complemented implication
\begin{align*}
\Bigg[\begin{pmatrix}
0\\z^*
\end{pmatrix}\in\displaystyle \int_T\partial \langle y^\ast,  \Phi_t \rangle (\ox,\oz)\dmu + 	N\big((\ox,\oz); \mathcal{O}\big)\Bigg]\Longrightarrow z^\ast=0
\end{align*}
ensures the metric regularity of $F$ around $(\ox,\oz)$.
\end{corollary}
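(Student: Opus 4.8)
The plan is to deduce this corollary from Theorem~\ref{Theo_Sens_general} by specializing its hypotheses and conclusion to the single-valued Lipschitzian case through the scalarization formula \eqref{scal}. First I would record the structural simplifications forced by single-valuedness. Since $\Phi$ is single-valued and locally Lipschitzian around $(\ox,\oz)$, the bounding condition in \eqref{convex_cond_set02} with $\kappa\in\Leb^1(T,\R)$ guarantees that $t\mapsto\Phi_t(\ox,\oz)$ is an integrable selection, so the Aumann integral $\Intfset{\Phi}(\ox,\oz)$ is the single point $\oy:=\int_T\Phi_t(\ox,\oz)\dmu$, and the solution set $\mathcal{S}_\Phi(\ox,\oz,\oy)$ from \eqref{mapping:SPHI} is the singleton $\{\y\}$ with $\y(t)=\Phi_t(\ox,\oz)$. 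Consequently assumption (a) of Theorem~\ref{Theo_Sens_general} (inner semicompactness of $\mathcal{S}_\Phi$) holds trivially, and every union over $\oy\in\Intfset{\Phi}(\ox,\oz)\cap\mathcal{K}$ and $\y\in\mathcal{S}_\Phi(\ox,\oz,\oy)$ collapses to a single term; in particular $N(\oy;\mathcal{K})$ becomes $N(\Intfset{\Phi}(\ox,\oz);\mathcal{K})$.

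Next I would verify assumption (b) of Theorem~\ref{Theo_Sens_general}, namely the integrable quasi-Lipschitzian property \eqref{Int_Lips_like_inq} around $(\ox,\oz,\y)$. Local Lipschitz continuity of the single-valued $\Phi$ yields the integrable locally Lipschitzian condition \eqref{eq_definition_Int_loc} in its single-valued form \eqref{eq:Lipsc}, so Theorem~\ref{Prop61}(ii) supplies $\ell\in\Leb^1(T,\R)$ with $\sup\{\|v^\ast\|:v^\ast\in D^\ast\Phi_t(x,z)(w^\ast)\}\le\ell(t)\|w^\ast\|$ near $(\ox,\oz)$, which is precisely the defining inequality of the quasi-Lipschitzian property evaluated along the selection $\y$. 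With (a) and (b) in force, applying the scalarization formula \eqref{scal} pointwise in $t$ rewrites each coderivative as $D^\ast\Phi_t(\ox,\oz)(y^\ast)=\partial\langle y^\ast,\Phi_t\rangle(\ox,\oz)$, so the integral terms $\int_T D^\ast\Phi_t(\ox,\oz,\y(t))(y^\ast)\dmu$ become $\int_T\partial\langle y^\ast,\Phi_t\rangle(\ox,\oz)\dmu$.

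Under this substitution the two constraint qualifications in \eqref{Theo_Sens_generalQC02} turn into exactly \eqref{Cor_Sens_generalQC02} and \eqref{Cor_Sens_generalQC01}, respectively, the union over $\oy$ having collapsed. Theorem~\ref{Theo_Sens_general} then delivers, for every $z^\ast\in\Z$ and $x^\ast\in D^\ast F(\ox,\oz)(z^\ast)$, some $y^\ast\in N(\Intfset{\Phi}(\ox,\oz);\mathcal{K})$ with $(x^\ast,-z^\ast)\in\int_T\partial\langle y^\ast,\Phi_t\rangle(\ox,\oz)\dmu+N((\ox,\oz);\mathcal{O})$, which is the claimed coderivative upper estimate. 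Finally, the Lipschitz-like and metric regularity assertions follow by feeding this estimate into the Mordukhovich criteria \eqref{cod-cr} and \eqref{mr}: setting $z^\ast=0$ and invoking implication \eqref{cor-liplike} forces $D^\ast F(\ox,\oz)(0)=\{0\}$, hence the Lipschitz-like property, while the complemented implication forces $\ker D^\ast F(\ox,\oz)=\{0\}$, hence metric regularity; equivalently, one may simply quote Corollary~\ref{Lipschitz_Like_F} with the scalarized integrands.

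The main obstacle I anticipate is the bookkeeping in the second step: confirming that ``single-valued and locally Lipschitzian'' is strong enough to yield the integrable locally Lipschitzian condition \eqref{eq_definition_Int_loc} with a genuinely integrable modulus $\ell(\cdot)$, so that Theorem~\ref{Prop61}(ii) and hence hypothesis (b) are legitimately available, and then checking that the scalarization \eqref{scal} produces the subdifferential appearing in \eqref{Cor_Sens_generalQC01}--\eqref{Cor_Sens_generalQC02} with respect to the correct joint variable $(x,z)\in\X\times\Z$, so that the $x$- and $z$-components of \eqref{coder-CS} split as written. The remaining steps are purely formal specializations once these identifications are in place.
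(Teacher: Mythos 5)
Your proposal is correct and follows essentially the same route as the paper's own proof: specializing Theorem~\ref{Theo_Sens_general} (and Corollary~\ref{Lipschitz_Like_F}) to the single-valued case via the scalarization formula \eqref{scal}, with the qualification conditions \eqref{Cor_Sens_generalQC01}--\eqref{Cor_Sens_generalQC02} translating into \eqref{Theo_Sens_generalQC02}. The paper dismisses the verification of hypotheses (a) and (b) with ``it is easy to see,'' whereas you spell it out (inner semicompactness from the collapse of $\mathcal{S}_\Phi$ to a singleton, cf.\ also Lemma~\ref{main.theorem}(iv), and the quasi-Lipschitz property via Theorem~\ref{Prop61}(ii)), correctly reading ``locally Lipschitzian'' as the integrable condition \eqref{eq_definition_Int_loc}, which is indeed the intended interpretation.
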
\vspace*{-0.05in}
\begin{proof} It is easy to see that, due to the assumed local single-valuedness and Lipschitz continuity of $\Phi$, assumptions (a) and (b) of Theorem~\ref{Theo_Sens_general} are satisfied. The qualification conditions \eqref{Cor_Sens_generalQC01} and \eqref{Cor_Sens_generalQC02} clearly yield those imposed in 
\eqref{Theo_Sens_generalQC02} by using the scalarization formula \eqref{scal}. 
The latter formula allows us to deduce the claimed coderivative inclusion from \eqref{coder-CS} of Theorem~\ref{Theo_Sens_general}, while the Lipschitz-like and metric regularity properties of $F$ follows from Corollary~\ref{Lipschitz_Like_F} under the imposed additional assumptions. 
\end{proof}\vspace*{-0.05in}

Let us present a direct consequence of Corollary~\ref{Corollary_Cod_F01} addressing stochastic problems with only {\em equality constraints}.\vspace*{-0.02in}

\begin{corollary}
In the setting of Corollary~{\rm\ref{Corollary_Cod_F01}}, consider the system 
$$
F(x):=\big\{ z\in \Z\;\big|\;\Intfset{\Phi}(x,z) =0\big\}$$ under the qualification condition
\begin{align*}
\ker \left[ \int_T D^\ast \Phi_t(\ox,\oz) (\cdot )\dmu\right]=\{0\}.
\end{align*}
Then for all $z^\ast\in \Z$  we have the coderivative upper estimate
\begin{align*}
D^\ast F(\ox,\oz) (z^\ast) \subseteq\Bigg\{ x^\ast \in \X\;\Bigg|\;\begin{array}{c}
\text{ there exits } y^\ast \in \Y \text{such that} \\
\begin{pmatrix}
x^\ast\\-z^\ast
\end{pmatrix}\in  \hspace{-0,1cm} \displaystyle \int_T D^\ast \Phi_t(\ox,\oz) (y^\ast) \dmu \end{array}\Bigg\}.
\end{align*}
The additional fulfillment of the implication
\begin{align*}
\Bigg[\begin{pmatrix}
x^\ast\\0
\end{pmatrix}\in\displaystyle \int_T D^\ast \Phi_t(\ox,\oz) (y^\ast)\dmu \Bigg]\Longrightarrow x^\ast=0\;\mbox{ as }\;y^*\in\Y
\end{align*}
ensures that $F$ is Lipschitz-like around $(\ox,\oz)$, while the implication
\begin{align*}
\Bigg[\begin{pmatrix}
z^\ast\\0
\end{pmatrix}\in\displaystyle \int_T D^\ast \Phi_t(\ox,\oz) (y^\ast)\dmu \Bigg]\Longrightarrow z^\ast=0,\quad y^*\in\Y,
\end{align*}
yields the metric regularity of $F$ around this point.
\end{corollary}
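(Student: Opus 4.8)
The plan is to derive this statement as the specialization of Corollary~\ref{Corollary_Cod_F01} corresponding to the choices $\mathcal{K}:=\{0\}\subseteq\Y$ and $\mathcal{O}:=\X\times\Z$. First I would note that, since $\Phi$ is single-valued under the standing hypotheses of Corollary~\ref{Corollary_Cod_F01}, the expected-integral mapping $\Intfset{\Phi}(x,z)$ is single-valued, so the requirement $\Intfset{\Phi}(x,z)\cap\{0\}\neq\emptyset$ in \eqref{def_mapF} is equivalent to $\Intfset{\Phi}(x,z)=0$. Thus the system $F$ considered here is exactly \eqref{def_mapF} with these $\mathcal{K}$ and $\mathcal{O}$, and the reference inclusion $(\ox,\oz)\in\gph F$ forces $\Intfset{\Phi}(\ox,\oz)=0$.

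Next I would evaluate the two normal cones appearing in Corollary~\ref{Corollary_Cod_F01}. Since $\mathcal{O}=\X\times\Z$ is the whole space, $N((\ox,\oz);\mathcal{O})=\{0\}$; since $\mathcal{K}=\{0\}$ is a singleton and $\Intfset{\Phi}(\ox,\oz)=0$, we have $N(\Intfset{\Phi}(\ox,\oz);\mathcal{K})=N(0;\{0\})=\Y$. With these evaluations in hand, I would verify that the qualification conditions \eqref{Cor_Sens_generalQC01} and \eqref{Cor_Sens_generalQC02} reduce as required. Condition \eqref{Cor_Sens_generalQC01} is automatic: its right factor is $-N((\ox,\oz);\mathcal{O})=\{0\}$, and the union on the left always contains the origin (take $y^\ast=0$), so the intersection equals $\{0\}$. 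For \eqref{Cor_Sens_generalQC02}, the scalarization formula \eqref{scal} yields $\partial\langle y^\ast,\Phi_t\rangle(\ox,\oz)=D^\ast\Phi_t(\ox,\oz)(y^\ast)$, so that the multifunction $\int_T\partial\langle\cdot,\Phi_t\rangle(\ox,\oz)\dmu$ coincides with $\int_T D^\ast\Phi_t(\ox,\oz)(\cdot)\dmu$; together with $N(\Intfset{\Phi}(\ox,\oz);\mathcal{K})=\Y$, condition \eqref{Cor_Sens_generalQC02} becomes precisely the imposed assumption $\ker[\int_T D^\ast\Phi_t(\ox,\oz)(\cdot)\dmu]=\{0\}$.

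With the hypotheses of Corollary~\ref{Corollary_Cod_F01} checked, I would then substitute the computed data into its conclusion. The coderivative estimate there places $(x^\ast,-z^\ast)$ in $\int_T\partial\langle y^\ast,\Phi_t\rangle(\ox,\oz)\dmu+N((\ox,\oz);\mathcal{O})$ for some $y^\ast\in N(\Intfset{\Phi}(\ox,\oz);\mathcal{K})$; inserting $N((\ox,\oz);\mathcal{O})=\{0\}$, $N(\Intfset{\Phi}(\ox,\oz);\mathcal{K})=\Y$, and the scalarization identity produces exactly the claimed estimate with $y^\ast$ ranging freely over $\Y$. The same substitution turns implication \eqref{cor-liplike} into the stated Lipschitz-like condition, and turns the metric-regularity implication of Corollary~\ref{Corollary_Cod_F01} into the stated one (the pertinent pair being $(0,z^\ast)$, obtained by setting $x^\ast=0$ in the estimate); both well-posedness conclusions then follow from the coderivative criteria \eqref{cod-cr} and \eqref{mr} exactly as in the parent corollary.

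Because every step is a direct specialization, I expect no genuinely hard part. The only points demanding care are the evaluation of the normal cone to the singleton target, namely $N(0;\{0\})=\Y$ (this is what allows the multiplier $y^\ast$ to run over the whole dual space rather than a proper subcone), and the observation that the first qualification condition \eqref{Cor_Sens_generalQC01} becomes vacuous once $\mathcal{O}$ is the entire space $\X\times\Z$.
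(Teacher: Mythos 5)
Your proposal is correct and follows exactly the paper's route: the paper's own proof is the one-line specialization of Corollary~\ref{Corollary_Cod_F01} with $\mathcal{K}=\{0\}$ and $\mathcal{O}=\X\times\Z$, and you have simply carried out the routine verifications (namely $N(0;\{0\})=\Y$, $N((\ox,\oz);\X\times\Z)=\{0\}$, and the scalarization identity \eqref{scal}) that the paper leaves implicit. Your reading of the metric-regularity implication with the pair $(0,z^\ast)$ is also the intended one, matching the parent corollary and silently correcting an apparent transposition in the statement.
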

\begin{proof} It follows from Corollary~\ref{Corollary_Cod_F01} with  $\mathcal{O}=\X\times \Z$ and $\mathcal{K}=\{0\}$. 
\end{proof}
\vspace*{-0.35in}

\subsection{\bf Constraint systems in stochastic programming}\label{subsec:constraint-program}\vspace*{-0.05in}

In this subsection we provide a coderivative-based local sensitivity analysis for feasible solution maps in problems of {\em stochastic programming} of 
type \eqref{stoch} that are described as follows:
\begin{align}\label{F_Inq1}
F(x):=\big\{ z\in \Z\;\big|\;\Intfset{\Phi}(x,z)\leq 0,\;z\in G(x)\big\},
\end{align}
where $\Phi(t,x,z):=(g_1(t,x,z), \ldots, g_m(t,x,z))^\top $ with some normal integrands $g_1, \ldots,g_m: \X\times \Z \to \R\cup\{ +\infty\}$ and a set-valued mapping $G:\X\tto \Z$.\vspace*{0.03in}

Our main goal in this subsection is to evaluate the coderivative \eqref{lcod} of \eqref{F_Inq1}, which leads us as in Subsection~\ref{subsec:general constraint} us sufficient conditions the Lipschitz-like and metric regularity properties of this mapping due to their coderivative characterizations. For brevity, we address here only the Lipschitz-like property of  the constraint system \eqref{F_Inq1} and its specifications.\vspace*{0.03in}

First we present the following proposition, which is a consequence of Theorem~\ref{Theo_Sens_general} and Corollary~\ref{Lipschitz_Like_F} for the case of feasible solution maps \eqref{F_Inq1}.\vspace*{-0.05in}

\begin{proposition}\label{Corollary_Cod_F012} Let $F$ be given in  \eqref{F_Inq1}, where $g_i$, $i=1,\ldots,m$, are locally Lipschitzian around $(\ox,\oz)\in\gph F$, and where the graph of $G$ is locally closed around this point. Suppose that for all $(-x^*,z^*)\in\gph D^*G(\ox,\oz)$ we have the two constraint  qualification conditions:
\begin{align}\label{Cor_SensQC00} \Bigg[\begin{pmatrix}
x^\ast\\ z^\ast 
\end{pmatrix} &\in \bigcup\left(\int_T \partial \langle  y^\ast,\Phi_t \rangle (\ox,\oz) \dmu\,\Bigg|\,y^\ast \in N(\Intfset{\Phi}\big(\ox,\oz);\Y_{-}\big)\right)\Bigg]\\ \nonumber
&\Longrightarrow \begin{pmatrix}
x^\ast\\ z^\ast 
\end{pmatrix}  =\begin{pmatrix}
0\\ 0 
\end{pmatrix},
\end{align}
\begin{align}\label{Cor_SensQC01}\Bigg[\begin{pmatrix}
0 \\ 0 
\end{pmatrix} \in   \int_T \partial \langle y^\ast,  \Phi_t \rangle (\ox,\oz)   \dmu,\;y^\ast \in N\big(\Intfset{\Phi}(\ox,\oz);\Y_{-}\big) \Bigg]\Longrightarrow y^\ast=0.
\end{align}
Then for all $z^\ast\in \Z$  the coderivative upper estimate
\begin{align}\label{FormulaCorollary45}
D^\ast F(\ox,\oz) (z^\ast) \subseteq\Bigg\{ x^\ast \in \X\;\Bigg|\begin{array}{c}
\text{ there exist }   y^\ast \in N\big(\Intfset{\Phi}(\ox,\oz);\Y_{-}\big)\\ \text{ and  } -u^\ast \in D^\ast G(\ox,\oz)(v^\ast)   \text{ with}  \\
\begin{pmatrix}
x^\ast+u^\ast \\-z^\ast -v^\ast 
\end{pmatrix}\in  \displaystyle \int_T \partial \langle y^\ast,  \Phi_t \rangle (\ox,\oz)  \dmu \end{array}\Bigg\}
\end{align}
holds. If in addition the following implication  
\begin{align}\label{Cor_SensQC03}
\Bigg[\begin{pmatrix}
x^\ast+u^\ast \\-v^\ast 
\end{pmatrix} \in\displaystyle \int_T  \partial \langle y^\ast,  \Phi_t \rangle (\ox,\oz)  \dmu\Bigg]  \Longrightarrow x^\ast=0
\end{align}
is satisfied for all  $-u^\ast \in D^\ast G(\ox,\oz)(v^\ast)$ and if also $y^\ast \in N(\Intfset{\Phi}(\ox,\oz);\Y_{-})$ holds for all  $-u^\ast \in D^\ast G(\ox,\oz)(v^\ast) $  and  $y^\ast \in N(\Intfset{\Phi}(\ox,\oz);\Y_{-})$,
then the feasible solution map $F$ is Lipschitz-like around the reference point $(\ox,\oz)$.
\end{proposition}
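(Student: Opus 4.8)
The plan is to recognize the feasible solution map \eqref{F_Inq1} as the specialization of the general stochastic constraint system \eqref{def_mapF} obtained by setting $\mathcal{K}:=\Y_{-}=\mathbb{R}^m_{-}$ and $\mathcal{O}:=\gph G$, and then to invoke Corollary~\ref{Corollary_Cod_F01}. Indeed, since each component $g_i$ is locally Lipschitzian around $(\ox,\oz)$, the integrand $\Phi=(g_1,\ldots,g_m)^\top$ is single-valued and locally Lipschitzian there, while the local closedness of $\gph G$ makes $\mathcal{O}$ locally closed; in particular, as observed in the proof of Corollary~\ref{Corollary_Cod_F01}, assumptions (a) and (b) of Theorem~\ref{Theo_Sens_general} hold automatically. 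With $\Phi$ single-valued, the requirement $\Intfset{\Phi}(x,z)\le 0$ is exactly $\Intfset{\Phi}(x,z)\in\mathcal{K}$, so \eqref{F_Inq1} truly has the form \eqref{def_mapF}. It then remains only to check the two qualification conditions \eqref{Cor_Sens_generalQC01} and \eqref{Cor_Sens_generalQC02} required by Corollary~\ref{Corollary_Cod_F01}.

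The key translation step is to rewrite the limiting normal cone to $\mathcal{O}=\gph G$ in terms of the coderivative of $G$ via \eqref{lcod}: a pair $(u^\ast,w^\ast)$ lies in $N((\ox,\oz);\gph G)$ precisely when $u^\ast\in D^\ast G(\ox,\oz)(-w^\ast)$, so that the vectors $(x^\ast,z^\ast)$ belonging to $-N((\ox,\oz);\gph G)$ are exactly those with $(-x^\ast,z^\ast)\in\gph D^\ast G(\ox,\oz)$. Under this identification the first general condition \eqref{Cor_Sens_generalQC01}, asserting triviality of the intersection of the union of integral subdifferentials with $-N((\ox,\oz);\mathcal{O})$, becomes precisely \eqref{Cor_SensQC00} once it is quantified over all $(-x^\ast,z^\ast)\in\gph D^\ast G(\ox,\oz)$. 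The second condition \eqref{Cor_Sens_generalQC02} does not involve $\mathcal{O}$ at all and reduces verbatim to the kernel condition \eqref{Cor_SensQC01}. Hence both hypotheses of Corollary~\ref{Corollary_Cod_F01} are in force.

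Applying Corollary~\ref{Corollary_Cod_F01} now yields, for every $z^\ast\in\Z$ and every $x^\ast\in D^\ast F(\ox,\oz)(z^\ast)$, the existence of $y^\ast\in N(\Intfset{\Phi}(\ox,\oz);\Y_{-})$ with $(x^\ast,-z^\ast)\in\int_T\partial\langle y^\ast,\Phi_t\rangle(\ox,\oz)\dmu+N((\ox,\oz);\gph G)$. Decomposing the normal-cone summand through the equivalence $(u^\ast,w^\ast)\in N((\ox,\oz);\gph G)\Leftrightarrow u^\ast\in D^\ast G(\ox,\oz)(-w^\ast)$ and relabelling its two components then converts this inclusion into the displayed estimate \eqref{FormulaCorollary45}. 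This sign bookkeeping, matching $N((\ox,\oz);\gph G)$ against the graph of $D^\ast G$ under the convention fixed in \eqref{lcod}, is the only genuinely delicate point of the argument; everything else is a direct specialization, so I do not expect any substantive obstacle beyond confirming that the qualification conditions and the normal cone translate consistently.

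Finally, for the Lipschitz-like conclusion I would appeal to the Mordukhovich coderivative criterion \eqref{cod-cr}, according to which it suffices to verify $D^\ast F(\ox,\oz)(0)=\{0\}$. Taking any $x^\ast\in D^\ast F(\ox,\oz)(0)$ and feeding $z^\ast=0$ into \eqref{FormulaCorollary45} produces $y^\ast\in N(\Intfset{\Phi}(\ox,\oz);\Y_{-})$ and $-u^\ast\in D^\ast G(\ox,\oz)(v^\ast)$ with $(x^\ast+u^\ast,-v^\ast)\in\int_T\partial\langle y^\ast,\Phi_t\rangle(\ox,\oz)\dmu$; the assumed implication \eqref{Cor_SensQC03} then forces $x^\ast=0$. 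Thus $D^\ast F(\ox,\oz)(0)=\{0\}$, and \eqref{cod-cr} delivers the Lipschitz-like property of $F$ around $(\ox,\oz)$, completing the argument.
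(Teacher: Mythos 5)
Your proposal is correct and takes essentially the same route as the paper, whose entire proof of this proposition is the one-line specialization of Corollary~\ref{Corollary_Cod_F01} to the data $\mathcal{K}:=\Y_{-}$, $\mathcal{O}:=\gph G$, and $\Phi(t,x,z):=(g_1(t,x,z),\ldots,g_m(t,x,z))^\top$. Your extra work --- translating $N((\ox,\oz);\gph G)$ into the graph of $D^\ast G(\ox,\oz)$ via \eqref{lcod} to match the qualification conditions \eqref{Cor_SensQC00}--\eqref{Cor_SensQC01} with \eqref{Cor_Sens_generalQC01}--\eqref{Cor_Sens_generalQC02}, and running the coderivative criterion \eqref{cod-cr} with $z^\ast=0$ for the Lipschitz-like conclusion --- merely makes explicit the sign bookkeeping that the paper leaves to the reader.
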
\vspace*{-0.05in}
\begin{proof} This follows from Corollary~{\rm\ref{Corollary_Cod_F01}} with the sets $\mathcal{K}:=\mathbb{R}^n_{-}$,  $\mathcal{O}:=\gph G$, and the normal integrand  $\Phi(t,x,z):=(g_1(t,x,z),\ldots,g_m(t,x,z))^\top $.
\end{proof}\vspace*{-0.1in}

Specifying further assumptions on the initial data of the stochastic
program \eqref{stoch} from Example~\ref{examplePx}, we present now efficient conditions ensuring the fulfillment of the major qualification condition \eqref{Cor_SensQC01} in Proposition~\ref{Corollary_Cod_F012}.\vspace*{-0.05in}

\begin{example}\label{ex-revisited} Under the notation of Example~\ref{examplePx}, consider the feasible solution map $F$ associated with  \eqref{stoch} and suppose that the integrands $g_i$ are locally Lipschitzian around $(\ox,\oz)$ with $\partial g_i (t,\ox,\oz) \subseteq \X_+ \times \Z_+ \backslash \{ 0\}$ for a.e. $t\in T$. Then we claim that the qualification condition  \eqref{Cor_SensQC01} holds. To check this, pick any $y^\ast\in  N(\Intfset{\Phi}(\ox,\oz);\Y_+)$ satisfying the condition 
\begin{equation}\label{exa}
0 \in  \int_T \partial \langle y^\ast,\Phi_t \rangle (\ox,\oz)\dmu
\end{equation}
with $\Phi(t,x,z)=(g_1(t,x,z), \ldots,g_m(t,x,z))^\top $. It is easy to see that all components of $y^\ast=(y^\ast_1, y^\ast_2, \ldots, y^\ast_m)$ are nonnegative. Using the subdifferential sum rule from \cite[Theorem~2.19]{m18}, we get that 
$$ 
\partial \langle y^\ast,  \Phi_t \rangle (\ox,\oz)  \subseteq \sum_{i=1}^m y_i^\ast \partial g_i(t,\ox,\oz) \text{ for a.e. } t\in T,
$$
which yields by \eqref{exa} the inclusion
$$
0 \in\disp\sum_{i=1}^m y_i^\ast \int_{T} \partial g_i(t,\ox,\oz) \dmu.
$$ 
Since $\partial g_i (t,\ox,\oz) \subseteq \X_+ \times \Z_+ \backslash \{ 0\}$, we verify the claim that $y^\ast=0$.
\end{example}\vspace*{-0.05in}

For the next result we need to recall another subdifferential construction for extended-real-valued functions $f\colon\X\to\oR$ finite at $x$ that is known as the {\em regular subdifferential} of $f$ at $x$ and is defined by
\begin{equation}\label{rsub}
\Hat\partial f(x):=\Big\{x^*\in\X\;\Big|\;\liminf_{u\to x}\frac{f(u)-f(x)-\la x^*,u-x\ra}{\|u-x\|}\ge 0\Big\}.
\end{equation}
If $f$ is l.s.c.\ around $x$, then the subdifferential \eqref{sub} admits the limiting representation via \eqref{pk}, where `$u\st{f}{\to}x$' indicates that $u\to x$ with $f(u)\to f(x)$:
\begin{equation}\label{lim-reg}
\partial f(x)=\Limsup_{u\st{f}{\to}x}\Hat\partial f(u),
\end{equation}

The following lemma establishes relationships between {\em full} and {\em partial subdifferentials} for functions of two variables. This new result is certainly of its own interest, while it is needed in our subsequent coderivative analysis of the feasible solution map \eqref{F_Inq1} in stochastic programming. \vspace*{-0.05in}
 
\begin{lemma}\label{LemmaPartialSub}
Let $g: \X\times \Z \to \R\cup \{ +\infty\}$ be an l.s.c.\ function such that $g(\ox,\cdot)$ is convex. Then we have $z^\ast \in {\partial}_z g(\ox,\oz)$ whenever $ (x^\ast,z^\ast ) \in \Hat{\partial} g(\ox,\oz) $. If in addition $g(x',\cdot)$ is convex and $g(\cdot, z')$ is continuous at $\bar x$ for all $(x',z')$ sufficiently close to $(\ox,\oz)$, then $z^\ast\in{\partial}_z g(\ox,\oz)$ whenever $ (x^\ast,z^\ast ) \in {\partial} g(\ox,\oz) $. 
\end{lemma}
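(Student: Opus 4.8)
The plan is to handle the two statements in turn: the first by a direct restriction argument straight from the definition \eqref{rsub}, and the second by feeding that first statement into the limiting representation \eqref{lim-reg} and passing to the limit in a convex subgradient inequality.

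First I would prove the regular-subdifferential claim. Take $(x^\ast,z^\ast)\in\Hat\partial g(\ox,\oz)$ and write out the defining inequality \eqref{rsub} for the function $g$ of the pair $(x,z)$. The key observation is that the $\liminf$ there runs over all $(u,w)\to(\ox,\oz)$, so restricting to the slice $u\equiv\ox$ can only increase it. Along this slice $\|(\ox,w)-(\ox,\oz)\|$ reduces to $\|w-\oz\|$ and the term $\la x^\ast,u-\ox\ra$ vanishes, so the difference quotient becomes exactly the one defining the regular subdifferential of the convex function $g(\ox,\cdot)$ at $\oz$. Hence $z^\ast\in\Hat\partial_z g(\ox,\oz)$; since $g(\ox,\cdot)$ is convex and l.s.c., its regular and limiting subdifferentials both coincide with the convex subdifferential, which gives $z^\ast\in\partial_z g(\ox,\oz)$.

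For the limiting-subdifferential claim I would start from $(x^\ast,z^\ast)\in\partial g(\ox,\oz)$ and invoke \eqref{lim-reg}: there are $(u_k,w_k)\st{g}{\to}(\ox,\oz)$ and $(x_k^\ast,z_k^\ast)\in\Hat\partial g(u_k,w_k)$ with $(x_k^\ast,z_k^\ast)\to(x^\ast,z^\ast)$. Because the additional hypothesis guarantees that $g(u_k,\cdot)$ is convex for all large $k$, I can apply the already-established first part at each point $(u_k,w_k)$ to obtain $z_k^\ast\in\partial_z g(u_k,w_k)$, which by convexity is the subgradient inequality
\begin{equation*}
g(u_k,w)\ge g(u_k,w_k)+\la z_k^\ast,w-w_k\ra\quad\text{for all }w\in\Z.
\end{equation*}

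The final step is to pass to the limit as $k\to\infty$. On the right-hand side, $g(u_k,w_k)\to g(\ox,\oz)$ by the very definition of $\st{g}{\to}$ convergence, while $z_k^\ast\to z^\ast$ and $w_k\to\oz$ force the linear term to converge, so the right-hand side tends to $g(\ox,\oz)+\la z^\ast,w-\oz\ra$. The delicate point, which I expect to be the main obstacle, is the left-hand side: lower semicontinuity of $g$ yields only $\liminf_k g(u_k,w)\ge g(\ox,w)$, which is the wrong direction. This is precisely where the continuity of $g(\cdot,w)$ at $\ox$ enters, giving $g(u_k,w)\to g(\ox,w)$ and hence $\liminf_k g(u_k,w)=g(\ox,w)$ for every $w$ close to $\oz$. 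Taking the limit then produces $g(\ox,w)\ge g(\ox,\oz)+\la z^\ast,w-\oz\ra$ for all $w$ near $\oz$. Since continuity is assumed only locally, this inequality is obtained merely on a neighborhood of $\oz$; I would finish by globalizing it through convexity of $g(\ox,\cdot)$ (testing along segments $\oz+t(w-\oz)$ with $t\downarrow 0$), thereby concluding that $z^\ast\in\partial_z g(\ox,\oz)$.
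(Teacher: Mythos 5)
Your proposal is correct and takes essentially the same route as the paper: for the first claim you restrict to the slice $x=\ox$ (the paper carries out the resulting $\epsilon$--segment convexity argument by hand, where you instead invoke the standard coincidence of the regular and convex subdifferentials of a convex function), and for the second claim you use the limiting representation \eqref{lim-reg}, apply the first part along the approximating sequence, pass to the limit in the subgradient inequality using the continuity of $g(\cdot,z')$ at $\ox$ --- exactly the point where the paper uses it --- and globalize the resulting local inequality on $\mathbb{B}_\gamma(\oz)$ via convexity of $g(\ox,\cdot)$. You even correctly identify the delicate step (lower semicontinuity alone gives $\liminf_k g(u_k,w)\ge g(\ox,w)$, the wrong direction), which is precisely why the continuity hypothesis appears in the lemma.
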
\vspace*{-0.05in}
\begin{proof} 
Pick any $(x^\ast,z^\ast )\in \Hat{\partial} g(\ox,\oz)$ and fix an arbitrary number $\epsilon>0$. Using definition \eqref{rsub}, find $\eta >0$ such that for all $(x,z) \in \mathbb{B}_\eta (\ox,\oz)$ we have
\begin{align*}
\langle x^\ast , x- \ox \rangle + \langle z^\ast, z- \oz\rangle\leq g(x,z) - g(\ox,\oz) + \epsilon( \| x-\ox\| + \| z-\oz\| ).
\end{align*}
Fix $z'\in \Z$ and employ the above inequality with  $x:=\ox$ and $z: = \oz + \alpha( z'-\oz )$ for $\alpha>0$ sufficiently small. Then we get
\begin{align*}
\alpha  \langle z^\ast, z'- \oz\rangle &\leq g(\ox,\oz + \alpha ( z'-\oz )) - g(\ox,\oz) + \epsilon\alpha (   \| z'-\oz\| )\\
&\leq \alpha \left( g(\ox,z')  - g(\ox,\oz) + \epsilon\| z' - z\| \right),
\end{align*}
where the latter inequality is due to the convexity of $g$ with respect to $z$. Therefore, for all $\epsilon>0$ and $z'\in \Z$ we obtain that
\begin{align*}
\langle z^\ast, z'- \oz\rangle 
&\leq g(\ox,z')  - g(\ox,\oz) + \epsilon\| z' - z\|.
\end{align*}
Since this holds for each $\epsilon>0$, it tells us that $z^\ast \in \Hat{\partial}_z g(\ox,\oz) $, and hence
$$
(x^\ast,z^\ast ) \in \Hat{\partial} g(\ox,\oz)\Longrightarrow z^\ast \in  \Hat{\partial}_z g(\ox,\oz).
$$  
To prove the last statement of the lemma, take any $(x^\ast,z^\ast ) \in  {\partial} g(\ox,\oz)$ and then find by \eqref{lim-reg} sequences $(x^\ast_k ,z_k^\ast) \in \Hat{\partial } g(x_k,z_k)$ with $(x_k,z_k,x^\ast_k ,z_k^\ast) \overset{g}{\to} ( \ox,\oz,x^\ast,z^\ast)$ as $k\to\infty$. This gives us for all $k \in \N$ that
\begin{align}\label{Ineqlemma46}
\langle z_k^\ast, z'- z_k\rangle\le (x_k,z')-g(x_k,z_k)\text{ whenever } z'\in \Z.
\end{align}
Choose $ \gamma >0 $ such that for all $z'\in \mathbb{B}_\gamma (\oz)$ the function $g(\cdot, z')$ is continuous at $\ox$. Passing to the limit as $k\to\infty$ in \eqref{Ineqlemma46}, we conclude that  
\begin{align*} 
\langle z^\ast, z'- \oz \rangle  
\le g(\ox,z')  - g(\ox,\oz) \text{ for all } z'\in \mathbb{B}_\gamma (\oz),
\end{align*}
which ensures by the assumed convexity of $g(\ox,\cdot)$ that $z^\ast \in \partial_z g(\ox,\oz)$. This therefore completes the proof of the lemma.
\end{proof}\vspace*{-0.1in}

Now we are ready to obtain {\em explicit conditions} on the initial data of \eqref{F_Inq1} supporting  the coderivative evaluation and the Lipschitz-like property of the feasible solution map associated with the stochastic program \eqref{stoch}.  
\vspace*{-0.05in}

\begin{theorem}\label{Corollary46}
Let $F$ be given in  \eqref{F_Inq1}, where the functions $g_i$ are locally Lipschitzian around $(\ox,\oz)\in\gph F$ being convex with respect to the second variable, and where $G$ is of closed graph. Assume also that:\vspace*{-0.05in}
\begin{enumerate}[label=\alph*)]
\item[\bf(a)] The set $G(\ox)$ is convex, while the set $\gph G$ is normally regular at $(\ox,\oz)$.
\item[\bf(b)] There exits a vector $z_0 \in G(\ox) \backslash \{ \oz\}$ such that we have $\Intfset{g_i}(\ox,z_0) < 0$ for all $i=1,\ldots, m$ and that $\Intfset{g_i}(\ox,\oz)=0$.
\end{enumerate}\vspace*{-0.05in}
Then the coderivative upper estimate \eqref{FormulaCorollary45} holds. If in addition $G$ is Lipschitz-like around $(\ox,\oz)$, then $F$ enjoys this property around $(\ox,\oz)$.
\end{theorem}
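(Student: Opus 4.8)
The strategy is to verify the hypotheses of Proposition~\ref{Corollary_Cod_F012}, since the desired coderivative estimate \eqref{FormulaCorollary45} and the Lipschitz-like conclusion are precisely its output. Thus the entire burden reduces to checking the two constraint qualification conditions \eqref{Cor_SensQC00} and \eqref{Cor_SensQC01} from the more explicit assumptions (a) and (b) of the present theorem, together with the assumed Lipschitz-like property of $G$. First I would record the elementary structural facts: since each $g_i$ is locally Lipschitzian around $(\ox,\oz)$, the vector integrand $\Phi=(g_1,\ldots,g_m)^\top$ is single-valued and locally Lipschitzian, so the scalarization formula \eqref{scal} applies and $\langle y^\ast,\Phi_t\rangle=\sum_i y^\ast_i g_i(t,\cdot,\cdot)$. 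Because the $g_i$ are convex in $z$, so is $\langle y^\ast,\Phi_t\rangle(\ox,\cdot)$ whenever the multipliers $y^\ast_i\ge 0$, and the normal cone geometry $N(\Intfset{\Phi}(\ox,\oz);\Y_{-})$ forces exactly this sign on the active components.

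\textbf{Deriving the qualification conditions.} The key step is to convert the Slater-type condition (b) into \eqref{Cor_SensQC01}. Suppose $y^\ast\in N(\Intfset{\Phi}(\ox,\oz);\Y_{-})$ satisfies $0\in\int_T\partial\langle y^\ast,\Phi_t\rangle(\ox,\oz)\dmu$; I must show $y^\ast=0$. Applying Lemma~\ref{LemmaPartialSub} to each $g_i$ (which is l.s.c.\ and convex in $z$, with $g_i(\cdot,z')$ continuous near $\ox$ by local Lipschitzness), every full-subgradient inclusion projects onto a partial-in-$z$ subgradient, so the vanishing integral yields $0\in\partial_z\Intfset{\langle y^\ast,\Phi\rangle}(\ox,\oz)$, i.e.\ $z\mapsto\sum_i y^\ast_i\,\Intfset{g_i}(\ox,z)$ attains its minimum over $\Z$ at $\oz$. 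Evaluating this convex function at the Slater point $z_0$ and using $y^\ast_i\ge 0$, $\Intfset{g_i}(\ox,z_0)<0$, and $\Intfset{g_i}(\ox,\oz)=0$ gives
\begin{equation*}
0\le\sum_{i=1}^m y^\ast_i\,\Intfset{g_i}(\ox,z_0)\le 0,
\end{equation*}
whence $\sum_i y^\ast_i\,\Intfset{g_i}(\ox,z_0)=0$, forcing $y^\ast_i\,\Intfset{g_i}(\ox,z_0)=0$ for each $i$, and since $\Intfset{g_i}(\ox,z_0)<0$ we conclude $y^\ast=0$. This is the heart of the argument.

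\textbf{The remaining pieces.} For \eqref{Cor_SensQC00} I would use assumption (a): convexity of $G(\ox)$ together with normal regularity of $\gph G$ at $(\ox,\oz)$ lets me represent $N((\ox,\oz);\gph G)$ via the regular normal cone, and combined with the Slater interior condition (b) — which guarantees that the feasible set has nonempty relative interior along $G(\ox)$ — the only multiplier $y^\ast$ producing a subgradient with a zero $z$-component that is admissible is $y^\ast=0$, yielding $(x^\ast,z^\ast)=(0,0)$ as required. Finally, with both qualification conditions in force and $G$ Lipschitz-like around $(\ox,\oz)$, the coderivative estimate \eqref{FormulaCorollary45} holds by Proposition~\ref{Corollary_Cod_F012}, and the implication \eqref{Cor_SensQC03} — which under the Lipschitz-like property of $G$ translates into the Mordukhovich criterion $D^\ast G(\ox,\oz)(0)=\{0\}$ being compatible with the multiplier system — delivers the Lipschitz-like property of $F$.

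\textbf{Main obstacle.} I expect the delicate point to be the passage from the \emph{full} subdifferential inclusion to the \emph{partial} one in $z$: this is exactly what Lemma~\ref{LemmaPartialSub} is designed to supply, but one must verify its continuity hypothesis $g_i(\cdot,z')$ continuous at $\ox$ (immediate from local Lipschitzness) and ensure the limiting-subdifferential integration commutes correctly with the projection, so that $0\in\int_T\partial\langle y^\ast,\Phi_t\rangle\dmu$ genuinely forces the partial-minimality of the \emph{integrated} objective rather than merely a pointwise statement. Handling the interplay between the integral and the partial-subdifferential projection, and confirming that the Slater point may be exploited inside the expected-integral functional, will be the technically sensitive portion of the proof.
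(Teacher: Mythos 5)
Your overall route is the same as the paper's: reduce everything to Proposition~\ref{Corollary_Cod_F012} and verify the qualification conditions \eqref{Cor_SensQC00}, \eqref{Cor_SensQC01}, \eqref{Cor_SensQC03} from assumptions (a) and (b). Your verification of \eqref{Cor_SensQC01} is correct and is essentially the paper's argument: the paper implements your ``partial minimality of the integrated objective'' by taking integrable selections $(\x^\ast(t),\z^\ast(t))\in\partial\langle y^\ast,\Phi_t\rangle(\ox,\oz)$ via Lemma~\ref{lemma_measurability_reg_sub} and the measurable selection theorem, splitting by the subdifferential sum rule into $\sum_i y_i^\ast(\v_i^\ast(t),\w_i^\ast(t))$, projecting each $\w_i^\ast(t)$ into $\partial_z g_i(t,\ox,\oz)$ by Lemma~\ref{LemmaPartialSub}, and integrating the pointwise convex subgradient inequality at $z_0$ (together with the complementarity $y^\ast_i=0$ for inactive $i$, which you should state rather than leave implicit). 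This is exactly the commutation issue you flagged, handled as you anticipated.

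The genuine gap is in your treatment of \eqref{Cor_SensQC00}, and it recurs inside \eqref{Cor_SensQC03}. There the pair $(x^\ast,z^\ast)\in\int_T\partial\langle y^\ast,\Phi_t\rangle(\ox,\oz)\dmu$ is \emph{not} assumed to vanish, so your minimization-at-$\oz$ argument yields nothing, and your substitute --- that (b) provides ``nonempty relative interior along $G(\ox)$'' and that only multipliers producing ``a subgradient with a zero $z$-component'' are admissible --- is not the paper's mechanism and does not close the argument, since $z^\ast$ is generically nonzero here. The missing idea is a \emph{second} application of Lemma~\ref{LemmaPartialSub}, this time to the indicator function of $\gph G$: assumption (a) (normal regularity of $\gph G$, so limiting normals are regular, and convexity of $G(\ox)$, so the indicator is convex in $z$ at the reference point) lets one pass from $(-x^\ast,-z^\ast)\in N((\ox,\oz);\gph G)$ to $-z^\ast\in N(\oz;G(\ox))$, whence the sign condition $\langle z^\ast,z_0-\oz\rangle\ge 0$. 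Pairing this against the integrated partial subgradients gives
\begin{align*}
0\le\langle z^\ast,z_0-\oz\rangle=\sum_{i=1}^m y_i^\ast\int_T\langle\w_i^\ast(t),z_0-\oz\rangle\dmu\le\sum_{i\in I(\ox,\oz)}y_i^\ast\,\Intfset{g_i}(\ox,z_0),
\end{align*}
where $I(\ox,\oz)$ is the active index set, and the strict Slater inequalities then force $y^\ast=0$, hence $(x^\ast,z^\ast)=(0,0)$ by the original inclusion. The same sign condition is what makes your Mordukhovich-criterion step for \eqref{Cor_SensQC03} function: it first yields $y^\ast=0$, so $x^\ast+u^\ast=0$ and $v^\ast=0$, and only then does the Lipschitz-likeness of $G$, via $D^\ast G(\ox,\oz)(0)=\{0\}$, give $u^\ast=0$ and finally $x^\ast=0$. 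Without this second use of Lemma~\ref{LemmaPartialSub} your proof establishes only \eqref{Cor_SensQC01}, which is insufficient for the coderivative estimate \eqref{FormulaCorollary45} and the Lipschitzian stability conclusion.
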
\vspace*{-0.1in}
\begin{proof} Consider the set of active inequality constraint indices $I(\ox,\oz):=\{i\in \{1,\ldots, m\}\;|\;\Intfset{g_i}(\ox,\oz)=0\}$ for $F$ at $(\ox,\oz)$ and take $-x^\ast \in D^\ast G(\ox,\oz)(z^\ast) $ and $y^\ast=(y_1^\ast, \ldots,y_m^\ast)^\top \in N(\Intfset{\Phi}(\ox,\oz);\Y_{-})$ such that 
\begin{equation}\label{cod-int1}
\begin{pmatrix}
x^\ast\\ z^\ast 
\end{pmatrix}\in\int_T \partial \langle  y^\ast,  \Phi_t \rangle (\ox,\oz) \dmu.
\end{equation}
Due to Lemma~\ref{lemma_measurability_reg_sub} and the measurable selection theorem discussed in Subsection~2.2, as well as by the uniform boundedness  subdifferential mappings generated by locally Lipschitzian functions, there exists an integrable selection $ (\x^\ast(t), \z^\ast (t) )^\top 
\in\partial \langle  y^\ast, \Phi_t \rangle (\ox,\oz) $ for a.e. $t\in T$ represented by
$$
(x^\ast, z^\ast)^\top = \int_T\big(\x^\ast(t),\z^\ast (t)\big)^\top \dmu.
$$
Using the limiting subdifferential sum rule and employing again the measurable selection theorem allow us to find integrable functions $(\v_i^\ast(t),\w_i^\ast(t))^\top\in\partial g_i(t,\ox,\oz)$ for a.e. $t\in T$ and all $i=1,\ldots, m$ satisfying the equality
\begin{align*}
(\x^\ast(t),\z^\ast (t) ) = \sum_{i=1}^{m } y_i^\ast  (\v_i^\ast(t),\w_i^\ast(t))\text{ for a.e. } t\in T.
\end{align*}
Lemma~\ref{LemmaPartialSub} tells us that $\w^\ast_i(t)\in \partial_z g_i(\ox,\oz)$ whenever $i=1,\ldots,m$. Furthermore, we have that $y^\ast_i = 0$ for all $i\notin I(\ox,\oz)$. Picking now any $i\in I(\ox,\oz)$ and taking into account the assumptions in (b) of the theorem ensure that
\begin{align}\label{SLATER}
\int_{ T  } \langle \w_i^\ast(t), z_0 -\oz  \rangle \dmu \leq  \Intfset{g_i}(z_0,\ox) <0,\quad i\in I(\ox,\oz).
\end{align}
On the other hand, we apply Lemma~\ref{LemmaPartialSub} to the indicator function of the set $\gph G$ by using the assumptions in (a) of the theorem. This gives us $-z^\ast\in N(\oz;G(\ox))$ and therefore ensures that
\begin{align*}
0\leq \langle z^\ast  , z_0 - \oz \rangle =   \sum_{i=1}^my_i^\ast \int_{ T  } \langle \w_i^\ast(t), z_0 -\oz  \rangle\mu(dt) \leq   \sum_{i\in I(\ox,\oz)}	 y_i^\ast  \Intfset{g_i}(z_0,\ox).  
\end{align*} 
It follows from \eqref{SLATER} that $y^\ast_i =0$ for all $i\in I(\ox,\oz)$, and consequently  $y^\ast=0$ for the whole vector $y^*$, which implies by \eqref{cod-int1} that $(x^\ast, z^\ast)=(0,0)$. This allows us to conclude that the qualification conditions \eqref{Cor_SensQC00} and \eqref{Cor_SensQC01} are satisfied, and thus we deduce the coderivative estimate \eqref{FormulaCorollary45} from Proposition~\ref{Corollary_Cod_F012}.

To verify next the fulfillment of implication \eqref{Cor_SensQC03}, pick any vectors $y^\ast\in N(\Intfset{\Phi}(\ox,\oz);\Y_{-})$ and $-u^\ast\in D^\ast G(\ox,\oz)(v^\ast) $ satisfying \begin{align*} 
\begin{pmatrix}	x^\ast+u^\ast \\-v^\ast 
\end{pmatrix}\in \displaystyle \int_T  \partial \langle y^\ast,  \Phi_t \rangle (\ox,\oz)\dmu.
\end{align*}
The above arguments readily show that $y^\ast=0$, and therefore $x^\ast + u^\ast=0$ and $v^\ast=0$. Thus the imposed assumption that $G$ is Lipschitz-like around $(\ox,\oz)$ yields by the coderivative criterion \eqref{cod-cr} that $u^\ast=0$ and consequently $x^\ast=0$. The claimed assertion on the Lipschitz-like property of $F$ around $(\ox,\oz)$ is due to Proposition~\ref{Corollary_Cod_F012}. This completes the proof of the theorem.\end{proof}\vspace*{-0.1in}

To conclude this section, we consider stochastic constraint systems  \eqref{F_Inq1}, which are generated by {\em polyhedral} set-valued mappings $G$, where the coderivative evaluation and Lipschitzian stability can be deduced from Theorem~\ref{Corollary46}.\vspace*{-0.05in}

\begin{example} Given integrable functions $a_i : T  \to \Z $ and normal integrands $b_i : T\times \X \to \R$ for $i=1,\ldots,m$,  define the constraint mapping
\begin{equation}\label{semi}
F(x):=\bigg\{  z  \in \Z\;\bigg|\;\Big\langle \int_T a_i(t)\dmu ,z\Big\rangle \leq \Intfset{b_i}(x) \text{ and } z\in G(x)\bigg\},
\end{equation}
where $G:\X\tto \Z$ is a polyhedral convex multifunction. Constraint mappings of type \eqref{semi} are known as  {\em semilinear stochastic parametric systems}. Assume that there exists a vector $z_0 \in G(\ox)\backslash \{ \oz\}$ such that
$$
\Big\langle \int_T a_i(t)\dmu ,z_0\Big\rangle < \Intfset{b_i}(\ox)\;\mbox{ for all }\;i=1,\ldots,m.
$$
Recalling the classical result from \cite{ww} (see also \cite[Theorem~3C.3]{dr}) establishing the Lipschitz continuity of polyhedral convex multifunctions, we conclude from Theorem~\ref{Corollary46} that the semilinear constraint mapping \eqref{semi} is Lipschitz-like around $(\ox,\oz)$. Furthermore, it follows from the above theorem that for any $x^\ast \in D^\ast F(\ox,\oz)(-z^\ast)$ there exist multipliers $\lambda_i \geq 0$ for $i=1,\ldots,m$ as well as $u^\ast \in D^\ast G(\ox,\oz)(-v^\ast)$ and an integrable selection $\x^\ast(t) \in \partial b_t(\ox)$  such that 
\begin{align*}
0&=\lambda_i\left(\bigg\langle \int_T a_i(t)\dmu ,z\bigg\rangle - \Intfset{b_i}(x)\right) \text{ for all } i=1,\ldots,m,\\
z^\ast + v^\ast &=\sum_{ i=1}^m \lambda_i \int_T a_i(t)\dmu,\;\mbox{ and }\;
x^\ast + u^\ast = \sum_{ i=1}^m \lambda_i\int_T \x^\ast(t)\dmu.
\end{align*}
\end{example}\vspace*{-0.25in}

\section{Sensitivity analysis for stochastic variational systems}\label{Sensitivity_variational}\vspace*{-0.1in}\setcounter{equation}{0}

This section is devoted to a local sensitivity analysis of the class of parametric stochastic systems given by
\begin{align}\label{def_mapS}
S(x):=\big\{z \in\Z\;\big|\;0\in\Intfset{\Phi}(x,z)+G(x,z)\big\},
\end{align}
where $\Phi: T\times \X \times\Z \tto \Y$ is a set-valued normal integrand, and where  $G: \X\times \Z\tto \Y$  is a closed-graph multifunction. Similarly  to the deterministic case \cite{m06}, we label systems of type \eqref{def_mapS} as {\em stochastic variational systems}.  This name comes from the fact that in many situations the framework of \eqref{def_mapS} describes optimality conditions in problems of stochastic optimization, variational inequality, complementarity systems, etc. Let us illustrate this by the following rather general example.\vspace*{-0.05in}

\begin{example}\label{example_VI}
Consider a  probability space $(\Omega, \mathcal{A},\mathbb{P})$, a set-valued normal integrand $\Phi: \Omega\times \X\times\Z\tto\Z$, and closed convex set $\emp\ne K\subseteq\Z$. Given $(\omega,x)\in\Omega\times \X$,  define the parameterized {\em stochastic variational inequality} by
\begin{equation}\label{VI_example}
\text{find }z \in K\;\text{ such that }\;\langle \Phi_\omega(x, z) , y-z \rangle \geq 0\;\mbox{ for all }\;y \in K.
\end{equation}
The set of solutions in the expected value formulation of \eqref{VI_example} can be represented in the form of \eqref{def_mapS} as
\begin{align}\label{svi}
S(x):=\big\{z\in \Z\;\big|\;0\in\Intfset{\Phi}(x,z)+N(z;K)\big\},
\end{align}
i.e., with $G(x,z):=N(z;K)$. The reader can find more details about such stochastic systems in\cite{cwz,gor} with the discussions and references therein.
\end{example}\vspace*{-0.03in}

Our goal here is to provide a coderivative-based sensitivity analysis of stochastic variational systems of type \eqref{def_mapS} with establishing efficient upper estimates of the limiting coderivative of $S(x)$ and applying  these estimates to deriving sufficient conditions for Lipschitzian stability and metric regularity of such systems. Both of these well-posedness properties can be simultaneously derived from coderivative calculations for general stochastic systems \eqref{def_mapS} similarly to the device of Section~\ref{Sensitivity_analysis_parametric_constraint}. However, there exists a  {\em significant difference} between constraint and variational systems: in the major cases of \eqref{def_mapS} where $G(x,z)$ is given via subdifferential and/or normal cone mappings (like, e.g., in the case of variational inequalities \eqref{svi}) the {\em metric regularity fails}, i.e., the coderivative-based sufficient conditions for the metric regularity of such systems are not satisfied. The reader can find more details for deterministic variational systems of this type in \cite{m08} and \cite[Section~3.3]{m18}. Having this in mind, we establish below the corresponding results for both Lipschitz-like and metric regularity property of general systems \eqref{def_mapS}, while address only Lipschitzian stability of stochastic variational inequalities of type \eqref{svi}.\vspace*{0.03in}

To proceed, let us introduce the \emph{adjoint generalized equation} to \eqref{def_mapS} at a reference point $(\ox,\oz,\oy)$ defined as
\begin{equation}\label{adj_gen_eq}
0 \in \bigcup\limits_{ \y \in \mathcal{S}_{ \Phi} (\ox,\oz,\oy)  } \int_T D^\ast \Phi_t\big(\ox,\oz,\y(t)\big) (y^\ast) \dmu + D^\ast G(\ox,\oz,-\oy)(y^\ast).
\end{equation}\vspace*{-0.1in}

\begin{theorem}\label{Theorem:Parametric:main}
Let  $S: \X \tto \Z$ be given in  \eqref{def_mapS} with  $\Phi: T\times \X\times \Z \tto \Y$ and $G: \X\times \Z\tto \Y$, and let $\mathcal{S}_{ \Phi}$ be taken from \eqref{mapping:SPHI} with $\Phi_t=\Phi_t(x,z)$. Pick $(\ox,\oz) \in \gph S$ and suppose that $\Phi$ satisfies \eqref{convex_cond_set02}, and that for all $\oy \in  -G(\ox,\oz)\cap \Intfset{\Phi}(\ox,\oz)$ and all  $ \y \in \mathcal{S}_{ \Phi} (\ox,\oz,\oy)$ we have the conditions:
\vspace*{-0.02in}
\begin{enumerate}[label=\alph*)]
\item[\bf(a)] \label{Theo_Para_general_a} The mapping $\mathcal{S}_\Phi$ is inner semicompact at $(\ox,\oz,\oy)$.
\item[\bf(b)]\label{Theo_Para_general_b}  $\Phi$ possesses the integrable quasi-Lipschitz condition \eqref{Int_Lips_like_inq} around $(\ox,\oz,\ooy)$.
\end{enumerate}\vspace*{-0.02in}
Then for every $z^\ast \in \Z$ and $x^\ast \in  D^\ast S(\ox,\oz) (z^\ast)$ there exist  $y^\ast \in \Y$ and $\oy \in -G(\ox,\oz)\cap \Intfset{\Phi}(\ox,\oz)$  such that
\begin{align*}
\left(\begin{array}{c}
x^\ast \\ -z^\ast 
\end{array} \right) &\in  D^\ast G(\ox,\oz,-\oy)(y^\ast) +  \bigcup\limits_{ \y \in \mathcal{S}_{ \Phi} (\ox,\oz,\oy)  } \int_T D^\ast \Phi_t(\ox,\oz,\y(t)) (y^\ast) \dmu 
\end{align*}
provided that for all $\oy \in -G(\ox,\oz)\cap \Intfset{\Phi} (\ox,\oz)$ the adjoint generalized equation \eqref{adj_gen_eq} admits only the trivial
solution $y^\ast=0$.
\end{theorem}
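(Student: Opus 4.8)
The plan is to reduce the stochastic generalized equation \eqref{def_mapS} to a sequence of three calculus operations whose outputs we already control: the coderivative of a solution map viewed as a preimage, the coderivative sum rule, and the Leibniz rule of Lemma~\ref{main.theorem}. Throughout I write $\Psi:=\Intfset{\Phi}+G\colon\X\times\Z\tto\Y$, so that $\gph S=\{(x,z)\mid 0\in\Psi(x,z)\}$. The crucial preliminary observation is that, under \eqref{convex_cond_set02} together with assumptions (a) and (b), Theorem~\ref{LipLikeEphi} guarantees that the expected-integral multifunction $\Intfset{\Phi}$ is Lipschitz-like around $(\ox,\oz,\oy)$ for every $\oy\in-G(\ox,\oz)\cap\Intfset{\Phi}(\ox,\oz)$. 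This single fact both ensures that $\Psi$ has a locally closed graph near the reference point and, more importantly, renders the subsequent coderivative sum rule free of any extra qualification condition.

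First I would express the solution map as a preimage and differentiate it geometrically. Writing $L:=(\X\times\Z)\times\{0\}\subseteq(\X\times\Z)\times\Y$, one has $\gph S\times\{0\}=\gph\Psi\cap L$, and since $N((\ox,\oz,0);L)=\{0\}\times\{0\}\times\Y$, the basic intersection rule for normal cones \cite[Theorem~2.16]{m18} yields, for any $(x^\ast,-z^\ast)\in N((\ox,\oz);\gph S)$, the existence of $y^\ast\in\Y$ with $(x^\ast,-z^\ast)\in D^\ast\Psi(\ox,\oz,0)(y^\ast)$. By the defining relation \eqref{lcod} this means that $x^\ast\in D^\ast S(\ox,\oz)(z^\ast)$ forces $(x^\ast,-z^\ast)\in D^\ast\Psi(\ox,\oz,0)(y^\ast)$ for some $y^\ast$, while the qualification condition demanded by the intersection rule reads precisely $0\in D^\ast\Psi(\ox,\oz,0)(y^\ast)\Rightarrow y^\ast=0$.

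Next I would decompose $D^\ast\Psi(\ox,\oz,0)$ through the two remaining rules. Because $\Intfset{\Phi}$ is Lipschitz-like at the point in question, the coderivative sum rule from \cite{m18} applies without any qualification hypothesis and produces, summing over the decompositions $0=\oy+(-\oy)$ with $\oy\in\Intfset{\Phi}(\ox,\oz)\cap(-G(\ox,\oz))$, the inclusion $D^\ast\Psi(\ox,\oz,0)(y^\ast)\subseteq\bigcup_{\oy}\big[D^\ast\Intfset{\Phi}(\ox,\oz,\oy)(y^\ast)+D^\ast G(\ox,\oz,-\oy)(y^\ast)\big]$. Invoking then Lemma~\ref{main.theorem}(iii), which is legitimate under exactly the standing assumption \eqref{convex_cond_set02} and hypotheses (a) and (b), to replace each term $D^\ast\Intfset{\Phi}(\ox,\oz,\oy)(y^\ast)$ by its upper estimate $\bigcup_{\y\in\mathcal{S}_{\Phi}(\ox,\oz,\oy)}\int_T D^\ast\Phi_t(\ox,\oz,\y(t))(y^\ast)\dmu$ delivers the asserted membership for $(x^\ast,-z^\ast)$.

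Finally I would close the qualification loop, which I expect to be the main technical point. The two previous steps show that $D^\ast\Psi(\ox,\oz,0)(y^\ast)$ is contained in the right-hand side whose vanishing defines the adjoint generalized equation \eqref{adj_gen_eq}; consequently, the hypothesis that \eqref{adj_gen_eq} admits only the trivial solution $y^\ast=0$ implies the intersection-rule qualification $0\in D^\ast\Psi(\ox,\oz,0)(y^\ast)\Rightarrow y^\ast=0$ invoked in the first step. The delicate issue is precisely this direction of the implication: since the Leibniz rule supplies only an upper estimate, i.e.\ an inclusion rather than an equality, one must check that triviality of the enlarged adjoint equation still dominates the exact qualification condition, which it does because the containment runs the favorable way. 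Assembling the three steps then yields, for the given $x^\ast\in D^\ast S(\ox,\oz)(z^\ast)$, the claimed existence of $y^\ast\in\Y$ and $\oy\in-G(\ox,\oz)\cap\Intfset{\Phi}(\ox,\oz)$ realizing the desired coderivative estimate.
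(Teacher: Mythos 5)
Your argument is correct and reaches the paper's conclusion, but it routes through a genuinely different calculus decomposition. The paper represents $\gph S$ as the preimage $H^{-1}(\gph\Intfset{\Phi})$ with $H(x,z):=\big(x,z,-G(x,z)\big)$ and applies the preimage rule \cite[Corollary~3.13]{m18}, whose qualification condition $N\big((\ox,\oz,\oy);\gph\Intfset{\Phi}\big)\cap\ker D^\ast H(\ox,\oz,\oy)=\{0\}$ is verified by combining the triviality of the adjoint equation \eqref{adj_gen_eq} with the Lipschitz-like property of $\Intfset{\Phi}$ from Theorem~\ref{LipLikeEphi} (the latter needed there to force $(x^\ast,z^\ast)=0$ once $y^\ast=0$). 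You instead form the sum $\Psi:=\Intfset{\Phi}+G$, recover $D^\ast S$ from $D^\ast\Psi$ via the intersection rule \cite[Theorem~2.16]{m18} applied to $\gph\Psi\cap\big((\X\times\Z)\times\{0\}\big)$, and then split $D^\ast\Psi$ with the coderivative sum rule; both routes finish identically with the Leibniz rule of Lemma~\ref{main.theorem}(iii), and your remark that the upper estimate runs the favorable direction for the qualification check is exactly the paper's logic too. Your variant is arguably cleaner at the qualification stage, since your intersection-rule condition involves only $y^\ast$, so the Lipschitz-like property enters through the sum rule rather than through the final implication. The price is that the sum rule carries hypotheses you pass over too quickly: it requires local closedness of $\gph\Intfset{\Phi}$ and inner semicompactness of the splitting map $(x,z,y)\mapsto\big\{(y_1,y_2)\;\big|\;y_1\in\Intfset{\Phi}(x,z),\;y_2\in G(x,z),\;y_1+y_2=y\big\}$ at $\big((\ox,\oz),0\big)$, and neither of these follows from the Lipschitz-like property, contrary to what you assert. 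Both do follow from the standing condition \eqref{convex_cond_set02}: the integrable bound $\Phi_t(x,z)\subseteq\kappa(t)\mathbb{B}_\rho$ makes $\Intfset{\Phi}$ locally bounded (whence the splitting map is inner semicompact once graphs are closed), and graph closedness of $\Intfset{\Phi}$ comes from a Fatou-type argument using the convexity of values on the nonatomic part; so you should credit \eqref{convex_cond_set02} for these two points and reserve Theorem~\ref{LipLikeEphi} solely for the sum-rule qualification $D^\ast\Intfset{\Phi}(\ox,\oz,\oy)(0)=\{0\}$. With those attributions repaired, your proof is complete and of the same rigor as the paper's.
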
\vspace*{-0.05in}
\begin{proof}
Observe that $\gph S=H^{-1}(\gph \Intfset{\Phi } )$, where $H(x,z):=(x,y, -G(x,z))$. For $w:=(\ox,\oz,\oy)\in H(\ox,\oz)$ and $w^\ast: =(x^\ast,z^\ast,y^\ast)$ we easily get that
\begin{align}\label{codH}
D^\ast H((\ox,\oy),w)(w^\ast)&= (x^\ast,0) + (0,z^\ast) +D^\ast(-g)(\ox,\oz,-\oy)(y^\ast)\\\nonumber
&=  \left( \begin{array}{c}
x^\ast \\ z^\ast 
\end{array} \right)+D^\ast g(\ox,\oz,-\oy)( -y^\ast).
\end{align}
Using \eqref{codH} and calculating the normal cone to $H^{-1}(\gph \Intfset{\Phi })$ by \cite[Corollary~3.13]{m18} tell us that $N((\ox, \oz); \gph S ) $ is contained in the set 
\begin{align}\label{Theo_Par005}
\bigcup \hspace{-0.1cm}\left[ \left( \begin{array}{c}
x^\ast \\ z^\ast 
\end{array} \right)\hspace{-0.1cm}+ \hspace{-0.1cm}D^\ast G(\ox,\oz,-\oy)(y^\ast)\;\Bigg|\;\begin{array}{c}
\left( \begin{array}{c}
x^\ast \\ z^\ast 
\end{array} \right)  \in D^\ast \Intfset{\Phi}( \ox,\oz,\oy )(y^\ast), \\
\oy \in  -G(\ox,\oz)\cap \Intfset{\Phi}(\ox,\oz)
\end{array} 	\right]
\end{align}
provided that for   all $\oy \in -G(\ox,\oz) \cap \Intfset{\Phi}(\ox,\oz)$ we have the qualification condition
\begin{align*}
N\big((\ox,\oz,\oy);\gph \Intfset{\Phi}\big)\cap \ker  D^\ast H	(\ox,\oz,\oy) =\{ 0\}. 
\end{align*}
Let us check that the latter conditions holds if the adjoint generalized equation \eqref{adj_gen_eq} admits only the trivial solution  $y^\ast=0$ for all $\oy \in -G(\ox,\oz)\cap \Intfset{\Phi} (\ox,\oz)$. Indeed, taking any $(x^\ast,z^\ast,-y^\ast ) \in N((\ox,\oz,\oy); \gph \Intfset{\Phi})$ and using the limiting coderivative definition lead us to $D^*H(\ox,\oz,\oy)(x^*,z^*,-z^*)=0$. This yields by \eqref{codH} and the coderivative Leibniz rule from Theorem~\ref{Theo_Regular_coder00}(iii) the fulfillment of the adjoint generalized equation \eqref{adj_gen_eq}, and thus $y^\ast=0$ by our assumption. It follows from Theorem~\ref{LipLikeEphi}(a) under the imposed  integrable quasi-Lipschitzian assumption on $\Phi$ that $(x^\ast, z^\ast)=0$. To complete the proof of this theorem, it remains to apply again the coderivative Leibniz rule  from Theorem~\ref{Theo_Regular_coder00}(iii) but now to the coderivative on the right-hand side of \eqref{Theo_Par005}.\end{proof}\vspace*{-0.1in}

The next theorem addresses a special setting of the stochastic variational system \eqref{def_mapS}, where the set-valued mapping $G$ does not depend on $x$, and where the integrand $\Phi_t(x,z)$ is single-valued and locally Lipschitzian around the reference point. Besides the coderivative evaluation for the solution map $S$ in this case, we derive now sufficient conditions expressed via the given system data for each of the well-posedness properties under consideration: the Lipschitz-like and metric regularity ones. \vspace*{-0.05in} 

\begin{theorem}\label{Corollary:Parametric:main}
Let $S:\X \tto \Z$ be given in  \eqref{def_mapS}, and let $(\ox,\oz) \in \gph S$. Suppose that $\Phi_t$ is single-valued and integrably locally Lipschitzian   around $(\ox,\oz)$, and that $G$ does not depend on $x$. Denoting $\oy:=\Intfset{\Phi}(\ox,\oz)$ and  picking any $x^\ast \in  D^\ast S(\ox,\oz) (z^\ast)$, we claim that there exists  $y^\ast \in \Y $  such that  
\begin{align}\label{EQ01_Cor_Par}
\begin{pmatrix}
x^\ast \\ -z^\ast 
\end{pmatrix} \hspace{-0.09cm} \in  \displaystyle \hspace{-0.09cm}\int_T\partial \langle y^\ast, \Phi_t \rangle (\ox,\oz) \dmu  \hspace{-0.09cm}+ \hspace{-0.09cm}\begin{pmatrix}
0\\  D^\ast G(\oz,-\oy)(y^\ast) 
\end{pmatrix}
\end{align}
provided the fulfillment of the qualification condition 
\begin{align}\label{QC:Corollary:Parametric:main}
\ker \left[ \proj_{\X}\left(\int_T\partial \langle \cdot, \Phi_t \rangle (\ox,\oz)\right)\right]=\{ 0\}.
\end{align}
Furthermore, we have the following well-posedness properties under the imposed additional assumptions, respectively:\vspace*{-0.05in}
\begin{enumerate}
\item[\bf(i)] The solution map \eqref{def_mapS} is Lipschitz-like around $(\ox,\oz)$ if the partial adjoint generalized equation defined by
\begin{equation}\label{page}
0\in\proj_{\Z}\left(\int_T\partial \langle y^*,\Phi_t\rangle(\ox,\oz)\right)+D^*G(\oz,-\oy)(y^*)
\end{equation}
admits only the trivial solution $y^*=0$.

\item[\bf(ii)] The solution map \eqref{def_mapS} is metrically regular around $(\ox,\oz)$ if the mapping $G$ therein is Lipschitz-like around $(\oz,-\oy)$. 
\end{enumerate}
\end{theorem}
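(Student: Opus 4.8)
The plan is to obtain all three conclusions as specializations of Theorem~\ref{Theorem:Parametric:main}, exploiting the single-valuedness of $\Phi_t$ and the $x$-independence of $G$ to split the general adjoint generalized equation into its $\X$- and $\Z$-components. First I would check that the standing hypotheses (a) and (b) of Theorem~\ref{Theorem:Parametric:main} hold in the present setting. Since $\Phi_t$ is single-valued, the mapping $\mathcal{S}_\Phi(\ox,\oz,\oy)$ collapses to the single selection $\y(t)=\Phi_t(\ox,\oz)$, so inner semicompactness is automatic and $\oy=\Intfset{\Phi}(\ox,\oz)$ is the unique value as asserted. The integrable local Lipschitz property of $\Phi$ yields, via Theorem~\ref{Prop61}(ii), the coderivative bound \eqref{Prop61_eq02}, which in particular furnishes the integrable quasi-Lipschitz condition (b).

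Next I would carry out the identifications that reduce the general statement to the stated one. The scalarization formula \eqref{scal} converts $\int_T D^\ast\Phi_t(\ox,\oz)(y^\ast)\dmu$ into $\int_T\partial\langle y^\ast,\Phi_t\rangle(\ox,\oz)\dmu$, while the $x$-independence of $G$ forces $D^\ast G(\ox,\oz,-\oy)(y^\ast)=\{0\}\times D^\ast G(\oz,-\oy)(y^\ast)$. Under these identifications the adjoint generalized equation \eqref{adj_gen_eq} becomes the requirement that $0$ lie in $\int_T\partial\langle y^\ast,\Phi_t\rangle(\ox,\oz)\dmu+(\{0\}\times D^\ast G(\oz,-\oy)(y^\ast))$. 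Projecting onto $\X$ and noting that the $G$-term is invisible to this projection, any solution $y^\ast$ satisfies $0\in\proj_{\X}(\int_T\partial\langle y^\ast,\Phi_t\rangle(\ox,\oz))$, whence $y^\ast=0$ by \eqref{QC:Corollary:Parametric:main}. Thus \eqref{adj_gen_eq} admits only the trivial solution, Theorem~\ref{Theorem:Parametric:main} applies, and the coderivative estimate \eqref{EQ01_Cor_Par} follows.

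For (i) I would invoke the Mordukhovich criterion \eqref{cod-cr} and take any $x^\ast\in D^\ast S(\ox,\oz)(0)$. Estimate \eqref{EQ01_Cor_Par} with $z^\ast=0$ produces some $y^\ast$ with the pair $(x^\ast,0)$ lying in the indicated sum; reading off the $\Z$-component gives exactly the partial adjoint generalized equation \eqref{page}, so the hypothesis of (i) forces $y^\ast=0$. Since $\langle 0,\Phi_t\rangle\equiv 0$ has subdifferential $\{0\}$ at $(\ox,\oz)$, the $\X$-component then collapses to $x^\ast=0$, yielding $D^\ast S(\ox,\oz)(0)=\{0\}$ and hence the Lipschitz-like property. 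For (ii) I would use the metric regularity characterization \eqref{mr}, i.e.\ $\ker D^\ast S(\ox,\oz)=\{0\}$: taking $z^\ast$ with $0\in D^\ast S(\ox,\oz)(z^\ast)$ and applying \eqref{EQ01_Cor_Par} with $x^\ast=0$, the $\X$-component gives $0\in\proj_{\X}(\int_T\partial\langle y^\ast,\Phi_t\rangle(\ox,\oz))$, so $y^\ast=0$ by \eqref{QC:Corollary:Parametric:main}. The $\Z$-component then reduces to $-z^\ast\in D^\ast G(\oz,-\oy)(0)$, and since $G$ is Lipschitz-like around $(\oz,-\oy)$, criterion \eqref{cod-cr} gives $D^\ast G(\oz,-\oy)(0)=\{0\}$, forcing $z^\ast=0$.

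I expect the only genuinely delicate point to be the bookkeeping of the $\X$/$\Z$-splitting: verifying that the projected qualification condition \eqref{QC:Corollary:Parametric:main} is exactly strong enough to trigger the full adjoint-equation hypothesis of Theorem~\ref{Theorem:Parametric:main} (precisely because the $x$-free $G$ contributes nothing to the $\X$-component), while the finer well-posedness conclusions (i) and (ii) are then extracted from the two components separately, using the partial equation \eqref{page} for Lipschitz-likeness and the Lipschitz-likeness of $G$ for metric regularity.
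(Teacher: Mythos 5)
Your proposal is correct and follows essentially the same route as the paper's proof: specialize Theorem~\ref{Theorem:Parametric:main}, observe that the $x$-independence of $G$ gives $D^\ast G(\ox,\oz,-\oy)(y^\ast)=\{0\}\times D^\ast G(\oz,-\oy)(y^\ast)$ so the $\X$-component of the adjoint generalized equation \eqref{adj_gen_eq} reads $0\in\proj_{\X}\bigl(\int_T\partial\langle y^\ast,\Phi_t\rangle(\ox,\oz)\dmu\bigr)$ and \eqref{QC:Corollary:Parametric:main} forces $y^\ast=0$, then extract (i) and (ii) from the two components of \eqref{EQ01_Cor_Par} via the coderivative criteria \eqref{cod-cr} and \eqref{mr}. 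Your explicit verification of hypotheses (a)--(b) through Theorem~\ref{Prop61}(ii) and the remark that $\partial\langle 0,\Phi_t\rangle(\ox,\oz)=\{0\}$ merely fill in steps the paper leaves implicit.
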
\vspace*{-0.05in}
\begin{proof} Observe that the adjoint generalized equation \eqref{adj_gen_eq} admits in our case only the trivial solution. Indeed, the inclusion in \eqref{adj_gen_eq} reduces now to
$$
\begin{pmatrix}
0  \\ 0
\end{pmatrix} \in  \displaystyle \hspace{-0.05cm}\int_T\partial \langle y^\ast, \Phi_t \rangle (\ox) \dmu  \hspace{-0.05cm}+ \hspace{-0.05cm}\begin{pmatrix} 0
\\ D^\ast G(\oz,-\oy)(y^\ast) 
\end{pmatrix}
$$
for some $y^\ast \in \Y$. Then it follows from \eqref{QC:Corollary:Parametric:main} that $y^\ast=0$, and thus we deduce from
Theorem~\ref{Theorem:Parametric:main} under the general assumptions made that \eqref{EQ01_Cor_Par} holds.   

Let us further verify the well-posedness properties under the corresponding additional assumptions in (i) and (ii). It is easy to see that the solution map \eqref{def_mapS} is closed-graph around $(\ox,\oz)$ under the general assumptions made. Starting with assertion (i), we apply the coderivative criterion \eqref{cod-cr} for the Lipschitz-like property to the mapping $S$ around $(\ox,\oz)$, which reads as $D^*S(\ox,\oz)(0)=\{0\}$. This means by the coderivative evaluation of Theorem~\ref{Theorem:Parametric:main} that $x^*=0$ the only vector satisfying the inclusion in \eqref{EQ01_Cor_Par} with $z^*=0$. By the structure of  \eqref{EQ01_Cor_Par} this is the case when the partial adjoint generalized equation \eqref{page} has only the trivial solution $y^*=0$. 

To prove assertion (ii), we pick any $z^\ast \in \Z$ such that $0\in D^\ast S(\ox,\oz)(z^\ast)$ and get by the coderivative evaluation 
in \eqref{EQ01_Cor_Par} that
\begin{equation}\label{svi1}
\begin{pmatrix}
0 \\-z^\ast 
\end{pmatrix} \in  \displaystyle \hspace{-0.05cm}\int_T\partial \langle y^\ast, \Phi_t \rangle (\ox) \dmu  \hspace{-0.05cm}+ \hspace{-0.05cm}\begin{pmatrix}
0\\  D^\ast G(\oz,-\oy)(y^\ast) 
\end{pmatrix},
\end{equation}
which implies by \eqref{QC:Corollary:Parametric:main} that $y^\ast=0$. Since $G$ is assumed to be Lipschitz-like around $(\oz,-\oy)$ we have $D^*G(\oz,-\oy)(0)=\{0\}$ by the coderivative criterion \eqref{cod-cr}, and therefore deduce from \eqref{svi1} that $z^*=0$. This verifies that $\ker D^*S(\ox,\oz)=\{0\}$ and thus confirms that $S$ is metrically regular around $(\ox,\oz)$ by the coderivative characterization of this property in \eqref{mr}.\end{proof}\vspace*{-0.05in}

Now we illustrate the application of Theorem~\ref{Corollary:Parametric:main} to the class of stochastic variational inequalities defined in Example~\ref{example_VI}.\vspace*{-0.05in}

\begin{example}\label{svi-ex} Consider the stochastic variational inequality in form \eqref{svi} with $(\ox,\oz) \in \gph S$, where $\Phi_\omega (x,z) = f(x) + H_\omega(z)$ with a continuously differentiable function $f$ around $\bar{x}$  and  a single-valued and integrable locally Lipschitz  function $H$ around $ \oz$. Suppose that the Jacobian matrix $\nabla f(\ox) $ is of full rank.  Then the qualification condition \eqref{QC:Corollary:Parametric:main} holds automatically. Therefore, Theorem~\ref{Corollary:Parametric:main} tells us that for every $x^\ast \in D^\ast S(\ox,\oz)(z^\ast)$ there exists a (unique) vector $y^\ast \in \Y$ such that $x^\ast=\nabla f(\ox)^*(y^\ast)$ and the inclusion 
\begin{align*}
-z^\ast &\in \displaystyle  \int_\Omega \partial \langle y^\ast , H_\omega \rangle  (\oz)\mathbb{P}(d\omega)+D^\ast G(\oz,-\oy)(y^\ast)
\end{align*}
is satisfied. To verify the Lipschitz-like property of \eqref{svi} from assertion (i) of Theorem~\ref{Corollary:Parametric:main}, observe that the mapping $G(\cdot)=N(z;K)$ does not depend on $x$ and thus the coderivative of $G$ in Theorem~\ref{Corollary:Parametric:main} for system \eqref{svi} reduces to $(0,D^*N(\cdot;K))^\top$ at the corresponding point, where $D^*N(\cdot;K)$ is actually the second-order subdifferential \eqref{2nd} of the indicator function $\dd(\cdot;K)$. Thus the mapping $S$ in this case is Lipschitz-like around $(\ox,\oz)$ if we have the implication 
\begin{equation*}
\Bigg[\displaystyle\proj_{\Z}\Bigg(\int_\Omega \partial \langle y^\ast, H_\omega \rangle  (\oz)\mathbb{P}(d\omega)\Bigg)+D^*N(\cdot;K)(\oz,-\oy)(y^*)\Bigg]\Longrightarrow y^*=0.
\end{equation*}
The condition in Theorem~\ref{Corollary:Parametric:main}(ii) for the metric regularity of $S$ reduces in the case of \eqref{svi} to the Lipschitz-like property of the normal cone mapping $z\mapsto N(z;K)$. However, it {\em does not hold} due to the intrinsic discontinuity of normal cone mappings; see the discussion right after Example~\ref{example_VI}.
\end{example}\vspace*{-0.03in}

Next we consider a more general class of {\em composite stochastic variational inequalities} defined by
\begin{align}\label{def_mapS03}
S(x):=\big\{ z \in \Z\big|\;0 \in \Intfset{\Phi}(x,z)+N\big(\psi(x,z);\Omega\big)  \big\},
\end{align}
where $\Phi: T\times \X \times \Z \to\Y$ is a vector-valued normal integrand, $\psi:\X \times \Z \to\Y$ is a vector-valued function, and $\Omega\subseteq Y$ is a closed set. As a consequence of Theorem~\ref{Corollary:Parametric:main} and coderivative calculus rules, we arrive at the following coderivative evaluation for system \eqref{def:set-valued:Exp0} in terms of its initial data.\vspace*{-0.05in}

\begin{corollary}\label{svi-comp}
Let  $S: \X \tto \Z$ be given in  \eqref{def_mapS03}, and let $(\ox,\oz)\in \gph S$ be such that $\Phi$ is single-valued and locally Lipschitz around $(\ox,\oz)$, and that $\psi$ is continuously differentiable around this point.  Denoting $\oy=\Intfset{\Phi}(\ox,\oz)$, we have that whenever $z^\ast \in \Z$ and $x^\ast \in 	D^\ast S(\ox,\oz) (z^\ast)$ there exists $y^\ast \in \Y$  for which the coderivative upper estimate
\begin{align*}
\begin{pmatrix}
x^\ast \\ -z^\ast 
\end{pmatrix} \in  \displaystyle \hspace{-0.05cm}\int_T\partial \langle y^\ast, \Phi_t \rangle (\ox,\oz) \dmu  + \nabla \psi(\ox,\oz)^\ast \circ D^\ast N(\cdot;\Omega)\big(\psi(\ox,\oz),\oy\big)(y^\ast)
\end{align*} holds, provided that the adjoint generalized equation 
\begin{equation*} 
0 \in  \int_T\partial \langle y^\ast, \Phi_t \rangle (\ox,\oz) \dmu  + \nabla \psi(\ox,\oz)^\ast \circ D^\ast N(\cdot;\Omega)\cdot\big(\psi(\ox,\oz),\oy\big)(y^\ast)
\end{equation*}
admits only the trivial solution  $y^\ast=0$. 
\end{corollary}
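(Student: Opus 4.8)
The plan is to recognize the composite system \eqref{def_mapS03} as a particular instance of the general stochastic variational system \eqref{def_mapS} and to apply the coderivative evaluation of Theorem~\ref{Theorem:Parametric:main} with the set-valued part taken as $G(x,z):=N\big(\psi(x,z);\Omega\big)$. First I would check that this $G$ fits the framework: since $\Omega$ is closed, $\psi$ is continuous, and the normal-cone mapping $v\mapsto N(v;\Omega)$ is of closed graph, the composition $G$ is a closed-graph multifunction from $\X\times\Z$ into $\Y$. Because $(\ox,\oz)\in\gph S$ and $\oy=\Intfset{\Phi}(\ox,\oz)$, the defining inclusion yields $-\oy\in N\big(\psi(\ox,\oz);\Omega\big)=G(\ox,\oz)$, so $(\ox,\oz,-\oy)$ is a legitimate base point in $\gph G$. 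The remaining standing hypotheses of Theorem~\ref{Theorem:Parametric:main} --- the convexity/boundedness conditions \eqref{convex_cond_set02}, the inner semicompactness of $\mathcal{S}_\Phi$ at $(\ox,\oz,\oy)$, and the integrable quasi-Lipschitzian property --- follow exactly as in the proof of Corollary~\ref{Corollary_Cod_F01} from the assumed single-valuedness and local Lipschitz continuity of $\Phi$: singletons are convex and locally bounded, the unique integrable selection collapses $\mathcal{S}_\Phi$, and Theorem~\ref{Prop61} converts local Lipschitz continuity into the quasi-Lipschitzian bound.

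Next I would compute $D^\ast G$. Writing $G=N(\cdot;\Omega)\circ\psi$ with the inner map $\psi$ continuously differentiable (hence strictly differentiable) and single-valued, the coderivative chain rule of \cite{m18} applies \emph{without any additional qualification condition} and gives the equality
\begin{equation*}
D^\ast G(\ox,\oz,-\oy)(y^\ast)=\nabla\psi(\ox,\oz)^\ast D^\ast N(\cdot;\Omega)\big(\psi(\ox,\oz),-\oy\big)(y^\ast),
\end{equation*}
the inner normal-cone coderivative being anchored at the graph point $\big(\psi(\ox,\oz),-\oy\big)$ with $-\oy\in N\big(\psi(\ox,\oz);\Omega\big)$. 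Simultaneously, the single-valued locally Lipschitzian integrand lets me invoke the scalarization formula \eqref{scal} fiberwise, so that $\bigcup_{\y\in\mathcal{S}_\Phi(\ox,\oz,\oy)}\int_T D^\ast\Phi_t(\ox,\oz,\y(t))(y^\ast)\dmu$ collapses to the single expected-integral subdifferential $\int_T\partial\langle y^\ast,\Phi_t\rangle(\ox,\oz)\dmu$. Substituting both expressions into the adjoint generalized equation \eqref{adj_gen_eq} shows that it reduces precisely to the displayed adjoint generalized equation of the corollary.

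With these reductions in hand the conclusion is immediate: the hypothesis that the reduced adjoint equation has only the trivial solution $y^\ast=0$ is exactly the qualification condition demanded by Theorem~\ref{Theorem:Parametric:main}, so the theorem produces some $y^\ast\in\Y$ together with the inclusion $(x^\ast,-z^\ast)^\top\in D^\ast G(\ox,\oz,-\oy)(y^\ast)+\bigcup_{\y}\int_T D^\ast\Phi_t(\ox,\oz,\y(t))(y^\ast)\dmu$. Inserting the chain-rule and scalarization expressions computed above turns this into the claimed coderivative upper estimate.

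The \textbf{main obstacle} is the clean application of the composite coderivative rule to $G=N(\cdot;\Omega)\circ\psi$: one must confirm that the $C^1$-smoothness of $\psi$ genuinely removes any mixed qualification condition between the smooth inner map and the (highly irregular) normal-cone outer map, so that the chain rule holds as an \emph{exact} formula rather than a mere inclusion, and that the accompanying intersection rule used inside Theorem~\ref{Theorem:Parametric:main} remains valid after composition. The surrounding bookkeeping is also delicate, in particular keeping the inner normal-cone coderivative tied to the correct base point $\big(\psi(\ox,\oz),-\oy\big)$ and tracking the sign conventions of \eqref{adj_gen_eq} through the composition; once Theorem~\ref{Theorem:Parametric:main}, the scalarization formula \eqref{scal}, and the smooth chain rule are in place, everything else is routine.
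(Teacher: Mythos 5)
Your proposal follows essentially the paper's own route: the paper disposes of this corollary in one line by specializing the parametric theorem to $G(x,z):=N(\psi(x,z);\Omega)$ and invoking the coderivative chain rule of \cite[Theorem~3.11]{m18}, exactly the reduction you carry out. Two of your choices are in fact \emph{more} careful than the paper's text. First, you base the argument on Theorem~\ref{Theorem:Parametric:main} rather than on Theorem~\ref{Corollary:Parametric:main} (which the paper cites); this is the defensible citation, since Theorem~\ref{Corollary:Parametric:main} assumes $G$ independent of $x$, whereas here $G$ depends on $x$ through $\psi$. Your verification of the standing hypotheses (singleton values are convex and locally bounded, $\mathcal{S}_\Phi$ collapses to the single selection $\ooy(t)=\Phi_t(\ox,\oz)$, Theorem~\ref{Prop61} converts integrable Lipschitz continuity into the quasi-Lipschitz bound, scalarization \eqref{scal} reduces $\int_T D^\ast\Phi_t(\ox,\oz)(y^\ast)\dmu$ to $\int_T\partial\langle y^\ast,\Phi_t\rangle(\ox,\oz)\dmu$) matches what the paper does implicitly. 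Second, anchoring the inner coderivative at $\big(\psi(\ox,\oz),-\oy\big)$ is the correct bookkeeping, since $-\oy\in N\big(\psi(\ox,\oz);\Omega\big)$; the corollary's displayed base point $\big(\psi(\ox,\oz),\oy\big)$ reads as a sign typo in the paper.

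There is, however, one genuinely incorrect assertion in your write-up: the claim that the chain rule for $G=N(\cdot;\Omega)\circ\psi$ ``applies without any additional qualification condition and gives the equality'' because $\psi$ is strictly differentiable. What $C^1$-smoothness of the single-valued inner map buys is only the dispensability of the inner-semicompactness hypothesis of \cite[Theorem~3.11]{m18}; the mixed qualification condition of that theorem, which here reads
\begin{equation*}
D^\ast N(\cdot;\Omega)\big(\psi(\ox,\oz),-\oy\big)(0)\cap\ker\nabla\psi(\ox,\oz)^\ast=\{0\},
\end{equation*}
remains in force, and it is automatic (with equality in the chain rule) only when $\nabla\psi(\ox,\oz)$ is surjective --- a hypothesis the corollary does not impose. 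The upper inclusion can genuinely fail without this condition: take a constant inner map and an outer map whose coderivative at $y^\ast\neq 0$ is empty while its coderivative at $0$ contains nonzero elements. Note also that the corollary's hypothesis on the adjoint generalized equation does \emph{not} subsume this qualification condition, since triviality of the adjoint equation constrains nonzero $y^\ast$, whereas the chain-rule condition concerns nonzero elements of the coderivative at $y^\ast=0$. Since only the inclusion (never the equality) enters your argument, the damage is confined to this missing qualification condition --- which, to be fair, the paper's own one-line proof is equally silent about, so your proposal makes explicit, with an unjustified rationale, a gap the published proof leaves implicit.
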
\vspace*{-0.05in}
\begin{proof} This follows from Theorem~\ref{Corollary:Parametric:main} with $G(x,z):=N(\psi(x,z);\Omega)$ by applying the coderivative chain rule obtained in \cite[Theorem~3.11]{m18}.
\end{proof}\vspace*{-0.05in}

We conclude this section by the following consequence of Theorem~\ref{Corollary:Parametric:main} that addresses stochastic variational systems \eqref{def_mapS} with smooth mappings $\Phi_t$.\vspace*{-0.03in}

\begin{corollary}\label{Cor:Parametric:smooth} In addition to the general assumptions of Theorem~{\rm\ref{Corollary:Parametric:main}}, suppose that $\Phi_t$ is continuously differentiable around $(\ox,\oz)$ and that 
\begin{equation*}
\ker\left[\int_T \nabla_x \Phi_t(\ox,\oz)^\ast (\cdot) \dmu \right]=\{ 0\}.
\end{equation*}
Then for every $x^\ast \in D^\ast S(\ox,\oz) (z^\ast)$ there exists $y^\ast \in \Y$ such that 
\begin{align*}
x^\ast  &= \displaystyle \int_T \nabla_x \Phi_t(\ox,\oz)^\ast  (y^\ast) \dmu\;\mbox{ and }\\
-z^\ast &\in \displaystyle   \int_T \nabla_z \Phi_t(\ox,\oz)^\ast  (y^\ast) \dmu + D^\ast G(\oz,-\oy)(y^\ast),
\end{align*}
where $\oy: =\Intfset{\Phi}(\ox,\oz)$. Furthermore, the solution map $S$ from \eqref{def_mapS} is Lipschitz-like around $(\ox,\oz)$ if we have the implication
\begin{equation*}
\Bigg[0\in \displaystyle\int_T \nabla_z \Phi_t(\ox,\oz)^\ast  (y^\ast)\dmu + D^\ast G(\oz,-\oy)(y^\ast)\Big]]\Longrightarrow y^*=0. 
\end{equation*}
The metric regularity of the solution map $S$ around $(\ox,\oz)$ is ensured by the Lipschitz-like property of the mapping $G$ around $(\oz,-\oy)$.
\end{corollary}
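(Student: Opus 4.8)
The plan is to obtain everything by \emph{specializing} Theorem~\ref{Corollary:Parametric:main} to the smooth setting, so that the subdifferential integrals appearing there collapse into adjoint-Jacobian integrals. First I would record that, since $\Phi_t$ is continuously differentiable around $(\ox,\oz)$, the scalarization formula \eqref{scal} localizes for a.e.\ $t$ to the single point $\partial\langle y^\ast,\Phi_t\rangle(\ox,\oz)=\{\nabla\Phi_t(\ox,\oz)^\ast(y^\ast)\}$, whose $\X$- and $\Z$-blocks are $\nabla_x\Phi_t(\ox,\oz)^\ast(y^\ast)$ and $\nabla_z\Phi_t(\ox,\oz)^\ast(y^\ast)$, respectively. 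Consequently the Aumann integral $\int_T\partial\langle y^\ast,\Phi_t\rangle(\ox,\oz)\dmu$ reduces to the \emph{singleton} $\big(\int_T\nabla_x\Phi_t(\ox,\oz)^\ast(y^\ast)\dmu,\ \int_T\nabla_z\Phi_t(\ox,\oz)^\ast(y^\ast)\dmu\big)$, and its projection onto $\X$ coincides with $\int_T\nabla_x\Phi_t(\ox,\oz)^\ast(\cdot)\dmu$.

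Second, I would check that the qualification condition \eqref{QC:Corollary:Parametric:main} required by Theorem~\ref{Corollary:Parametric:main} is exactly the imposed hypothesis $\ker[\int_T\nabla_x\Phi_t(\ox,\oz)^\ast(\cdot)\dmu]=\{0\}$, via the projection identity from the previous step. With this in hand, Theorem~\ref{Corollary:Parametric:main} applies and yields \eqref{EQ01_Cor_Par} for some $y^\ast\in\Y$. I would then split this vector inclusion into its two blocks: because the integral in the first block is single-valued, the $\X$-component forces the \emph{equality} $x^\ast=\int_T\nabla_x\Phi_t(\ox,\oz)^\ast(y^\ast)\dmu$, while the $\Z$-component gives $-z^\ast\in\int_T\nabla_z\Phi_t(\ox,\oz)^\ast(y^\ast)\dmu+D^\ast G(\oz,-\oy)(y^\ast)$, which are precisely the two claimed relations.

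For the well-posedness assertions I would translate the conditions of Theorem~\ref{Corollary:Parametric:main}(i)--(ii) through the same smooth reduction. The partial adjoint generalized equation \eqref{page} becomes $0\in\int_T\nabla_z\Phi_t(\ox,\oz)^\ast(y^\ast)\dmu+D^\ast G(\oz,-\oy)(y^\ast)$ after applying $\proj_{\Z}$ to the now single-valued subdifferential integral; hence the stated implication is exactly the hypothesis of Theorem~\ref{Corollary:Parametric:main}(i) and delivers the Lipschitz-like property of $S$ around $(\ox,\oz)$. The metric regularity claim is immediate from Theorem~\ref{Corollary:Parametric:main}(ii), whose only additional requirement is that $G$ be Lipschitz-like around $(\oz,-\oy)$.

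The routine bookkeeping aside, the point to handle with care is the \emph{passage of the $C^1$ scalarization under the integral}: one must verify, for a.e.\ $t$ and jointly in $t$, that $\partial\langle y^\ast,\Phi_t\rangle(\ox,\oz)$ is genuinely the singleton $\{\nabla\Phi_t(\ox,\oz)^\ast(y^\ast)\}$ and that the resulting adjoint-Jacobian selection is measurable and integrable, so that the Aumann integral is a single point and the block-projection identity holds. This is where the earlier measurability results (Lemma~\ref{lemma_measurability_reg_sub}) and the integrable local Lipschitz bound inherited from the general assumptions are used; once these are in place, the equality in the $\X$-component (rather than a mere inclusion) is exactly what sharpens the smooth statement relative to its nonsmooth predecessor.
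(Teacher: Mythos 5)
Your proposal is correct and takes essentially the same route as the paper's proof: the paper likewise obtains the corollary by specializing Theorem~\ref{Corollary:Parametric:main}, using the fact that for continuously differentiable $\Phi_t$ the integral $\int_T\partial\langle y^\ast,\Phi_t\rangle(\ox,\oz)\dmu$ collapses to the single pair of adjoint-Jacobian integrals $\big(\int_T\nabla_x\Phi_t(\ox,\oz)^\ast(y^\ast)\dmu,\ \int_T\nabla_z\Phi_t(\ox,\oz)^\ast(y^\ast)\dmu\big)$, after which the block splitting, the identification of the qualification condition, and both well-posedness assertions follow exactly as you describe. Your extra care about measurability and integrability of the gradient selection (via Lemma~\ref{lemma_measurability_reg_sub} and the integrable Lipschitz bound) is left implicit in the paper's one-line argument and is a harmless, indeed welcome, elaboration.
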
\vspace*{-0.05in}
\begin{proof} The claimed results follow from Theorem~\ref{Corollary:Parametric:main} due to the representation
$$
\int_T\partial \langle y^\ast, \Phi_t \rangle (\ox) \dmu  = \begin{pmatrix}
\displaystyle \int_T \nabla_x \Phi_t(\ox,\oz)^\ast  (y^\ast) \dmu\\ \vspace{-0.2cm}\\
\displaystyle \int_T \nabla_z \Phi_t(\ox,\oz)^\ast  (y^\ast) \dmu
\end{pmatrix}
$$
when $\Phi_t$ is continuously differentiable around $(\ox,\oz)$.
\end{proof}\vspace*{-0.35in}
	
\section{Sensitivity analysis under integrable Lipschitz-like property}\label{Sen_constraint_integrable_LipsLike}\vspace*{-0.1in}\setcounter{equation}{0}

In this section we examine the possibility of replacing the coderivative-based integrable quasi-Lipschitzian assumption \eqref{Int_Lips_like_inq} on the set-valued normal integrand $\Phi$ in the sensitivity analysis of stochastic constrained and variational systems in Section~\ref{Sensitivity_analysis_parametric_constraint} and Section~\ref{Sensitivity_variational}, respectively, by the explicit {\em integrable Lipschitz-like} property \eqref{eq_definition_Int_loc:Lipschitz-like} of this mapping. The reader is referred to \cite[Section~4]{mp3} for a detailed study of these properties and relationships between them.

For brevity, we present here only results concerning upper estimates of the limiting coderivative \eqref{lcod} for the general stochastic constraint mapping $F$ defined in \eqref{def_mapF} and the stochastic variational system $S$ defined in \eqref{def_mapS}. The corresponding conditions ensuring the Lipschitz-like and metric regularity properties of these systems can be derived, similarly to Sections~\ref{Sensitivity_analysis_parametric_constraint} and \ref{Sensitivity_variational}, from these evaluations due to the coderivative criteria. 
 
To proceed, we replace the assumptions in \eqref{convex_cond_set} with localized conditions at the point of interest. Given a random multifunction $\Phi: T\times \X \times\Z \tto \Y$ and a point  $(\ox,\oz)\in \dom  \Intf{\Phi}$, assume that there exits $\rho >0$ such that
\begin{align}\label{convex_cond_set2} 
\begin{aligned}
&\Phi_t(x,z) \text{ is convex for all } (x,z)\in \mathbb{B}_\rho(\ox,\oz) \text{ and all } t\in T,	\\
&\Phi_t(\ox,\oz)  \text{ is single-valued for all } t\in T.
\end{aligned}
\end{align}

The first result establishes a coderivative upper bound for the {\em stochastic constraint system} $F$ from \eqref{def_mapF}. Recall that when	$\Phi_t(\ox,\oz)=\{\ooy(t)\}$, we identify $\Phi_t(\ox,\oz)$ with $\ooy(t)$ and omit $\ooy(t)$ in the coderivative notation.\vspace*{-0.03in}

\begin{theorem}\label{COD:Lipschitz-like:F}
Let $F: \X\tto \Z$ be taken from \eqref{def_mapF} with a set-valued normal integrand  $\Phi: T\times \X\times \Z \tto \Y$ and closed sets $\mathcal{K} \subseteq\Z$ and $\mathcal{O}\subseteq \X\times \Y$. Pick $(\ox,\oz) \in \gph F$ and suppose that $\gph F$ is normally regular at $(\ox,\oz)$ and that $\Phi$ satisfies \eqref{convex_cond_set2}. Having $\oy \in \Intf{\Phi}(\ox,\oz) \cap \mathcal{K}$ and $\Phi_t(\ox,\oz)=\{ \ooy(t)\}$, assume that $\Phi$ is  integrably Lipschitz-like around $(\ox,\oz,\ooy)$. Then for all $z^\ast\in \Z$ and $x^\ast \in D^\ast F(\ox,\oz)(z^\ast)$ there exists $y^\ast \in N(\oy;\mathcal{K})$ such that
\begin{align}\label{incl:00}
\begin{pmatrix}
x^\ast\\ - z^\ast
\end{pmatrix}
\in\int_T D^\ast \Phi_t(x,z) (y^\ast) \dmu + 	N\big((\ox,\oz);\mathcal{O}\big)
\end{align}
provided the fulfillment of the qualification conditions
\begin{align}\label{Theo_Sens_general_Lipschitz-like}
\Bigg[0\in\int_T D^\ast \Phi_t(x,z) (y^\ast) \dmu,\;\;y^\ast \in 	N(\oy;\mathcal{K})\Bigg]\Longrightarrow y^\ast=0,
\end{align} 
\begin{align}
\bigcup_{y^*\in N(\oy;\mathcal{K})}\Bigg[\int_T D^\ast \Phi_t(\ox,\oz) (y^\ast) \dmu\Bigg]\bigcap\big(- N((\ox,\oz);\mathcal{O}\big)\Bigg]=\{0\}. \label{Theo_Sens_generalQC012}
\end{align} 
\end{theorem}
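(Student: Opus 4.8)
The plan is to follow the architecture of the proof of Theorem~\ref{Theo_Sens_general}, replacing the quasi-Lipschitzian Leibniz rule of Lemma~\ref{Theo_lim_basic_Cod00} by a coderivative estimate for $\Intfset{\Phi}$ that is available under the \emph{integrable Lipschitz-like} property \eqref{eq_definition_Int_loc:Lipschitz-like}. As in that proof, I would start from the representation $\gph F=\Intfset{\Phi}^{-1}(\mathcal{K})\cap\mathcal{O}$, so that the inclusion $x^\ast\in D^\ast F(\ox,\oz)(z^\ast)$ is equivalent to $(x^\ast,-z^\ast)\in N\big((\ox,\oz);\Intfset{\Phi}^{-1}(\mathcal{K})\cap\mathcal{O}\big)$, and reduce the theorem to producing an upper estimate of this normal cone. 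Here the standing assumption is already lighter than before: under \eqref{convex_cond_set2} the value $\Phi_t(\ox,\oz)=\{\ooy(t)\}$ is single-valued, the fibre $\mathcal{S}_{\Phi}(\ox,\oz,\oy)$ from \eqref{mapping:SPHI} reduces to the single selection $\ooy$, and the union appearing in Lemma~\ref{Theo_lim_basic_Cod00} collapses.

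The decisive new ingredient is the coderivative estimate for the expected-integral multifunction under \eqref{eq_definition_Int_loc:Lipschitz-like}. Feeding the integrable Lipschitz-like inclusion into the sequential construction behind Lemma~\ref{Sequential_Regular} (in place of the quasi-Lipschitzian bound \eqref{Int_Lips_like_inq} used to produce Lemma~\ref{Theo_Regular_coder00}), and exploiting that the selection is unique, I would obtain the \emph{regular} coderivative estimate
\[
\Hat{D}^\ast\Intfset{\Phi}(\ox,\oz,\oy)(y^\ast)\subseteq\int_T D^\ast\Phi_t(\ox,\oz)(y^\ast)\dmu,\qquad y^\ast\in\Y,
\]
which is the natural analog of Lemma~\ref{Theo_Regular_coder00} for the integrable Lipschitz-like hypothesis and which has exactly the integrand $D^\ast\Phi_t(\ox,\oz)$ occurring in the target inclusion \eqref{incl:00}. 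The reference \cite[Section~4]{mp3} supplies the uniform control needed to pass from the regular coderivative selections to this integral bound.

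With this estimate in hand I would carry out the two calculus steps as in Theorem~\ref{Theo_Sens_general}: the intersection rule \cite[Theorem~2.16]{m18} splits the normal cone to $\Intfset{\Phi}^{-1}(\mathcal{K})\cap\mathcal{O}$ into the sum of the cones to $\Intfset{\Phi}^{-1}(\mathcal{K})$ and to $\mathcal{O}$, and the preimage rule \cite[Corollary~3.13]{m18} bounds the cone to $\Intfset{\Phi}^{-1}(\mathcal{K})$ by $\bigcup_{y^\ast\in N(\oy;\mathcal{K})}\Intfset{\Phi}$-coderivative terms. The qualification condition \eqref{Theo_Sens_general_Lipschitz-like} validates the preimage step and \eqref{Theo_Sens_generalQC012} validates the intersection step, once the displayed estimate is substituted; combining them yields \eqref{incl:00}. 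The hypothesis that $\gph F$ is normally regular at $(\ox,\oz)$ enters precisely to reconcile the regular and limiting constructions: it guarantees $D^\ast F(\ox,\oz)(z^\ast)=\Hat{D}^\ast F(\ox,\oz)(z^\ast)$, so that an estimate assembled from the regular coderivative information on $\Intfset{\Phi}$ still describes the limiting coderivative $D^\ast F$ demanded by the statement.

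I expect the main obstacle to be exactly this passage between regular and limiting objects. The calculus rules of \cite{m18} that I invoke are stated for the limiting normal cone, whereas the integrable Lipschitz-like property controls only $\Hat{D}^\ast\Intfset{\Phi}$; the robustness of the limiting cones $N(\oy;\mathcal{K})$ and $N((\ox,\oz);\mathcal{O})$ in the qualification conditions, together with the normal regularity of $\gph F$, is what I would leverage to run the argument consistently and still land on $D^\ast F$. A related technical point is that \eqref{eq_definition_Int_loc:Lipschitz-like} controls $\Phi_t$ only on the $\gg$-tube around $\ooy(t)$, so one must check that the measurable selections produced in the analog of Lemma~\ref{Sequential_Regular} remain inside this tube; the single-valuedness in \eqref{convex_cond_set2} is what I would use to keep them there. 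Verifying that the intersection and preimage estimates survive this reduction to regular normals — which is not automatic — is where I anticipate the real work to lie.
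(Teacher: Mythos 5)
You have correctly identified the crux of the difficulty --- the mismatch between the regular coderivative information that the integrable Lipschitz-like property \eqref{eq_definition_Int_loc:Lipschitz-like} delivers and the limiting objects that the calculus rules of \cite{m18} require --- but your proposal leaves exactly this point unresolved, and it is a genuine gap rather than a routine verification. The preimage rule \cite[Corollary~3.13]{m18} and the intersection rule \cite[Theorem~2.16]{m18} bound the \emph{limiting} normal cone to $\Intfset{\Phi}^{-1}(\mathcal{K})\cap\mathcal{O}$ in terms of the \emph{limiting} coderivative $D^\ast\Intfset{\Phi}$, which aggregates regular coderivatives at graph points $(x,z,y)$ near $(\ox,\oz,\oy)$; at such nearby points $\Phi_t$ need not be single-valued, and the selections $\y_k$ produced by the sequential construction of Lemma~\ref{Sequential_Regular} converge to $\ooy$ only in $\Leb^1$-norm, which does not confine $\y_k(t)$ to the tube $\mathbb{B}_\gamma(\ooy(t))$ where \eqref{eq_definition_Int_loc:Lipschitz-like} has any force. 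The single-valuedness in \eqref{convex_cond_set2} holds only at the reference point $(\ox,\oz)$ itself, so it cannot ``keep the selections in the tube'' as you suggest. Moreover, normal regularity of $\gph F$ does not regularize $\Intfset{\Phi}$ or the preimage $\Intfset{\Phi}^{-1}(\mathcal{K})$, so it cannot convert your pointwise regular-coderivative estimate into the limiting estimate that the calculus chain consumes.

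The paper's proof supplies precisely the missing device: \emph{truncation}, in the spirit of \cite[Theorem~3E.3]{dr}. One replaces $\Phi_t$ by $\Hat\Phi_t(x,z):=\Phi_t(x,z)\cap\mathbb{B}_{\gamma}(\cdot)$, which upgrades the tube condition \eqref{eq_definition_Int_loc:Lipschitz-like} to the full integrable local Lipschitz property \eqref{eq_definition_Int_loc} for $\Hat\Phi$; then Theorem~\ref{Theo_Sens_general} applies \emph{verbatim} to the truncated system $\Hat F$, since \cite[Theorem~4.4]{mp3} yields the quasi-Lipschitzian property and \cite[Corollary~5.6]{mp3} the inner semicompactness of $\mathcal{S}_{\Hat\Phi}$ --- no re-derivation of a Leibniz rule is needed. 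Normal regularity of $\gph F$ then enters in a completely different place from where you deploy it: since $\gph\Hat F\subseteq\gph F$ with $(\ox,\oz)$ in both, antitonicity of the regular normal cone under set inclusion gives the chain
\begin{equation*}
D^\ast F(\ox,\oz)(z^\ast)=\Hat D^\ast F(\ox,\oz)(z^\ast)\subseteq\Hat D^\ast\Hat F(\ox,\oz)(z^\ast)\subseteq D^\ast\Hat F(\ox,\oz)(z^\ast),
\end{equation*}
transferring the whole problem to $\Hat F$, where all limiting machinery is legitimate. Finally, since $\Phi_t(\ox,\oz)=\{\ooy(t)\}$ lies in the interior of the truncation ball, the graphs of $\Phi_t$ and $\Hat\Phi_t$ coincide locally around $(\ox,\oz,\ooy(t))$, whence $D^\ast\Hat\Phi_t(\ox,\oz)=D^\ast\Phi_t(\ox,\oz)$, which is what returns the estimate \eqref{incl:00} and the qualification conditions \eqref{Theo_Sens_general_Lipschitz-like}--\eqref{Theo_Sens_generalQC012} to the original data. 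In short, your architecture (re-proving a regular Leibniz rule and forcing it through limiting calculus) would require substantial new theory not present in the cited toolkit, whereas the truncation reduces the theorem to the already-proved Theorem~\ref{Theo_Sens_general} at the cost of one elementary regular-normal comparison.
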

\begin{proof} Proceeding similarly to the proof of the truncation result from \cite[Theorem~3E.3]{dr}, we find positive constants  $\gamma,\eta$, and $\ell$ such that the truncated integrand $\Hat{\Phi}_t(x,z):= \Phi_t(x,z) \cap \mathbb{B}_{\gamma}(\ox,\oz)$ satisfies the integrable Lipschitzian property \eqref{eq_definition_Int_loc} around $(\ox,\oz)$ with constants $\eta$ and $\ell$. Define now the mapping 
\begin{equation*}
\Hat{F}(x):=\big\{z \in \Z\;\big|\;\Intfset{\Hat{\Phi}}(x,z) \cap \mathcal{K} \neq \emp, \; (x,z)\in \mathcal{O}\big\}.
\end{equation*}
Since $(\ox,\oz) \in \gph \Hat{F} \subseteq  \gph  {F}$ and $\gph F$ is normally regular at $(\ox,\oz)$, we get
\begin{equation*}
D^\ast F(\ox,\oz)(z^\ast)=\Hat D^\ast F(\ox,\oz)(z^\ast) \subseteq\Hat D^\ast \Hat{F}(\ox,\oz) (z^\ast)\subseteq D^\ast \Hat{F}(\ox,\oz) (z^\ast)
\end{equation*}
for all $z^*\in\Z$. Let us further check that the mapping $\Hat{\Phi}$ satisfies all the assumptions of Theorem~\ref{Theo_Sens_general}. Indeed, it is easy to see that \eqref{convex_cond_set02} holds for $\Hat \Phi$.  The  inner
semicontinuity of $\mathcal{S}_{\Hat{\Phi}}$ follows from \cite[Corollary~5.6]{mp3}. The quasi-Lipschitzian property of $\Hat\Phi$ is implied by its integrable Lipschitzian property by \cite[Theorem~4.4]{mp3}. Finally, the qualification conditions in  \eqref{Theo_Sens_generalQC02} follows from the assumptions \eqref{Theo_Sens_general_Lipschitz-like} and \eqref{Theo_Sens_generalQC012}, respectively, by using the fact that 
\begin{align}\label{eq00}
D^\ast\Phi_t(\ox,\oz)(y^\ast )= D^\ast\Hat{\Phi}_t(\ox,\oz)(y^\ast )\;\text{ for all }\;t\in T.
\end{align}
Therefore, applying Theorem~\ref{Theo_Sens_general} to the mapping $\Hat{F}$ with taking \eqref{eq00} into account, we arrive at \eqref{incl:00} and hence complete the proof of the theorem.
\end{proof}\vspace*{-0.07in}

The second theorem of this section addresses the {\em stochastic variational system} \eqref{def_mapS} and provides an upper estimate of its coderivative under the integrable Lipschitz-like property of the normal integrand $\Phi$ therein.\vspace*{-0.05in}

\begin{theorem}\label{svi-ll} Let  $S: \X \tto \Z$ be defined in \eqref{def_mapS} via  a set-valued normal integrand $\Phi: T\times \X \times\Z \tto \Y$ and a closed-graph multifunction $G: \X\times \Z\tto \Y$. Pick $(\ox,\oz) \in \gph S$ and assume that the set $\gph S$ is normally regular at $(\ox,\oz)$ and that the mapping $\Phi$ satisfies the conditions in \eqref{convex_cond_set2} and the integrable Lipschitz-like property \eqref{eq_definition_Int_loc:Lipschitz-like} around $(\ox,\oz,\ooy)$ with $\ooy(t)= \Phi_t(\ox,\oz)$. Fix $\oy \in  -G(\ox,\oz)\cap \Intfset{\Phi}(\ox,\oz)$ and then claim that for every $z^\ast \in \Z$ and $x^\ast \in  D^\ast S(\ox,\oz) (z^\ast)$ there exists  $y^\ast \in \Y$  such that
\begin{align*}
\left( \begin{array}{c}
x^\ast \\ -z^\ast 
\end{array} \right)\in\int_T D^\ast \Phi_t(\ox,\oz) (y^\ast) \dmu +  D^\ast G(\ox,\oz,-\oy)(y^\ast)
\end{align*}
provided that the adjoint generalized equation
\begin{equation*} 
0 \in  \int_T D^\ast \Phi_t(\ox,\oz) (y^\ast) \dmu + D^\ast G(\ox,\oz,-\oy)(y^\ast)
\end{equation*}
admits only the trivial solution  $y^\ast=0$.
\end{theorem}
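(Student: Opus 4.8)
The plan is to mirror the proof of Theorem~\ref{COD:Lipschitz-like:F}, replacing the role of the constraint result Theorem~\ref{Theo_Sens_general} by the variational result Theorem~\ref{Theorem:Parametric:main}. The first step is a \emph{truncation}: proceeding as in \cite[Theorem~3E.3]{dr}, I would produce positive constants $\gamma,\eta,\ell$ so that the truncated integrand $\Hat\Phi_t(x,z):=\Phi_t(x,z)\cap\B_\gamma(\ooy(t))$ satisfies the integrable locally Lipschitzian condition \eqref{eq_definition_Int_loc} around $(\ox,\oz)$ with constants $\eta,\ell$. Forming the truncated solution map $\Hat S(x):=\{z\in\Z\mid 0\in\Intfset{\Hat\Phi}(x,z)+G(x,z)\}$, one checks that $\gph\Hat S\subseteq\gph S$ (since $\Hat\Phi_t\subseteq\Phi_t$ forces $\Intfset{\Hat\Phi}\subseteq\Intfset{\Phi}$) and that $(\ox,\oz)\in\gph\Hat S$ (since $\Phi_t(\ox,\oz)=\{\ooy(t)\}$ lies at the center of the truncation ball, so truncation is inactive at the reference point). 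With $\gph S$ normally regular at $(\ox,\oz)$, this yields the inclusion chain
$$
D^\ast S(\ox,\oz)(z^\ast)=\Hat D^\ast S(\ox,\oz)(z^\ast)\subseteq\Hat D^\ast\Hat S(\ox,\oz)(z^\ast)\subseteq D^\ast\Hat S(\ox,\oz)(z^\ast),
$$
reducing the problem to computing the coderivative of $\Hat S$.

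Second, I would verify that $\Hat\Phi$ meets every hypothesis of Theorem~\ref{Theorem:Parametric:main}. The boundedness half of \eqref{convex_cond_set02} is automatic because $\Hat\Phi_t(x,z)\subseteq\B_\gamma(\ooy(t))$, while convexity is inherited from \eqref{convex_cond_set2} since truncation intersects with a convex ball; inner semicompactness of $\mathcal{S}_{\Hat\Phi}$ follows from \cite[Corollary~5.6]{mp3}, and the integrable quasi-Lipschitzian property of $\Hat\Phi$ follows from its integrable local Lipschitzness via \cite[Theorem~4.4]{mp3}. The key local identity is
$$
D^\ast\Hat\Phi_t(\ox,\oz)(y^\ast)=D^\ast\Phi_t(\ox,\oz)(y^\ast)\quad\text{for all }t\in T,
$$
valid because $\ooy(t)$ is interior to $\B_\gamma(\ooy(t))$, so the graphs of $\Hat\Phi_t$ and $\Phi_t$ coincide near the reference triple and the coderivative is a purely local construction. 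Since $\Phi_t(\ox,\oz)$ is single-valued, the selection set $\mathcal{S}_\Phi(\ox,\oz,\oy)$ is the singleton $\{\ooy\}$, so the union in the adjoint generalized equation \eqref{adj_gen_eq} collapses to a single term; combined with the identity above, the trivial-solution hypothesis of the present theorem is exactly the nontriviality condition demanded by Theorem~\ref{Theorem:Parametric:main} for $\Hat S$.

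Finally, applying Theorem~\ref{Theorem:Parametric:main} to $\Hat S$ produces the coderivative inclusion with $\Hat\Phi$ in place of $\Phi$; invoking the displayed identity once more to replace $\Hat\Phi_t$ by $\Phi_t$ delivers the claimed estimate. I expect the \emph{main obstacle} to be the truncation step: establishing that the integrable Lipschitz-like property of $\Phi$ around $(\ox,\oz,\ooy)$ upgrades, after intersecting with the balls $\B_\gamma(\ooy(t))$, to the stronger (global-in-$\Y$) integrable locally Lipschitzian property with a \emph{uniform} constant $\ell$ and measurable control of $\gamma$; this is the quantitative heart of the argument and requires a careful measurable adaptation of the deterministic truncation lemma \cite[Theorem~3E.3]{dr}. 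Everything thereafter is the collapse of the selection set to a singleton and a routine application of the already-established coderivative Leibniz calculus.
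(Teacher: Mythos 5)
Your proposal follows essentially the same route as the paper's own proof: truncate the integrand to obtain an integrably locally Lipschitzian mapping $\Hat{\Phi}_t$, pass from $S$ to the truncated system $\Hat{S}$ using the normal regularity of $\gph S$ and the coderivative chain $D^\ast S(\ox,\oz)(z^\ast)=\Hat{D}^\ast S(\ox,\oz)(z^\ast)\subseteq\Hat{D}^\ast\Hat{S}(\ox,\oz)(z^\ast)\subseteq D^\ast\Hat{S}(\ox,\oz)(z^\ast)$, and then apply Theorem~\ref{Theorem:Parametric:main} together with the local coderivative identity $D^\ast\Hat{\Phi}_t(\ox,\oz)(y^\ast)=D^\ast\Phi_t(\ox,\oz)(y^\ast)$, exactly as the paper does by reduction to the proof of Theorem~\ref{COD:Lipschitz-like:F}. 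Your centering of the truncation ball at $\ooy(t)$ (rather than the paper's literal $\mathbb{B}_\gamma(\ox,\oz)$, evidently a misprint) and your observation that single-valuedness of $\Phi_t(\ox,\oz)$ collapses $\mathcal{S}_\Phi(\ox,\oz,\oy)$ to the singleton $\{\ooy\}$ are both correct and consistent with the paper's intent.
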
\vspace*{-0.05in}
\begin{proof}
Following the proof of Theorem~\ref{COD:Lipschitz-like:F}, we find a constant $\gamma>0$ such that the mapping $\Hat{\Phi}_t(x,z):= \Phi_t(x,z) \cap \mathbb{B}_{\gamma}(\ox,\oz)$ is integrably Lipschitzian around $(\ox,\oz)$. Define further the truncated stochastic variational system
\begin{equation}\label{S-trunc}
\Hat{S}(x):=\big\{ z \in \Z\;\big|\;0\in\Intfset{\hat{\Phi}}(x,z)+ G(x,z)\big\}.
\end{equation}
Then proceeding as in the proof of Theorem~\ref{COD:Lipschitz-like:F}, we  reduce the situation to the usage of Theorem~\ref{Theorem:Parametric:main} in the case of the truncated system \eqref{S-trunc}. 
\end{proof}\vspace*{-0.35in}
	
\section{Applications to problems of stochastic optimization}\label{Appli_Stochastic}\setcounter{equation}{0}\vspace*{-0.05in}

The concluding section of this paper addresses applications of the coderivative-based sensitivity analysis of general stochastic variational systems developed above to some important aspects of stochastic optimization. We split this section into two subsections. The first one deals with {\em stationary solution maps} for general problems of {\em constrained stochastic programming}, while in  the second subsection we derive new coderivative-type {\em necessary optimality conditions} for problems of {\em stochastic programming with equilibrium constraints}.\vspace*{-0.25in}

\subsection{\bf Coderivatives of solution maps in stochastic programming}\vspace*{-0.05in}

Given a normal integrand $f: T\times \X\times \Z\to \Rex$ and an extended-real-valued l.s.c.\ function $\psi :\X\times \Z \to\Rex$, consider the following class of {\em parametric stochastic programs} defined in the unconstrained format
\begin{align}\label{Problem01}
\min\limits_{ z\in \Z} \Intf{f}(x,z) + \psi(x,z)  
\end{align}
with the parameter $x\in\X$. Since $\psi$ is an extended-real-valued function, the model in \eqref{Problem01} implicitly encompasses problems of {\em constrained} stochastic programming defined in the form
\begin{align}\label{csp}
\min \Intf{f}(x,z) & \hspace{0.2cm}\text{ subject to }\;g (x,z) \in \mathcal{C},
\end{align}
where $\mathcal{C}\subseteq\Y$ is a closed set, and where $g\colon\X\times
\Z\to\Y$ is a continuous function. Indeed, the latter problem reduces to \eqref{Problem01} with $\psi:=\delta_{\mathcal{C}}\circ g$. In particular, programs of type \eqref{csp} include problems of {\em stochastic conic programming}, where $C$ is a closed convex cone; see, e.g., \cite{bs,sdr} and the references therein.

Applying the subdifferential Fermat and sum rules from \cite{m18} to local minimizers of \eqref{Problem01} under appropriate qualification conditions and then using the basic subdifferential Leibniz rule from \cite{mp3} for the expected-integral functional $\Intf{f}$, we get a collection of stationary points of the stochastic programming problem \eqref{Problem01} formalized as the {\em parametric stationary point map}
\begin{align}\label{def_mapS02}
S(x):=\Bigg\{  z \in \Z\;\Bigg|\; 0  \in   \int_T \partial_z f_t(x ,z) \dmu   +\partial_z \psi(x,z )\Bigg\}.
\end{align}
Note that for {\em convex} stochastic programs \eqref{Problem01}, the stationary point map \eqref{def_mapS02} equivalently describes the  parametric set of {\em optimal solutions} to \eqref{Problem01}, which is not the case for general stochastic problems of our interest here. Observe furthermore that \cite[Proposition~7.1]{mp3} (see also \cite{chp1,chp2,chp3} for related results) ensures under some additional assumptions that there exists $\eta>0$ such that  
\begin{align*}
\partial_z \Intf{f}(x,z)	=  \int_T \partial_z f_t(x ,z) \dmu, \text{  for   all }(x,z)\in \mathbb{B}_\eta(\ox,\oz).
\end{align*}
This tells us that \eqref{def_mapS02} describes the set of {\em M-stationary points} for \eqref{Problem01} that are important for various applications; see, e.g., \cite{hr} with other references.\vspace*{0.03in}

To proceed with the study of the stationary point map \eqref{def_mapS02} by reducing it to the stochastic variational system \eqref{def_mapS} investigated in Section~\ref{Sensitivity_variational}, we first formulate the following assumptions on the initial data of \eqref{Problem01} ensuring the fulfillment of those in \eqref{convex_cond_set02}:
there exist  $\eta>0$ and $\kappa \in \Leb^1(T,\R)$ such that 
\begin{equation}\label{cond_reg_sub0} 
\begin{aligned}
\partial_z f_t (x,z)  &\text{ is convex for all } (x,z)\in \mathbb{B}_\eta(\ox,\oz) \text{ and }  t\in T_{na},
\\
\partial_z f_t (x,z) &\subseteq \kappa(t) \mathbb{B} \text{  for all } (x,z) \in \mathbb{B}_\rho(\ox,\oz) \text{ and all } t\in T,
\end{aligned}
\end{equation}
where $T_{na }$ signifies the nonatomic part of $\mu$. Define the
set-valued mapping
\begin{align*}
\mathcal{S}_{f}(x,z,y):=\Bigg\{ \y \in \Leb^1(T,\Y)\;\Bigg|\;\int_T \y(t)\dmu =y \text{ and } \y(t) \in \partial_z f_t(x,z) \text{ a.e.}\Bigg\}.
\end{align*}

Our goal here is to present an efficient evaluation of the limiting coderivative of the stationary point map \eqref{def_mapS02} in terms of its initial data. The obtained results allow us to make a conclusion on the Lipschitz-like property of \eqref{def_mapS02}, which we skip for brevity. Recall that the {\em subdifferential structure} of the mapping $G(x,z):=\partial_z\psi(x,z)$ in \eqref{def_mapS02} prevents the fulfillment of the metric regularity property of this variational systems  {\em unless} $\psi$ is sufficiently smooth; see the discussions in Section~\ref{Sensitivity_variational} and Corollary~\ref{svi-mr} given below.\vspace*{0.03in}

According to Theorem~\ref{Theorem:Parametric:main}, we are going to apply the integrable quasi-Lipschitzian property \eqref{Int_Lips_like_inq} of the partial subdifferential mapping $(x,z)\mapsto\partial_z f_t(x,z)$, while indicating the possibility of using also the integrable Lipschitz-like property \eqref{eq_definition_Int_loc:Lipschitz-like} as in Theorem~\ref{svi-ll}, which is not done in this paper. The version of \eqref{Int_Lips_like_inq} corresponding to the structure of \eqref{def_mapS02} is formulated via the {\em partial second-order subdifferential} \eqref{2par} of $f$ as follows: there exist $\ell \in \Leb^1(T,\X)$ and  $\eta >0$  such that we have the second-order subdifferential estimate
\begin{align}\label{Extended:second_order_ Int_Lips_like_inq}
\sup\big\{ \| x^\ast\|\;\big|\;x^\ast \in{ \partial}_z^2f_t\big(\x(t),\y(t)\big)\big(\y^\ast(t)\big)\big\} \leq \ell(t)\|\y^\ast(t)\| \text{ a.e. }
\end{align}
for all $\x \in \mathbb{B}_\eta(\ox)$, $\y \in \mathbb{B}_\eta(\ooy)\cap \partial f(\x)$, and $\y^\ast\in \Leb^\infty(T,\Y)$, where 
\begin{align*}
\mathbb{B}_\eta(\ooy)\cap \partial f(\x)&:=\big\{ \y\in \Leb^1(T,\Y)\;\big|\;\y \in \mathbb{B}_\eta(\ooy) \text{ and } \y(t) \in \partial f_t\big(\x(t)\big)\text{ a.e.}\big\}.
\end{align*}\vspace*{-0.25in}

\begin{theorem}\label{Cor:Parametric:mapS02}
Let  $S: \X \tto \Z$ be given in \eqref{def_mapS02} with $(\ox,\oz) \in \gph S$, where $f$ satisfies \eqref{cond_reg_sub0}, and  where the mapping $(x,z) \to  - \partial_z \psi (x,z ) \cap \Intfset{ \partial_z f} (x,z)   $ is locally bounded around $(\ox,\oz)$. Assume furthermore that for all $ \y \in \mathcal{S}_{ \Phi} (\ox,\oz,\oy)$   with $\oy \in   -   \partial_z\psi(\ox,\oz ) \cap \Intfset{ \partial_z f} (\ox,\oz) $ we have the following conditions:\vspace*{-0.05in}
\begin{enumerate}[label=\alph*)]
\item[\bf(a)]\label{Cor:Parametric:mapS02:a} The mapping $\mathcal{S}_f$ is inner semicompact at $(\ox,\oz,\oy)$.
\item[\bf(b)]\label{Cor:Parametric:mapS02:b} The second-order subdifferential estimate \eqref{Extended:second_order_ Int_Lips_like_inq} is satisfied.
\end{enumerate}\vspace*{-0.05in}
Then whenever $z^\ast \in \Z$ and $x^\ast \in  D^\ast S(\ox,\oz) (z^\ast)$, there exist $y^\ast \in \Y$ and $\oy\in -\partial_z\psi(\ox,\oz ) \cap \Intfset{ \partial_z f} (\ox,\oz) $ such that
\begin{align*}
\left(\begin{array}{c}
x^\ast \\ -z^\ast 
\end{array} \right)\in \bigcup\limits_{ \y \in \mathcal{S}_{ f} (\ox,\oz,\oy)  } \int_T{\partial}_z^2 f_t     (\ox,\oz,\y(t)) (y^\ast) \dmu + {\partial}_z^2 (g \circ \psi  ) (\ox,\oz,-\oy)(y^\ast)
\end{align*}
provided that for all $\oy \in -   \partial_z\psi(\ox,\oz ) \cap \Intfset{ \partial_z f} (\ox,\oz)$  the second-order adjoint generalized equation defined by 
\begin{equation*} 
0 \in \bigcup\limits_{ \y \in \mathcal{S}_{ f} (\ox,\oz,\oy) } \int_T {\partial}_z^2 f_t(\ox,\oz,\y(t)) (y^\ast) \dmu + {\partial}_z^2  \psi  (\ox,\oz,-\oy)(y^\ast)
\end{equation*}
admits only the trivial	solution  $y^\ast=0$.
\end{theorem}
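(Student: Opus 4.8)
The plan is to recognize \eqref{def_mapS02} as a stochastic variational system of the form \eqref{def_mapS} and to apply Theorem~\ref{Theorem:Parametric:main}. Concretely, I would set the set-valued normal integrand $\Phi_t(x,z):=\partial_z f_t(x,z)$ and take $G(x,z):=\partial_z\psi(x,z)$ in the role of the closed-graph multifunction of \eqref{def_mapS}, identifying the codomain $\Y$ with $\Z$. Then $\Intfset{\Phi}(x,z)=\int_T\partial_z f_t(x,z)\dmu$, so the map $S$ in \eqref{def_mapS02} is exactly the solution map \eqref{def_mapS} for this data. That $\Phi$ qualifies as a set-valued normal integrand follows from the graph measurability of the subdifferential mapping in Lemma~\ref{lemma_measurability_reg_sub}(a), and one notes that $\mathcal{S}_f$ is precisely the selection map $\mathcal{S}_\Phi$ of \eqref{mapping:SPHI} for this choice of $\Phi$.

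First I would verify that the hypotheses of Theorem~\ref{Theorem:Parametric:main} hold. The standing condition \eqref{convex_cond_set02} for $\Phi=\partial_z f$ is precisely \eqref{cond_reg_sub0}. The inner semicompactness of $\mathcal{S}_\Phi=\mathcal{S}_f$ at $(\ox,\oz,\oy)$ is assumption~(a). For the integrable quasi-Lipschitzian requirement \eqref{Int_Lips_like_inq} I would invoke the partial second-order subdifferential \eqref{2par}, which gives $D^\ast\Phi_t(\x(t),\y(t))(\y^\ast(t))=\partial_z^2 f_t(\x(t),\y(t))(\y^\ast(t))$; under this identification \eqref{Int_Lips_like_inq} reads verbatim as the second-order estimate \eqref{Extended:second_order_ Int_Lips_like_inq} assumed in~(b).

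Next I would translate the coderivatives through \eqref{2nd} and \eqref{2par}. Since $D^\ast\Phi_t(\ox,\oz,\y(t))(y^\ast)=\partial_z^2 f_t(\ox,\oz,\y(t))(y^\ast)$ and $D^\ast G(\ox,\oz,-\oy)(y^\ast)=\partial_z^2\psi(\ox,\oz,-\oy)(y^\ast)$, the adjoint generalized equation \eqref{adj_gen_eq} for this data coincides with the second-order adjoint generalized equation in the statement, so its triviality hypothesis transfers directly. Theorem~\ref{Theorem:Parametric:main} then yields, for each $x^\ast\in D^\ast S(\ox,\oz)(z^\ast)$, the existence of $y^\ast\in\Y$ and of a multiplier $\oy\in-\partial_z\psi(\ox,\oz)\cap\Intfset{\partial_z f}(\ox,\oz)$ for which the asserted inclusion holds with $\partial_z^2 f_t$ and $\partial_z^2\psi$ replacing $D^\ast\Phi_t$ and $D^\ast G$.

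The delicate point is not the symbolic substitution but the control of the intermediate multiplier $\oy$. The proof of Theorem~\ref{Theorem:Parametric:main} builds $N((\ox,\oz);\gph S)$ through an outer limit via \cite[Corollary~3.13]{m18}, so the multipliers $\oy_k\in-\partial_z\psi(x_k,z_k)\cap\Intfset{\partial_z f}(x_k,z_k)$ arising along sequences $(x_k,z_k)\to(\ox,\oz)$ must possess a convergent subsequence for the union over $\oy$ in the conclusion to be attained. This is exactly what the extra hypothesis---local boundedness of $(x,z)\mapsto-\partial_z\psi(x,z)\cap\Intfset{\partial_z f}(x,z)$ around $(\ox,\oz)$---secures, and I would check that it, together with the inner semicompactness of $\mathcal{S}_f$, legitimately closes the limiting argument. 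A secondary technical check is the graph measurability of the partial subdifferential integrand $t\mapsto\gph\partial_z f_t$, which I would obtain from the selection and measurability machinery underlying Lemma~\ref{lemma_measurability_reg_sub}.
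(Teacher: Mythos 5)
Your proposal is correct and follows essentially the same route as the paper: the paper's proof likewise applies Theorem~\ref{Theorem:Parametric:main} with $\Phi_t:=\partial_z f_t$ and $G:=\partial_z\psi$, reading \eqref{cond_reg_sub0} as \eqref{convex_cond_set02}, the estimate \eqref{Extended:second_order_ Int_Lips_like_inq} as the integrable quasi-Lipschitz condition \eqref{Int_Lips_like_inq}, and the second-order constructions \eqref{2nd}--\eqref{2par} as the coderivatives of $\Phi_t$ and $G$, exactly as you do. The only cosmetic difference is that the paper formally truncates both $\Phi_t$ and $G$ to the ball $\mathbb{B}_\eta(\ox,\oz)$ (declaring them empty outside), a harmless localization since the coderivative at $(\ox,\oz)$ is a local object; your explicit discussion of how the local boundedness of $(x,z)\mapsto-\partial_z\psi(x,z)\cap\Intfset{\partial_z f}(x,z)$ secures convergence of the intermediate multipliers is a point the paper leaves implicit.
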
\vspace*{-0.05in}
\begin{proof} It suffices to check that all the assumptions of Theorem~\ref{Theorem:Parametric:main} hold for the mapping $\Phi$ and $G$ therein defined by
\begin{align*}
\Phi_t(x,z):&=\left\{ \begin{array}{cc}
\partial_z f_t(x,z)&  \text{ if } (x,z) \in \mathbb{B}_\eta(\ox,\oz)\\
\emp&  \text{ if } (x,z) \notin \mathbb{B}_\eta(\ox,\oz)
\end{array}\right.,\\
G(x,z):&=\left\{ \begin{array}{cc}
\partial_z \psi(x,z)&  \text{ if }(x,z) \in \mathbb{B}_\eta(\ox,\oz)\\
\emptyset &  \text{ if } (x,z) \notin \mathbb{B}_\eta(\ox,\oz)
\end{array}\right..
\end{align*}
This readily follows from the definitions and constructions above.
\end{proof}\vspace*{-0.05in}

Finally, we present a simple consequence of Theorem~\ref{Cor:Parametric:mapS02} ensuring the metric regularity of the stationary point map \eqref{def_mapS02} in the case of {\em ${\cal C}^{1,1}$ functions} $\psi$ therein. Recall that this class consists of continuously differentiable functions with locally Lipschitzian gradients around the reference point.\vspace*{-0.05in} 

\begin{corollary}\label{svi-mr} In addition to the assumption of Theorem~{\rm\ref{Cor:Parametric:mapS02}}, suppose that $\psi$ does not depends on $x$ being of class ${\cal C}^{1,1}$ around $\oz$, and that
\begin{align*}
\ker \left[ \proj_{\X}\left(\bigcup\limits_{ \y \in \mathcal{S}_{ f} (\ox,\oz,\oy)  } \int_T{\partial}_z^2 f_t\big(\ox,\oz,\y(t)\big)(\cdot)\dmu  \right)
\right]=\{0\}, 
\end{align*}
where  $\oy: =-\nabla  \psi   (\oz ) $.	Then $S$ in \eqref{def_mapS02} is metrically regular around $(\ox,\oz)$.
\end{corollary}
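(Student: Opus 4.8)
The plan is to prove metric regularity via the coderivative characterization \eqref{mr}, that is, to show that $\ker D^\ast S(\ox,\oz)=\{0\}$, and to build this on top of the coderivative upper estimate supplied by Theorem~\ref{Cor:Parametric:mapS02}. I would begin by recording what the extra hypotheses buy us. Since $\psi$ does not depend on $x$ and is of class $\mathcal{C}^{1,1}$ around $\oz$, the mapping $G(z):=\partial_z\psi(z)=\nabla\psi(z)$ is single-valued and locally Lipschitzian; hence the reference selection $\oy:=-\nabla\psi(\oz)$ is uniquely determined, and the coderivative criterion \eqref{cod-cr} gives $D^\ast G(\oz,-\oy)(0)=\{0\}$.

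Before invoking Theorem~\ref{Cor:Parametric:mapS02} I would confirm that its second-order adjoint generalized equation admits only the trivial solution, which is precisely what activates the coderivative estimate there. As $G$ is independent of $x$, its coderivative contributes nothing to the $\X$-component, so projecting the adjoint equation onto $\X$ reduces it to $0\in\proj_{\X}\bigl(\bigcup_{\y}\int_T\partial_z^2 f_t(\ox,\oz,\y(t))(y^\ast)\dmu\bigr)$, and the standing kernel hypothesis forces $y^\ast=0$. With the estimate thus available, I would pick any $z^\ast$ with $0\in D^\ast S(\ox,\oz)(z^\ast)$ and apply it with $x^\ast=0$ to obtain $\oy$, a selection $\y\in\mathcal{S}_f(\ox,\oz,\oy)$, and $y^\ast\in\Y$ satisfying
$$
\begin{pmatrix} 0\\ -z^\ast\end{pmatrix}\in\int_T\partial_z^2 f_t(\ox,\oz,\y(t))(y^\ast)\dmu+\begin{pmatrix}0\\ D^\ast G(\oz,-\oy)(y^\ast)\end{pmatrix}.
$$
Reading off the $\X$-component and applying the kernel hypothesis a second time yields $y^\ast=0$.

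It then remains to push $y^\ast=0$ through the $\Z$-component. On one side, $D^\ast G(\oz,-\oy)(0)=\{0\}$ by the Lipschitzian structure of $\nabla\psi$; on the other side, the second-order subdifferential estimate \eqref{Extended:second_order_ Int_Lips_like_inq}, evaluated along the selection $\y$ with dual argument $\y^\ast\equiv 0$, bounds the norm of every element of $\partial_z^2 f_t(\ox,\oz,\y(t))(0)$ by $\ell(t)\cdot 0=0$, so the integral term collapses to the origin. Hence the $\Z$-component of the displayed inclusion reads $-z^\ast=0$, giving $z^\ast=0$. This establishes $\ker D^\ast S(\ox,\oz)=\{0\}$, and metric regularity of $S$ around $(\ox,\oz)$ follows from \eqref{mr}.

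The main point requiring care is that one cannot simply quote Theorem~\ref{Corollary:Parametric:main}(ii): in the present setting $\Phi_t=\partial_z f_t$ is genuinely set-valued and is only integrably quasi-Lipschitzian through \eqref{Extended:second_order_ Int_Lips_like_inq}, rather than single-valued and integrably locally Lipschitzian as that assertion demands. The kernel argument therefore has to be run directly on the set-valued estimate of Theorem~\ref{Cor:Parametric:mapS02}, and I would take care that the projection onto $\X$ interacts correctly with the union over $\y\in\mathcal{S}_f$ and with the integral, and that the bound \eqref{Extended:second_order_ Int_Lips_like_inq} is applied precisely to the selection $\y$ that realizes the inclusion at $y^\ast=0$.
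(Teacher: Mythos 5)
Your proof is correct and takes essentially the same route as the paper: the paper's own one-line proof likewise combines the coderivative upper estimate of Theorem~\ref{Cor:Parametric:mapS02} with the kernel argument from the proof of Theorem~\ref{Corollary:Parametric:main} to verify $\ker D^\ast S(\ox,\oz)=\{0\}$ and then concludes via the characterization \eqref{mr}. Your explicit verifications---deriving triviality of the second-order adjoint generalized equation from the kernel hypothesis since $G$ contributes nothing to the $\X$-component, and collapsing the integral term at $y^\ast=0$ through \eqref{Extended:second_order_ Int_Lips_like_inq} together with $D^\ast\nabla\psi(\oz)(0)=\{0\}$---simply spell out what the paper compresses into ``we readily check.''
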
\vspace*{-0.05in}
\begin{proof}
Using Theorem~\ref{Cor:Parametric:mapS02} and the proof of Theorem~\ref{Corollary:Parametric:main}, we readily check that $\ker D^\ast S(\ox,\oz)=\{ 0\}$, which yields the metric regularity of $S$ around $(\ox,\oz)$ by the coderivative characterization in \eqref{mr}. 
\end{proof}\vspace*{-0.35in}
 
\subsection{\bf Stochastic programming  with equilibrium constraints}\vspace*{-0.05in}

Various versions of {\em stochastic mathematical programs with equilibrium constraints} (stochastic MPECs) have been formulated and investigated in the literature with numerous applications to practical models; see, e.g.,  \cite{hr,pw,sdr} and the references therein. the underlying feature of stochastic MPECs in the presence among constraints stochastic generalized equations of type \eqref{def_mapS} and their specifications, which can be interpreted as certain equilibrium conditions. In this subsection we consider a general class of such problems given by
\begin{equation}\label{Proble:MPECs}
\begin{aligned}
\min \varphi(x,z)
\text{ subject to }  z \in S(x), \; x \in \mathcal{C},
\end{aligned}
\end{equation}
where $S(x)$ is a stochastic parametric variational system taken from \eqref{def_mapS} in the setting described therein, where 
$\varphi: T\times \X\times \Z\to \Rex$ is an l.s.c.\ cost function, and where ${\cal C}$ is a closed set. Our goal here is to derive {\em necessary optimality conditions} for such stochastic MPECs by using the basic machinery of variational analysis and the coderivative evaluation for the stochastic system \eqref{def_mapS} obtained in Section~\ref{Sensitivity_variational}. 

To formulate desired necessary optimality conditions for the stochastic MPEC \eqref{Proble:MPECs} in full generality, we need to recall yet another subdifferential construction for extended-real-valued function. Given 
$f\colon\X\to\oR$ finite at $x$, its {\em singular subdifferential} at this point is defined geometrically via the limiting normal cone \eqref{lnc} to the epigraph of $f$ by
\begin{equation}\label{ss}
\partial^\infty f(x):=\big\{x^*\in\X\;\big|\;(x^*,0)\in N\big((x,f(x);\epi f\big)\big\}.
\end{equation}
We refer the reader to the books \cite{m06,m18,rw} and the bibliographies therein for various properties, analytic representations, and calculus rules for the singular subdifferential \eqref{ss}. Note that if $f$ is l.s.c.\ around $x$, then $\partial^\infty f(x)=\{0\}$ {\em if and only if} $f$ is locally Lipschitzian around this point.\vspace*{0.03in} 

Now we are ready to establish the aforementioned necessary optimality conditions for the stochastic MPEC \eqref{Proble:MPECs}.\vspace*{-0.05in}

\begin{theorem}\label{nec:Proble:MPECs}
Let $(\ox,\oz) \in \X\times \Y$ be a local optimal solution to \eqref{Proble:MPECs}. In addition to the assumption of 
Theorem~{\rm\ref{Theorem:Parametric:main}} with $D^*S(\ox,\oz)$ evaluated therein, suppose that $(x^*_\infty,z^*_\infty)=(0,0)$ is the only pair satisfying the inclusions
\begin{align}\label{QC:nec:Proble:MPECs}
(x_\infty^\ast,z^\ast_\infty )\in \partial^\infty \varphi(\ox,\oz)\;\mbox{ and }\;0 \in x^\ast_\infty + D^\ast S(\ox,\oz) (z^\ast_\infty ) + N( \ox;\mathcal{C}),
\end{align} which are automatically fulfilled if $\varphi$ is locally Lipschitzian around $(\ox,\oz)$. Then there exist elements $(x^\ast, z^\ast) \in \partial \varphi(\ox,\oz)$, $y^\ast \in \Y$, $\oy \in -G(\ox,\oz)\cap \Intfset{\Phi}(\ox,\oz)$,  and $\y \in \mathcal{S}_{ \Phi} (\ox,\oz,\oy)$ such that we have
\begin{align*}
0&\in \left( \begin{array}{c}
x^\ast \\ z^\ast 
\end{array} \right)+ D^\ast G(\ox,\oz,-\oy)(y^\ast) + \int_T D^\ast \Phi_t\big(\ox,\oz,\y(t)\big)(y^\ast) \dmu +\begin{pmatrix}
N(\ox;\mathcal{C})\\ 0
\end{pmatrix}.
\end{align*}
\end{theorem}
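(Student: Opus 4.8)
The plan is to strip off the stochastic structure only at the very end: first I would treat \eqref{Proble:MPECs} as an abstract constrained minimization problem and derive a coderivative-form necessary condition involving $D^\ast S(\ox,\oz)$, and only then substitute the upper estimate of $D^\ast S(\ox,\oz)$ furnished by Theorem~\ref{Theorem:Parametric:main}. Since $(\ox,\oz)$ is a local minimizer of $\varphi$ over the feasible set $\Omega:=\gph S\cap(\mathcal{C}\times\Z)$, it minimizes the unconstrained sum $\varphi+\delta(\cdot;\gph S)+\delta(\cdot;\mathcal{C}\times\Z)$ on $\X\times\Z$, so the generalized Fermat rule gives $0\in\partial\big[\varphi+\delta(\cdot;\gph S)+\delta(\cdot;\mathcal{C}\times\Z)\big](\ox,\oz)$. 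Here I would first record that $\gph S$ is locally closed around $(\ox,\oz)$ under the standing assumptions, exactly as noted in the proof of Theorem~\ref{Corollary:Parametric:main}, so that the limiting calculus of \cite{m18} applies.

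First I would invoke the subdifferential sum rule of \cite[Theorem~2.19]{m18}, using $\partial\delta(\cdot;\gph S)=\partial^\infty\delta(\cdot;\gph S)=N(\cdot;\gph S)$ together with $N((\ox,\oz);\mathcal{C}\times\Z)=N(\ox;\mathcal{C})\times\{0\}$ (because $N(\oz;\Z)=\{0\}$), to obtain
\[
0\in\partial\varphi(\ox,\oz)+N\big((\ox,\oz);\gph S\big)+N(\ox;\mathcal{C})\times\{0\}.
\]
The qualification condition needed here is that the only singular combination of these terms summing to zero is the trivial one; translating the graphical normal cone through the coderivative definition \eqref{lcod} — namely $(a,b)\in N((\ox,\oz);\gph S)$ iff $a\in D^\ast S(\ox,\oz)(-b)$ — converts that requirement into precisely the two inclusions of \eqref{QC:nec:Proble:MPECs} imposed on $(x^\ast_\infty,z^\ast_\infty)\in\partial^\infty\varphi(\ox,\oz)$. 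This also explains why the condition is vacuous when $\varphi$ is locally Lipschitzian, since then $\partial^\infty\varphi(\ox,\oz)=\{0\}$.

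Next I would reformulate the split inclusion. Choosing $(x^\ast,z^\ast)\in\partial\varphi(\ox,\oz)$, $c\in N(\ox;\mathcal{C})$, and an element of $N((\ox,\oz);\gph S)$ summing to zero, the second component fixes the dual variable $z^\ast$ while the first gives $-(x^\ast+c)\in D^\ast S(\ox,\oz)(z^\ast)$, i.e. $0\in x^\ast+D^\ast S(\ox,\oz)(z^\ast)+N(\ox;\mathcal{C})$. At this point the hypotheses inherited from Theorem~\ref{Theorem:Parametric:main} are in force, so I substitute its evaluation: there exist $y^\ast\in\Y$, $\oy\in-G(\ox,\oz)\cap\Intfset{\Phi}(\ox,\oz)$, and $\y\in\mathcal{S}_\Phi(\ox,\oz,\oy)$ with the selected element of $D^\ast S(\ox,\oz)(z^\ast)$ lying in $D^\ast G(\ox,\oz,-\oy)(y^\ast)+\int_T D^\ast\Phi_t(\ox,\oz,\y(t))(y^\ast)\dmu$. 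Restoring both components and the term $N(\ox;\mathcal{C})\times\{0\}$ then yields the asserted inclusion.

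The hard part will be the bookkeeping of qualification conditions rather than any single calculus step: one must confirm that the single requirement \eqref{QC:nec:Proble:MPECs} genuinely suffices to run the three-term sum rule, i.e. that after the coderivative reformulation no residual intersection-type qualification for $\gph S\cap(\mathcal{C}\times\Z)$ remains uncovered, and that the multiplier $y^\ast$ produced by Theorem~\ref{Theorem:Parametric:main} is the common one appearing in both coderivative terms. A secondary care point is to keep the sign and component conventions consistent between the identity \eqref{lcod} and the $\begin{pmatrix}\cdot\\-z^\ast\end{pmatrix}$ format of the evaluation in Theorem~\ref{Theorem:Parametric:main}, so that in the end $N(\ox;\mathcal{C})$ lands only in the $\X$-component.
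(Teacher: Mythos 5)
Your overall architecture is the same as the paper's, which also proceeds in exactly your two stages; the difference is that the paper does not re-derive the abstract first step at all. It quotes \cite[Theorem~5.34]{m06} wholesale to produce $(x^\ast,z^\ast)\in\partial\varphi(\ox,\oz)$ and $w^\ast\in D^\ast S(\ox,\oz)(z^\ast)$ with $0\in x^\ast+w^\ast+N(\ox;\mathcal{C})$ under \eqref{QC:nec:Proble:MPECs}, and then substitutes the coderivative upper estimate of Theorem~\ref{Theorem:Parametric:main}, precisely as in your final paragraph. Your second stage is correct and matches the paper, including the sign bookkeeping ($b=-z^\ast$ forcing $a\in D^\ast S(\ox,\oz)(z^\ast)$) and your worry about the multiplier: Theorem~\ref{Theorem:Parametric:main} delivers a single $y^\ast$ serving both coderivative terms, so that point is unproblematic.

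The genuine gap is in your first stage, and it is exactly the one you flagged without closing. The three-term sum rule for $\varphi+\delta(\cdot;\gph S)+\delta(\cdot;\mathcal{C}\times\Z)$ requires that \emph{every} triple of singular subgradients summing to zero be entirely trivial. Your coderivative translation covers the alternatives with $(x^\ast_\infty,z^\ast_\infty)\neq(0,0)$, but it misses the residual case $(x^\ast_\infty,z^\ast_\infty)=(0,0)$, $b=0$, $a=-c\neq 0$, i.e.\ a nonzero element of $D^\ast S(\ox,\oz)(0)\cap\big(-N(\ox;\mathcal{C})\big)$. Condition \eqref{QC:nec:Proble:MPECs} says nothing about this case: the pair $(0,0)$ always belongs to $\partial^\infty\varphi(\ox,\oz)$ and satisfies $0\in x^\ast_\infty+D^\ast S(\ox,\oz)(z^\ast_\infty)+N(\ox;\mathcal{C})$ trivially, so the implication is vacuous there. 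Hence your claim that the sum-rule qualification converts into ``precisely'' \eqref{QC:nec:Proble:MPECs} is inexact; equivalently, splitting $N\big((\ox,\oz);\gph S\cap(\mathcal{C}\times\Z)\big)$ needs the extra intersection condition $D^\ast S(\ox,\oz)(0)\cap\big(-N(\ox;\mathcal{C})\big)=\{0\}$, which does not follow from \eqref{QC:nec:Proble:MPECs} alone. The paper sidesteps this entirely because \cite[Theorem~5.34]{m06} is stated for exactly this abstract MPEC format under the stated qualification. To make your route self-contained you must either additionally impose or verify the residual condition (it holds automatically, e.g., when $S$ is Lipschitz-like around $(\ox,\oz)$ by the criterion \eqref{cod-cr}, or when $\mathcal{C}=\X$), or else reproduce the finer argument behind the cited theorem rather than invoking the generic three-term sum rule.
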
\vspace*{-0.05in}
\begin{proof} It follows from \cite[Theorem~5.34]{m06} that there exist dual vectors $(x^\ast, z^\ast) \in \partial \varphi(\ox,\oz)$ and $w^\ast \in D^\ast S(\ox,\oz)(z^\ast) $ for which
\begin{align}\label{nec:Proble:MPECs:Eq01}
0 \in x^\ast +w^\ast + N(\ox;\mathcal{C})
\end{align}
provided that the qualification condition \eqref{QC:nec:Proble:MPECs} is satisfied. Now using the coderivative evaluation from Theorem~\ref{Theorem:Parametric:main}, we find $y^\ast \in \Y$, $\oy \in -G(\ox,\oz)\cap\Intfset{\Phi}(\ox,\oz)$, and $\y \in \mathcal{S}_{ \Phi} (\ox,\oz,\oy)$ such that
\begin{align}\label{nec:Proble:MPECs:Eq02}
\begin{pmatrix}
w^\ast \\ -z^\ast
\end{pmatrix} \in D^\ast G(\ox,\oz,-\oy)(y^\ast) + \int_T D^\ast \Phi_t\big(\ox,\oz,\y(t)\big)(y^\ast) \dmu.
\end{align}
Combining finally the relationships in \eqref{nec:Proble:MPECs:Eq01} and \eqref{nec:Proble:MPECs:Eq02} verifies the claimed necessary optimality conditions for the stochastic MPEC \eqref{Proble:MPECs}.
\end{proof}\vspace*{-0.1in}

The last result is a direct consequence of Theorem~\ref{nec:Proble:MPECs}.\vspace*{-0.1in}

\begin{corollary} Considering a local optimal solution $(\ox,\oz)$ to \eqref{Proble:MPECs} with $G$ in \eqref{def_mapS} independent on $x$, assume that $\ph$ is locally Lipschitzian around $(\ox,\oz)$ while $\Phi$ is integrably locally Lipschitzian around this point. Denoting $\oy:=\Intfset{\Phi}(\ox,\oz)$, we claim that there exists $y^\ast \in \Y$ such that
\begin{align*}
0&\in \partial \varphi (\ox,\oz)+ \int_T   \partial \langle y^\ast,  \Phi_t \rangle (\ox,\oz) \dmu   +\begin{pmatrix}
N(\ox;\mathcal{C})\\  D^\ast G(\oz,-\oy)(y^\ast)
\end{pmatrix}.
\end{align*}
\end{corollary}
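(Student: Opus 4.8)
The plan is to read this corollary off Theorem~\ref{nec:Proble:MPECs} by activating the three simplifications its extra hypotheses provide: the local Lipschitz continuity of $\ph$, the $x$-independence of $G$, and the (single-valued) integrable local Lipschitz property of $\Phi$. No new variational machinery is needed; the work is to confirm that the hypotheses of Theorem~\ref{nec:Proble:MPECs} remain in force and then to rewrite its conclusion in the specialized form.

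First I would dispatch the qualification condition \eqref{QC:nec:Proble:MPECs}. Because $\ph$ is locally Lipschitzian around $(\ox,\oz)$, the singular subdifferential collapses to $\partial^\infty\ph(\ox,\oz)=\{0\}$ by the characterization recorded right after \eqref{ss}; hence the only admissible pair in \eqref{QC:nec:Proble:MPECs} is $(x^*_\infty,z^*_\infty)=(0,0)$, exactly as the theorem anticipates. Next I would check that the ambient assumptions of Theorem~\ref{Theorem:Parametric:main}, inherited through Theorem~\ref{nec:Proble:MPECs}, are met by the present data: the integrable local Lipschitz property of $\Phi$ furnishes the inner semicompactness of $\mathcal{S}_\Phi$ via part (iv) of Lemma~\ref{main.theorem}, while Theorem~\ref{Prop61}(ii) converts that same property into the integrable quasi-Lipschitz estimate \eqref{Int_Lips_like_inq}.

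With the qualification condition secured, applying Theorem~\ref{nec:Proble:MPECs} delivers $(x^*,z^*)\in\partial\ph(\ox,\oz)$, a multiplier $y^*\in\Y$, a point $\oy\in -G(\ox,\oz)\cap\Intfset{\Phi}(\ox,\oz)$, and a selection $\y\in\mathcal{S}_\Phi(\ox,\oz,\oy)$ realizing its inclusion. The remaining steps are purely structural. Since $G$ is independent of $x$, its coderivative carries a vanishing first component, giving $D^\ast G(\ox,\oz,-\oy)(y^\ast)=\{0\}\times D^\ast G(\oz,-\oy)(y^\ast)$, which relocates the $G$-term into the second block of the target vector. Since $\Phi_t$ is single-valued at $(\ox,\oz)$, the integral $\Intfset{\Phi}(\ox,\oz)$ reduces to the single point $\oy$, the solution set $\mathcal{S}_\Phi(\ox,\oz,\oy)$ collapses to the single selection $\ooy(t)=\Phi_t(\ox,\oz)$, and the scalarization formula \eqref{scal} rewrites $\int_T D^\ast\Phi_t(\ox,\oz,\y(t))(y^\ast)\dmu$ as $\int_T\partial\langle y^\ast,\Phi_t\rangle(\ox,\oz)\dmu$. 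Substituting these identifications and absorbing the pair $(x^*,z^*)$ into $\partial\ph(\ox,\oz)$ yields the asserted inclusion.

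I expect the only genuine friction to lie in the bridging implications of the second paragraph: one must be sure that the integrable local Lipschitz property of $\Phi$ simultaneously guarantees single-valuedness of $\Phi_t$ at the reference point (legitimizing the scalarization in the target inclusion), discharges the inner-semicompactness hypothesis, and implies the quasi-Lipschitz condition demanded inside Theorem~\ref{Theorem:Parametric:main}. Once those three implications are confirmed, the corollary is a direct transcription of Theorem~\ref{nec:Proble:MPECs}.
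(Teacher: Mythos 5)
Your proposal is correct and follows essentially the same route as the paper's own proof: discharge the qualification condition \eqref{QC:nec:Proble:MPECs} via $\partial^\infty\varphi(\ox,\oz)=\{0\}$ for locally Lipschitzian $\varphi$, apply Theorem~\ref{nec:Proble:MPECs}, and rewrite the coderivative terms via the scalarization formula \eqref{scal} together with the $x$-independence of $G$. Your extra bridging steps (inner semicompactness from Lemma~\ref{main.theorem}(iv) and the quasi-Lipschitz estimate from Theorem~\ref{Prop61}(ii)) and your caution about single-valuedness of $\Phi_t$ are consistent with the paper, which leaves these implicit in the notation $\oy:=\Intfset{\Phi}(\ox,\oz)$.
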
\vspace*{-0.1in}
\begin{proof} Since $\ph$ is locally Lipschitzian around $(\ox,\oz)$, the qualification condition \eqref{QC:nec:Proble:MPECs} holds automatically. Furthermore, the coderivative representation for the integrably Lipschitzian mapping $\Phi$ in \eqref{nec:Proble:MPECs:Eq02} follows from the scalarization formula \eqref{scal}. Combining these facts, we justify our claim.
\end{proof}\vspace*{-0.25in}

\end{document}